\documentclass[a4paper, leqno, 10pt]{amsart}
\usepackage[utf8]{inputenc}
\usepackage[a4paper, left=90pt, right=90pt, bottom=100pt, top=100pt]{geometry}

\usepackage{amsthm}
\usepackage{amssymb}
\usepackage[american]{babel}
\usepackage{exscale}
\usepackage[mathscr]{euscript}
\usepackage{units}
\usepackage{url}
\usepackage[all,ps,tips,tpic]{xy}
\usepackage{graphicx}
\usepackage{color}
\usepackage{stmaryrd}

\usepackage{tikz} 
\usetikzlibrary{matrix,arrows}
\usepackage{tikz-cd}

\usepackage{amscd}

\numberwithin{equation}{section}

\newtheorem{theorem}{Theorem}[section]
\newtheorem{lemma}[theorem]{Lemma}
\newtheorem{corollary}[theorem]{Corollary}
\newtheorem{proposition}[theorem]{Proposition}
\newtheorem{conjecture}[theorem]{Conjecture}

\theoremstyle{definition}
\newtheorem{definition}[theorem]{Definition}

\theoremstyle{remark}
\newtheorem{claim}{Claim}

\newtheorem*{remark}{Remark}

\title[The dimension of affine Deligne-Lusztig varieties for unramified groups]{The dimension of affine Deligne-Lusztig varieties in the affine Grassmannian of unramified groups}
\author[P. Hamacher]{by Paul Hamacher}
\setlength{\parskip}{6pt}

\def \ad {\mathrm{ad}}
\def \dom {\mathrm{dom}}
\def \mmax {\mathrm{max}}

\DeclareMathOperator{\Aut}{Aut}
\DeclareMathOperator{\defect}{def}
\DeclareMathOperator{\diag}{diag}

\DeclareMathOperator{\Int}{Int}
\DeclareMathOperator{\inv}{inv}
\DeclareMathOperator{\Gal}{Gal}
\DeclareMathOperator{\GL}{GL}
\DeclareMathOperator{\Lie}{Lie}
\DeclareMathOperator{\height}{ht}
\DeclareMathOperator{\PGL}{PGL}
\DeclareMathOperator{\suc}{succ}

\DeclareMathOperator{\Res}{Res}
\DeclareMathOperator{\rank}{rk}
\DeclareMathOperator{\val}{val}
\DeclareMathOperator{\vol}{vol}

\renewcommand{\AA}{\mathbb{A}}

\newcommand{\GG}{\mathbb{G}}

\newcommand{\NN}{\mathbb{N}}

\newcommand{\QQ}{\mathbb{Q}}
\newcommand{\RR}{\mathbb{R}}

\newcommand{\ZZ}{\mathbb{Z}}

\newcommand{\calG}{\mathcal{G}}

\newcommand{\calI}{\mathcal{I}}

\newcommand{\calM}{\mathcal{M}}

\newcommand{\calP}{\mathcal{P}}

\newcommand{\calS}{\mathcal{S}}

\newcommand{\calV}{\mathcal{V}}

\newcommand{\fraka}{\mathfrak{a}}

\newcommand{\pot}[1]{ [\hspace{-0,5mm}[ {#1} ]\hspace{-0,5mm}] }
\newcommand{\rpot}[1]{ (\hspace{-0,7mm}( {#1} )\hspace{-0,7mm}) }
\newcommand{\BT}{\mathcal{BT}}
\newcommand{\Grass}{\mathcal{G}\hspace{-0.5mm}r}

\newcommand{\Lra}{\Leftrightarrow}
\newcommand{\teil}[2]{\mbox{{#1}\hspace{0,4mm}({#2}})}
\newcommand{\orbitsum}[1]{\underline{#1}}
\newcommand{\length}[2]{\ell[ {#1} , {#2} ]}
\newcommand{\lengthG}[2]{\ell_G[ {#1}, {#2}]}

\newenvironment{equivarray}{
 
 \begin{displaymath}
 \begin{array}{lc}}{
 \end{array}
 \end{displaymath}}

\begin{document}

 \begin{abstract}
  We calculate the dimension of affine Deligne-Lusztig varieties inside the affine Grassmannian of an arbitrary reductive connected group over a finite field. Our result generalises the dimension formula for split groups, which was determined by the works of G\"ortz. Haines, Kottwitz and Reuman and Viehmann.
 \end{abstract}

\maketitle

 \section{Introduction} 

 Let $k$ be a finite field of characteristic $p$ and let $\overline{k}$ be an algebraic closure of $k$.  We consider a connected reductive group $G$ over $k$. By a theorem of Steinberg, $G$ is quasi-split. Let $k'$ be a finite subfield of $\overline{k}$ such that $G_{k'}$ is split. We fix $S \subset T \subset B \subset G$, where $S$ is a maximal split torus, $T$ a maximal torus which splits over $k'$ and $B$ a Borel subgroup of $G$. Here and in the rest of this article we use the convention that whenever we consider a subgroup of $G$, we automatically assume that it is defined over $k$. We denote $K = G(\overline{k}\pot{t})$ and by $\Grass$ the affine Grassmannian of $G$.

 Denote by $F = k\rpot{t}$, $E = k'\rpot{t}$ and $L = \overline{k}\rpot{t}$ the Laurent series fields. We identify the Galois groups $\Gal(k'/k) = \Gal(E/F) =:I$.  Let $\sigma$ denote the Frobenius element of $\Gal (\overline{k}/k)$ and also of $\Aut_F (L)$.

 For a dominant cocharacter $\mu\in X_* (T)_{\dom}$ and $b \in G(L)$ the affine Deligne-Lusztig variety is the locally closed subset
 \[
  X_\mu (b)(\overline{k}) = \{ g\cdot K \in \Grass (\overline{k}); g^{-1} b \sigma(g) \in K \mu(t) K\}.
 \]
 We equip $X_\mu (b)$ with reduced structure, making it a scheme which is locally of finite type over $\overline{k}$.  

 It is an important fact that the isomorphism class of $X_\mu(b)$ does not depend on $b$ itself but only on its $\sigma$-conjugacy class in $G(L)$. Indeed, if $b'= h^{-1}b\sigma(h)$ then $g\cdot K \rightarrow h^{-1}g \cdot K$ induces an isomorphism $X_\mu (b) \stackrel{\sim}{\rightarrow} X_\mu(b')$. 
 
 Denote by $\pi_1(G)$ the fundamental group of $G$, i.e.\ the quotient of $X_*(T)$ by the coroot lattice. By a result of Kottwitz (\cite{kottwitz85}), a $\sigma$-conjugacy class inside $G(L)$ is uniquely given by two invariants $\nu \in X_*(S)_{\QQ,\dom}$ and $\kappa \in \pi_1 (G)_I$, which are called the Newton point and the Kottwitz point of the $\sigma$-conjugacy class. We will also speak of the Newton point and the Kottwitz point of an element $b \in G(L)$ meaning the invariant associated to the $\sigma$-conjugacy class of $b$.

 We denote by $J_b$ the algebraic group whose $R$-valued points for any $F$-algebra $R$ are given by
 \[
  J_b(R) = \{g\in G(R \otimes_F L); g^{-1}b\sigma(g) = b \},
 \]
 which is an inner form of the centralizer of the Newton point of $b$ in $G_F$ (\cite{kottwitz85}, \S 5.2). Then $J_b (F)$ acts on $X_\mu (b)$ by multiplication on the left. We define the defect of $b$ to be the integer $\defect_G (b) := \rank_F G - \rank_F J_b$.

 The aim of this paper is the following theorem:

 \begin{theorem} \label{thm main}
  Assume that $X_\mu(b)$ is nonempty. Let $\nu \in X_* (S)_{\QQ,\dom}$ be the Newton point of $b$. Then
  \[
   \dim X_\mu (b) = \langle \rho, \mu - \nu \rangle - \frac{1}{2}\defect_G (b),
  \]
  where $\rho$ denotes the half-sum of all (absolute) roots of $G$.
 \end{theorem}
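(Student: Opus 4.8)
The strategy is to establish the upper and lower bounds for $\dim X_\mu(b)$ separately, after reducing, by means of the Hodge--Newton decomposition, to a core situation. Since both $X_\mu(b)$ and the right-hand side of the asserted equality behave predictably under central isogenies and under passage to the adjoint group, we may and do assume that $G$ is semisimple and adjoint. Recall also that the hypothesis $X_\mu(b)\neq\emptyset$ is equivalent to $\nu\leq\mu$ together with $\kappa(b)$ being the image of $\mu$ in $\pi_1(G)_I$; this consistency is used tacitly below.

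\emph{Reduction via Hodge--Newton.} Let $M=M_\nu$ be the centralizer of $\nu$, a standard Levi subgroup defined over $F$, in which $b$ is basic. If $M\subsetneq G$ and the images of $\mu$ and $\nu$ coincide in $\pi_1(M)_I$, then $b$ may be chosen in $M(L)$ and the Hodge--Newton decomposition identifies $X^G_\mu(b)$ with a finite disjoint union of copies of $X^M_\mu(b)$. As $\mu-\nu$ lies in the $\QQ$-span of the coroots of $M$, which is annihilated by the $I$-invariant functional $\langle\rho^M,-\rangle$, where $\rho^M$ is the half-sum of the roots outside $M$, one obtains $\langle\rho,\mu-\nu\rangle=\langle\rho_M,\mu-\nu\rangle$ with $\rho_M$ the half-sum of the roots of $M$; and since $S$ is still a maximal split torus of $M$ and $J_b$ is unchanged, $\defect_G(b)=\defect_M(b)$. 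Hence the formula for $(G,\mu,b)$ follows from that for $(M,\mu,b)$ by induction on $\dim G$. We are thus reduced to the case where $(\mu,b)$ is Hodge--Newton irreducible, i.e.\ either $b$ is basic ($M_\nu=G$), or $M_\nu\subsetneq G$ but $\mu$ and $\nu$ have distinct images in $\pi_1(M_\nu)_I$.

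\emph{Upper bound.} To prove $\dim X_\mu(b)\leq\langle\rho,\mu-\nu\rangle-\frac{1}{2}\defect_G(b)$ I would argue through semi-infinite orbits. Fix an opposite Borel $\overline B$ with unipotent radical $\overline N$, and use the retractions $\Grass\to X_*(T)_I$ attached to the Iwasawa decompositions with respect to $N(L)$ and $\overline N(L)$; then stratify $X_\mu(b)$ by the values of these two retractions at a point and at its $\sigma$-conjugate. On each stratum the task reduces to bounding the dimension of a single intersection of an $N(L)$-orbit with $K\mu(t)K$, which the Mirkovi\'c--Vilonen-type estimate controls by an expression of the form $\langle\rho,\mu+\lambda\rangle$; the Frobenius-conjugation condition, the retractions being fixed, then constrains $\lambda$ tightly enough that the maximum over the finitely many occurring strata is exactly $\langle\rho,\mu-\nu\rangle-\frac{1}{2}\defect_G(b)$. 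For split $G$ this is the argument of G\"ortz--Haines--Kottwitz--Reuman; the new point for quasi-split $G$ is to carry the Galois action on $X_*(T)$, and the ensuing passage to $I$-coinvariants, through all the estimates.

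\emph{Lower bound, and the main obstacle.} This is the heart of the matter: for Hodge--Newton irreducible $(\mu,b)$ one must exhibit a locally closed subvariety of $X_\mu(b)$ of dimension $\langle\rho,\mu-\nu\rangle-\frac{1}{2}\defect_G(b)$. Following Viehmann's treatment of $\GL_n$, one picks a convenient representative of the $\sigma$-conjugacy class of $b$, parametrises an open piece of $X_\mu(b)$ by combinatorial data analogous to Viehmann's extended semi-modules recording the relative-position information, and realises the corresponding locally closed set as an iterated fibration whose fibres are affine spaces over suitable base varieties --- the latter expected to be classical Deligne--Lusztig varieties for the reductive quotients of the pertinent parahorics once $G$ is not split. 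A dimension count then extracts the main term $\langle\rho,\mu-\nu\rangle$ from the affine part and the correction $-\frac{1}{2}\defect_G(b)$ from the rest, and one verifies that this value is attained. The genuinely hard step is exactly this last one for a non-split quasi-split $G$: one must set up the parametrisation of top-dimensional components when $T$ is not split, understand how the Frobenius twist interacts with the Iwahori--Bruhat combinatorics of $\Grass$, and confirm that the dimensions of the Deligne--Lusztig factors --- equivalently, the refined combinatorics of $I$-orbits --- account precisely for the $-\frac{1}{2}\defect_G(b)$ term. By comparison, the reduction steps and the upper bound are adaptations of established arguments, the only real care being the systematic bookkeeping with $I$-coinvariant lattice data and the pairing with the half-sum of the absolute roots.
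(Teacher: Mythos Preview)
Your outline has a genuine structural gap: the Hodge--Newton reduction is much too weak for what follows. It only applies when $\mu$ and $\nu$ have the same image in $\pi_1(M_\nu)_I$, and after it you are still left with the entire class of Hodge--Newton irreducible pairs --- in particular, with \emph{all} basic $b$ in \emph{all} adjoint groups $G$. You have not reduced to anything tractable. The paper (following \cite{GHKR06}) uses a much stronger tool: the full parabolic correspondence $\Grass_M \leftarrow \Grass_P \rightarrow \Grass$ for an \emph{arbitrary} standard Levi $M$ containing $b$, which yields the exact formula
\[
 \dim X_\mu(b) = \sup_{\mu_M \in S_M(\mu,\kappa)} \big(\dim X_{\mu_M}^M(b) + d(\mu,\mu_M)\big) + \langle \rho,\nu-\nu_{\dom}\rangle - \langle 2\rho_N,\nu\rangle.
\]
Combined with the Mirkovi\'c--Vilonen estimate for $d(\mu,\mu_M)$ (with equality on $\Sigma(\mu)_{M-\mmax}$), this is a \emph{two-sided} reduction: it transfers the full dimension formula from $M$ to $G$, not just one inequality. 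Choosing $M$ so that $b$ is superbasic in $M$, and invoking the classification of superbasic $\sigma$-conjugacy classes in adjoint groups (\cite{CKV}), one lands in $G=\Res_{k'/k}\GL_h$ with $b$ superbasic. Your Hodge--Newton step is the degenerate special case $\mu_M=\mu$ of this machinery and cannot substitute for it.

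Two further points. First, what you call ``the argument of G\"ortz--Haines--Kottwitz--Reuman'' for the upper bound is not an independent semi-infinite-orbit estimate on $X_\mu(b)$ for general $G$; GHKR obtain both bounds simultaneously via the Levi reduction above, and the paper does the same. Your sketch (``the maximum over the finitely many occurring strata is exactly $\langle\rho,\mu-\nu\rangle-\tfrac12\defect_G(b)$'') hides the entire difficulty and is not something one can read off from MV bounds alone. Second, your lower-bound picture is off target: once one is in $\Res_{k'/k}\GL_h$ with $b$ superbasic, the paper decomposes $X_\mu(b)^0$ into locally closed pieces $\calS_{A,\varphi}$ indexed by \emph{extended EL-charts} (a genuine generalisation of Viehmann's extended semi-modules incorporating the $I$-grading), and each piece is the image of an open subset of an affine space $\AA^{|\calV(A,\varphi)|}$ --- no classical Deligne--Lusztig varieties appear. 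Both the upper and the lower bound are then purely combinatorial statements about $|\calV(A,\varphi)|$, and the defect term emerges from an explicit lattice-point count, not from parahoric reductive quotients.
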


 It is known that $X_\mu (b)$ is non-empty if and only if the Mazur inequality holds.  Many authors have worked on this non-emptiness and have proven it in different generality, the result for unramified groups was proven by Kottwitz and Gashi (\cite{kottwitz03}, \S 4.3; \cite{gashi10} Thm.~5.2).

 The assertion of Theorem \ref{thm main} is already known in the case where $G$ is split. It is proven in the papers of G\"ortz, Haines, Kottwitz and Reuman \cite{GHKR06} and Viehmann \cite{viehmann06}. In \cite{GHKR06} the assertion is reduced to the case where $G = \GL_h$ and $b$ is superbasic, i.e.\ no $\sigma$-conjugate of $b$ is contained in a proper Levi subgroup of $G$. This case is considered in \cite{viehmann06}, where the dimension is calculated.

 The proof of Theorem \ref{thm main} is a generalization of the proof in the split case. In section \ref{sect reduction to superbasic} we reduce the theorem to the case where $G = \Res_{k'/k} GL_h$ and $b$ is superbasic. The reduction step is almost literally the same as in \cite{GHKR06}, we give an outline of the proof and explain how one has to modify the proof of \cite{GHKR06}. The rest of the paper then focuses on proving the theorem in this special case. For this we generalize the proof of Viehmann in \cite{viehmann06}. We decompose the affine Deligne-Lusztig variety using combinatorial invariants called extended EL-charts, which generalize the notion of extended semi-modules considered in \cite{viehmann06} for $G = GL_h$, and calculate the dimension of each part by generalising the computations in the $GL_h$-case. As another application of this decomposition we study the $J_b(F)$-action on the irreducible components of $X_\mu(b)$ in the superbasic case and give a conjecture on the number of orbits in the case where $\mu$ is minuscule. 

 \textbf{Acknowledgements:} I am grateful to my advisor Eva Viehmann for introducing me to this subject and for her interest and advice. Further I thank Timo Richarz for his explanations about the geometry of the affine Grassmannian and Miaofen Chen for many discussions which helped me in understanding affine Deligne-Lusztig varieties. The author was partially supported by ERC starting grant 277889 ``Moduli spaces of local $G$-shtukas''.

 \section{Reduction to the superbasic case} \label{sect reduction to superbasic}
 
 The aim of this section is to prove the following assertion.

 \begin{theorem} \label{thm reduction to superbasic}
  Assume Theorem \ref{thm main} is true for each affine Deligne-Lusztig variety $X_\mu(b)$ with $G \cong \Res_{k'/k} \GL_h$ and $b \in G(L)$ superbasic. Then it is true in general.
 \end{theorem}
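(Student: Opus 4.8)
The plan is to follow the chain of reductions used by G\"ortz--Haines--Kottwitz--Reuman in the split case \cite{GHKR06}, which --- as explained in the introduction --- carries over almost verbatim to the quasi-split setting; I describe the chain and indicate where the modifications occur. So let $(\mu,b)$ be given with $X_\mu(b)\neq\emptyset$, and write $\nu=\nu_b$ for the dominant Newton point. First I would choose a Levi subgroup $M\subseteq G$ minimal among those for which some $\sigma$-conjugate $b_M$ of $b$ lies in $M(L)$; since Levi subgroups over $F$ are $G(F)$-conjugate to standard ones, $M$ may be taken to contain a maximal split torus of $G$, so that $\rank_F M=\rank_F G$. A Levi subgroup of a Levi subgroup of $G$ is again a Levi subgroup of $G$, so minimality forces $b_M$ to be superbasic in $M$; in particular the Newton point of $b_M$ in $M$ is central in $M$, hence $J_b\cong J_{b_M}$ is an inner form of $M$ and
\[
  \defect_M(b_M)=\rank_F M-\rank_F J_{b_M}=\rank_F G-\rank_F J_b=\defect_G(b).
\]
Since $X_\mu(b)\cong X_\mu(b_M)$, I replace $b$ by $b_M$.

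The second step is to reduce the group $M$ to Weil restrictions of general linear groups. A reductive group admits a superbasic element only when its adjoint group is a product of Weil restrictions of projective linear groups, so $M^{\ad}\cong\prod_i\Res_{k_i/k}\PGL_{n_i}$ with each $k_i\subseteq\overline{k}$ finite over $k$, and the image of $b_M$ in each factor is superbasic. The formula of Theorem \ref{thm main} and the non-emptiness criterion are both compatible with the usual operations on the group --- passage to the derived group, the simply connected cover or the adjoint group, splitting off the connected centre, Weil restriction, and direct products --- under which $\langle\rho,\mu-\nu\rangle$, $\defect_G(b)$ and $\dim X_\mu(b)$ transform in the obvious way: $\langle\rho,-\rangle$ factors through $X_*(T_{\ad})$, the pairing and the defect are additive over products, and the defect is insensitive to central isogenies because the relevant central tori agree. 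Unwinding these compatibilities reduces Theorem \ref{thm main} for all $X^M_{\mu'}(b_M)$, $\mu'$ an $M$-dominant cocharacter, to the case $G\cong\Res_{k_i/k}\GL_{n_i}$ with superbasic element, which is precisely the hypothesis of the theorem (applied with $k_i$ in place of $k'$). So Theorem \ref{thm main} becomes known for every affine Deligne--Lusztig variety of $M$.

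The last step is the Levi reduction, passing from the $X^M_{\mu'}(b_M)$ back to $X_\mu(b)$. Following \cite{GHKR06}, one stratifies $X_\mu(b)$ into locally closed pieces indexed by $W/W_M$ by means of the Iwasawa decomposition attached to a parabolic $P=MN$ with Levi factor $M$, and analyses, piece by piece, the retraction along $N$ onto $\Grass_M$. This should give a dimension comparison of the shape
\[
  \dim X_\mu(b)=\max_i\bigl(\dim X^M_{\mu_i}(b_M)+c_i\bigr),
\]
where the $\mu_i$ are certain $M$-dominant cocharacters $\preceq\mu$ with $X^M_{\mu_i}(b_M)\neq\emptyset$ and the $c_i$ are explicit constants depending only on $\mu$, $\mu_i$, $\nu$ and the root datum. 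Substituting the now-available formula of Theorem \ref{thm main} for each $\dim X^M_{\mu_i}(b_M)$ and using $\defect_M(b_M)=\defect_G(b)$, a direct computation with roots and coweights should show that every term of the maximum equals $\langle\rho,\mu-\nu\rangle-\frac{1}{2}\defect_G(b)$, which is the assertion of Theorem \ref{thm main} for $X_\mu(b)$.

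I expect the main obstacle to be exactly this last step. The first two steps are formal, given the structure theory of Levi subgroups over $F$, the classification of reductive groups admitting superbasic elements, and the listed compatibilities of the dimension formula. The substance is the geometric argument of \cite{GHKR06}, which there is carried out for split $G$; the issue in our setting is that $G$ is only quasi-split. But the Frobenius $\sigma$ merely permutes the (possibly non-split) simple factors of $G$ and otherwise leaves untouched the stratification, the computation of the fibre dimensions of the $N$-retraction, and the concluding combinatorial identity, so the remaining work is to transport the construction of \cite{GHKR06} through this Galois action and to verify at each step that it still goes through.
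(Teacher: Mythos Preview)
Your plan matches the paper's: reduce to a Levi $M$ in which $b$ is superbasic, pass to the adjoint group to identify $M^{\ad}$ as a product of Weil restrictions of $\PGL_h$'s, and then run the parabolic reduction of \cite{GHKR06}. The place where your sketch diverges from what actually happens is the final computation. The constants $c_i$ in your dimension comparison are \emph{not} purely combinatorial: up to a term depending only on $\nu$, they equal $d(\mu,\mu_M)=\dim\bigl(N(L)x_{\mu_M}\cap Kx_\mu\bigr)$, and for these Mirkovi\'c--Vilonen--type intersections one has only the inequality
\[
  d(\mu,\mu_M)\leq\langle\rho,\mu+\mu_M\rangle-2\langle\rho_M,\mu_M\rangle
\]
(Lemma~\ref{lem GHKR2}). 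Consequently it is not true that every term of the supremum equals $\langle\rho,\mu-\nu\rangle-\tfrac12\defect_G(b)$; the terms are only bounded above by this quantity. What makes the argument go through is that equality in the estimate above holds whenever $\mu_M\in\Sigma(\mu)_{M-\mmax}$, and such $\mu_M$ with the correct Kottwitz point always exist in $S_M(\mu,\kappa)$, so the supremum is both $\leq$ and $\geq$ the desired value. The ingredient you are missing, then, is not a Galois-equivariance check but the MV-dimension estimate and its sharpness on the $M$-maximal locus; both are statements about the split group $G_{k'}$ and transfer unchanged.
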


 As mentioned in the introduction, we follow the proof given in \cite{GHKR06} for split groups. First we have to fix some more notation. Let

 \begin{tabular}{rl}
  $P = MN$ & be a parabolic subgroup of $G$ containing $B$. We denote by $M$ the  correspon- \\&ding Levi subgroup containing $T$ and by $N$ the unipotent radical of $P$. \\
  $\Grass,\Grass_P,\Grass_M$ & denote the affine Grassmannians of $G,P$ and  $M$  respectively. \\
  $\Grass^\omega, \Grass_M^\omega $ & denote the geometric connected component of $\Grass$ resp.\ $\Grass_M$ corresponding \\& to  $\omega \in \pi_1 (G)$ resp.\ $\omega \in \pi_1(M)$. (cf.\ \cite{PR08} Thm.\ 0.1) \\
  $x_\lambda$ & denote the image of $\lambda(t)$ in $\Grass(\overline{k})$ for $\lambda \in X_*(T)$. We use $x_0$ as ``base point'' \\& of $\Grass(\overline{k})$ . For $g \in G(L)$ we write $gx_0$ for the translate of $x_0$  w.r.t.\  the obvious \\& $G(L)$-action on $\Grass(\overline{k})$.  \\
  $X_* (T)_{\dom}$ & be the subset of $X_*(T)$ of cocharacters which are dominant w.r.t. \\& $T\subset B \subset G$.  \\
  $X_* (T)_{M-\dom}$ &  be the subset of $X_*(T)$ of cocharacters which are dominant w.r.t. \\& $T\subset B\cap M \subset M$. \\
  $R_N$ & denote the set of roots of $T$ in $\Lie N$. \\
  $\rho$ & denote the half-sum of all positive roots in $G$.  \\
  $\rho_N$ & denote the half-sum of all elements of $R_N$. \\
  $\rho_M = \rho - \rho_N$ & denote the half-sum of all positive roots in $M$.
 \end{tabular}

 Moreover, we define two partial orders on $X_*(T)$. For two cocharacters $\mu,\mu'$ we write $\mu \leq \mu'$ resp.\ $\mu \leq_M \mu'$ if $\mu'-\mu$ is a non-negative integral linear combination of simple positive coroots of $G$ resp.\ $M$.
 
 We consider $\Grass_M(\overline{k})$ as subset of $\Grass(\overline{k})$ via the obvious embedding. Furthermore the canonical morphisms $P \hookrightarrow G$ and $P \rightarrow M$ induce morphisms of ind-schemes
 \begin{center}
  \begin{tikzcd}
   {} & \Grass_P \arrow{ld}[swap]{\pi} \arrow{rd}{\iota}
   & \\
   \Grass_M
   &
   & \Grass_G.
  \end{tikzcd}
 \end{center}
 The idea of the proof for Theorem \ref{thm reduction to superbasic} is to consider the image of an affine Deligne-Lusztig variety $X_\mu (b)$ in $\Grass_M$ under the above correspondence, assuming that $b \in M(L)$. We want to show that the image is a union of affine Deligne-Lusztig varieties, which we will later assume to be superbasic and relate the dimension of $X_\mu(b)$ to the dimension of its image.

 Let us study the diagram more thoroughly. Certainly $\pi$ is surjective and $\iota$ is bijective on geometric points by the Iwasawa decomposition of $G$. Now Lemma \ref{lem embedding grassmannian} implies that $\iota$ identifies $\Grass_P$ with a coproduct of locally closed subsets of $\Grass$, which are disjoint and cover $\Grass$. In particular we see that $X^{P\subset G}_\mu(b) := \iota^{-1}(X_\mu(b))$ is also locally of finite type and has the same dimension as $X_\mu(b)$.

 \begin{lemma} \label{lem embedding grassmannian}
  Let $i: I \hookrightarrow H$ be a closed embedding of connected algebraic groups. Then the induced map on the identity components of the affine Grassmannians $i_\calG:\Grass_I^0 \rightarrow \Grass_H^0$ is an immersion.
 \end{lemma}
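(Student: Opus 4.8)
The statement to prove is Lemma~\ref{lem embedding grassmannian}: for a closed embedding $i\colon I \hookrightarrow H$ of connected algebraic groups, the induced map $i_\calG\colon \Grass_I^0 \to \Grass_H^0$ on identity components of the affine Grassmannians is an immersion. Let me sketch how I would approach this.

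\textbf{Proof plan.} The idea is to reduce to $H=\GL_n$ and then to read off the image of the affine Grassmannian from the loop group picture.

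\textbf{Step 1 (reduction to $H=\GL_n$).} Choose a faithful representation realising $H$ as a closed subgroup of some $\GL_n$, so that $I\hookrightarrow H\hookrightarrow\GL_n$ are both closed embeddings. Since each of these sends the trivial class of $\pi_1$ to the trivial class, we get $\Grass_I^0\xrightarrow{\,i_\calG\,}\Grass_H^0\to\Grass_{\GL_n}^0$, whose composite is the map induced by $I\hookrightarrow\GL_n$. An immersion is in particular a monomorphism, hence separated, and if $v\circ u$ is an immersion and $v$ is separated then $u$ is an immersion; checking this on compatible presentations of the ind-schemes by ordinary schemes reduces it to the classical scheme-theoretic statement. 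Applying it with $v$ the map $\Grass_H^0\to\Grass_{\GL_n}^0$ and $u=i_\calG$, we are reduced to proving the lemma for $H=\GL_n$ and an arbitrary closed subgroup $I\subseteq\GL_n$.

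\textbf{Step 2 (the image as a locally closed subspace).} Write $LH=H(\overline k\rpot{t})$, $L^+H=H(\overline k\pot{t})$, so that $\Grass_H=LH/L^+H$ as fppf sheaves and $(L\GL_n)^0:=\{g:\val(\det g)=0\}\to\Grass_{\GL_n}^0$ is a Zariski-locally trivial $L^+\GL_n$-torsor. Because $I\subseteq\GL_n$ is closed, $LI\subseteq L\GL_n$ and $L^+I\subseteq L^+\GL_n$ are closed sub-ind-schemes and one has $LI\cap L^+\GL_n=L^+I$ inside $L\GL_n$; in particular $i_\calG$ is a monomorphism. Its image, pulled back to the loop group, is $LI\cdot L^+\GL_n$, which is stable under right translation by $L^+\GL_n$. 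From $LI\cap L^+\GL_n=L^+I$ one gets $LI\cdot L^+\GL_n\cong LI\times^{L^+I}L^+\GL_n$; hence, \emph{once we know that $LI\cdot L^+\GL_n$ is locally closed in $L\GL_n$}, it descends along the torsor to a locally closed sub-ind-scheme $Z\subseteq\Grass_{\GL_n}^0$, and the induced map $\Grass_I^0\to Z$ is an isomorphism since both sides are the quotient $\big(LI\cap(L\GL_n)^0\big)\times^{L^+I}L^+\GL_n$ by $L^+\GL_n$. This reduces the lemma to the assertion that $LI\cdot L^+\GL_n$ is a locally closed sub-ind-scheme of $L\GL_n$.

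\textbf{Step 3 (local closedness).} By Chevalley's theorem $I$ is the stabiliser in $\GL_n$ of a line $[\ell]\subseteq V$ for some finite-dimensional representation $V$. Since inversion is an automorphism of $L\GL_n$ it is equivalent to treat $L^+\GL_n\cdot LI$. The $\GL_n$-action on $\mathbb{P}(V)$ gives a morphism $g\mapsto g\cdot[\ell]$ from $L\GL_n$ to the loop space of $\mathbb{P}(V)$, on which $L^+\GL_n$ acts; as $LI$ is exactly the set of loops fixing $[\ell]$, the subset $L^+\GL_n\cdot LI$ is the preimage of the $L^+\GL_n$-orbit of the base point. The orbit of a point under the pro-algebraic group $L^+\GL_n$ acting on a (pro-)scheme is locally closed — this follows, level by level in the congruence-subgroup truncations, from the classical fact that orbits of algebraic groups on varieties are locally closed, together with a standard smoothness/Mittag--Leffler argument to pass to the limit. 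Hence $L^+\GL_n\cdot LI$, and therefore $LI\cdot L^+\GL_n$, is locally closed, which finishes the proof.

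The one genuinely non-formal point is the last assertion of Step 3 (local closedness of the $L^+\GL_n$-orbit); everything else is bookkeeping with the loop-group presentations. As a sanity check: when $I$ is reductive $\Grass_I$ is ind-proper, so the monomorphism of Step 2 is automatically a closed immersion; for non-reductive $I$ — e.g.\ a parabolic, the case needed in this paper — the image is genuinely not closed, so ``immersion'' is the optimal statement.
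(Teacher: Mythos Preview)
Your reduction in Step~1 and the loop-group reformulation in Step~2 are sound, but Step~3 has a genuine gap. Chevalley's theorem realises $I$ as the stabiliser of a \emph{line}, so $\GL_n/I$ is only quasi-projective, and you are then forced to work with the loop space $L\mathbb{P}(V)$, which is not an ind-scheme of ind-finite type. Your appeal to ``orbits of the pro-algebraic group $L^+\GL_n$ are locally closed, by a standard smoothness/Mittag--Leffler argument'' is not a standard fact in this setting and would need substantial work: the truncations of $L^+\GL_n$ act on truncated arc spaces, not on pieces of $L\mathbb{P}(V)$, and the passage to the limit does not obviously preserve local closedness. You yourself flag this as the one non-formal point, and it is indeed where the argument breaks down.

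The paper avoids this difficulty by a different idea. It quotes the Beilinson--Drinfeld result that $\Grass_I\to\Grass_H$ is an immersion whenever $H/I$ is \emph{quasi-affine} (this is the case where Chevalley gives a vector stabiliser, so the orbit embeds in an affine space and the ``extension of sections'' condition becomes transparently locally closed). Since $H/I$ need not be quasi-affine in general, the paper replaces $I$ by the subgroup $I' = I\times_{I_1} I_1^{\mathrm{der}}$, where $I_1$ is the reductive quotient of $I$: then $I'$ has no nontrivial characters, so $H/I'$ is quasi-affine and the BD result applies to $I'\hookrightarrow H$. One then checks that $\Grass_{I'}^0\to\Grass_I^0$ is a \emph{surjective} immersion (using that $I_1 = R(I_1)\cdot I_1^{\mathrm{der}}$ and $\pi_1(I)=\pi_1(I_1)$), and concludes from the factorisation $\Grass_{I'}^0\hookrightarrow\Grass_I^0\to\Grass_H^0$ together with the fact that $i_\calG$ is a monomorphism. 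So the paper's key move is group-theoretic---kill the characters of $I$---rather than the infinite-dimensional orbit argument you attempt.
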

 \begin{proof}
  First recall the following result in the proof of Thm.~4.5.1 of \cite{BD} (see also \cite{goertz10}, Lemma~2.12): In the case where $H/I$ is quasi-affine (resp.\ affine), the induced morphism $\Grass_I \rightarrow \Grass_H$ is an immersion (resp. closed immersion). So we want to replace $I$ by a suitable closed subgroup $I'$ which is small enough such that $H/I'$ is quasi-affine, yet big enough such that the immersion $\Grass_{I'}^0 \hookrightarrow \Grass_I^0$ is surjective.

  Now let
  \begin{center}
   \begin{tikzcd}
    0 \arrow{r}
    & R(I)_u \arrow{r}
    & I \arrow{r}
    & I_1 \arrow{r}
    & 0    
   \end{tikzcd}
  \end{center}
  be the decomposition of $I$ into a unipotent and a reductive group. We denote by $I_1^{\rm der}$ the derived group of $I_1$ and by $R(I_1)$ its radical. As $I_1/I_1^{\rm der}$ is affine, the canonical morphism $\Grass_{I_1^{\rm der}}^0 \rightarrow \Grass_{I_1}^0$ is a closed immersion. Using that $I_1 = R(I_1)\cdot I_1^{\rm der}$, we see that it is also surjective.

  We denote $I' := I \times_{I_1} I_1^{\rm der}$. As $\pi_1(I) = \pi_1(I_1)$, the canonical morphism $\Grass_{I'}^0 \rightarrow \Grass_I^0$ is the pullback of $\Grass_{I_1^{\rm der}}^0 \hookrightarrow \Grass_I^0$ and hence also a surjective immersion. Furthermore $I'$ has no non-trivial homomorphisms to $\GG_m$, hence the quotient $H/I'$ is quasi-affine and $\Grass_{I'}^0 \rightarrow \Grass_H$ is an immersion. Altogether we have
  \begin{center}
   \begin{tikzcd}[column sep = 7.5em]
    \Grass_{I'}^0 \arrow[hook]{r}[swap]{\textnormal{surj. immersion}} \arrow[bend left]{rr}{\textnormal{immersion}}
    & \Grass_I^0 \arrow{r}[swap]{i_\calG \textnormal{ monomorphism}}
    & \Grass_H^0,
   \end{tikzcd}
  \end{center}
  which proves that $i_\calG$ is an immersion. 
 \end{proof}

 In order to the determine the dimension of $X^{P\subset G}_\mu(b)$, we want to calculate the dimension of its fibres under $\pi$ and its image. For this we need a few auxiliary results. We note that the reasoning below still works if we replace $\overline{k}$ by a bigger algebraically closed field.

 We fix a dominant, regular, $\sigma$-stable coweight $\lambda_0 \in X_* (T)$. We denote for $m \in \ZZ$
 \[
  N(m) := \lambda_0(t)^m N(\overline{k}\pot{t}) \lambda_0(t)^{-m}
 \]
 Then we have a chain of inclusions $ \ldots \supset N(-1) \supset N(0) \supset N(1) \supset \ldots$ and moreover $N(L) = \bigcup_{i \in \ZZ} N(i)$. Furthermore, we note that $N(-m)/N(n)$ has a canonical structure of a variety for $m,n > 0$.

 \begin{definition}
  \begin{enumerate}
   \item A subset $Y$ of $N(L)$ is called \emph{admissible} if there exist $m,n > 0$ such that $Y \subset N(-m)$ and it is the preimage of a locally closed subset of $N(-m)/N(n)$ under the canonical projection $N(-m) \twoheadrightarrow N(-m)/N(n)$. For admissible $Y \subset N(L)$ we define the dimension of $Y$ by
   \[
    \dim Y = \dim Y/N(n) - \dim N(0)/N(n).
   \]
   \item A subset $Y$ of $N(L)$ is called \emph{ind-admissible} if $Y \cap N(-m)$ is admissible for every $m>0$. For any ind-admissible $Y \subset N(L)$ we define
   \[
    \dim Y = \sup \dim (Y \cap N(-m)).
   \]
  \end{enumerate}
 \end{definition}

 \begin{lemma} \label{lem GHKR1}
  Let $m \in M(L)$ and $\nu \in X_*(S)_{M-\dom,\QQ}$ be its Newton point. We denote $f_m: N(L) \rightarrow N(L), n \mapsto n^{-1}m\sigma(n)m^{-1}$. Then for any admissible subset $Y$ of $N(L)$ the preimage $f_m^{-1}Y$ is ind-admissible and
  \[
   \dim f_m^{-1}Y - \dim Y = \langle \rho, \nu-\nu_{\dom} \rangle.
  \]
  Moreover, $f_m$ is surjective.
 \end{lemma}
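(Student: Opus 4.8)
The plan is to study the map $f_m$ through its effect on the filtration pieces $N(i)$ and to reduce the dimension count to a computation on the associated graded group. First I would observe that conjugation by $\lambda_0(t)$ and the $\sigma$-action are compatible with the chain $\cdots \supset N(-1) \supset N(0) \supset N(1) \supset \cdots$, so $f_m$ restricts to maps between the $N(i)$'s up to a bounded shift depending on $m$; concretely, there exist integers $a \leq b$ (depending on $m$) such that $f_m(N(i)) \subset N(i+a)$ and $f_m^{-1}(N(j)) \subset N(j-b)$ for all $i,j$. This immediately gives that $f_m^{-1}Y$ is ind-admissible once $Y$ is admissible: intersecting with $N(-m')$ and projecting to a suitable quotient $N(-m')/N(n')$ realizes $f_m^{-1}Y \cap N(-m')$ as a preimage of a locally closed subset.

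Next I would compute the ``Jacobian'' of $f_m$ on the graded pieces. The key point is that $f_m(n) = n^{-1} \cdot (m\sigma(n)m^{-1})$, and passing to the associated graded $\mathrm{gr}\, N(L) = \bigoplus_i N(i)/N(i+1)$, which is an abelian group canonically isomorphic to $\bigoplus_{\alpha \in R_N} \overline{k}\rpot{t}$ graded by powers of $t$, the differential of $f_m$ becomes the operator $\xi \mapsto \mathrm{Ad}(m)\sigma(\xi) - \xi$ acting root-space by root-space. On the $\alpha$-eigenspace this is (a twist of) multiplication by $t^{\langle \alpha, \nu\rangle}$ minus the identity up to lower-order terms — more precisely, after diagonalizing the $\sigma$-linear action via the Newton decomposition of $m$, the leading behavior on the $\alpha$-component is governed by $\langle \alpha, \nu \rangle$. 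Summing $\langle \alpha, \nu \rangle$ over $\alpha \in R_N$ gives $2\langle \rho_N, \nu\rangle$, but because $f_m$ contracts on roots $\alpha$ with $\langle \alpha, \nu\rangle > 0$ and expands on those with $\langle \alpha, \nu \rangle < 0$, the net change in dimension of a preimage is $\sum_{\alpha \in R_N} \langle \alpha, \nu \rangle$ restricted appropriately. The normalization $\dim Y = \dim Y/N(n) - \dim N(0)/N(n)$ is exactly designed so that this bookkeeping produces $\langle \rho, \nu - \nu_{\dom}\rangle$: note $\langle \rho, \nu\rangle = \langle \rho_M, \nu\rangle + \langle \rho_N, \nu\rangle$ and the $\rho_M$-part cancels because $\nu$ is $M$-dominant, so one is left with the contribution measuring the failure of $\nu$ to be $G$-dominant, which is $\langle \rho, \nu - \nu_{\dom}\rangle = -\langle \rho_N, \nu + \nu_{\dom}\rangle$ up to sign conventions on $R_N$ and $\rho_N$.

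For surjectivity of $f_m$: I would argue filtration-step by filtration-step. Given a target $y \in N(L)$, one lies in some $N(-m')$, and one solves $f_m(n) = y$ by successive approximation modulo $N(j)$ for increasing $j$. At each step the obstruction to lifting a solution mod $N(j)$ to a solution mod $N(j+1)$ lies in the cokernel of the graded operator $\mathrm{Ad}(m)\sigma - \mathrm{id}$ on $N(j)/N(j+1)$; on each root space this is multiplication-type operator on $\overline{k}$ (a finite-dimensional vector space over the fixed field), and since $\overline{k}$ is algebraically closed and $\sigma$ is the Frobenius, $\mathrm{Ad}(m)\sigma - \mathrm{id}$ is surjective on these pieces by Lang's theorem (or a direct Artin–Schreier-type argument). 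The process converges $t$-adically because the shift $a$ is bounded, so the infinite product of corrections lies in $N(L) = \bigcup_i N(i)$.

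\textbf{Main obstacle.} The delicate part is making the graded/Jacobian computation rigorous for a \emph{non-abelian} unipotent group: $f_m$ is not a homomorphism, so one cannot literally pass to $\mathrm{gr}\, N$ and argue linearly. The honest approach is an induction on the nilpotency length of $N$ (equivalently, on the descending central series), at each stage handling a central extension by a vector group on which $f_m$ does become affine-linear, and tracking how the dimension shifts add up across the central series; the bookkeeping of which root contributes a contraction versus an expansion — and hence why the answer is $\langle \rho, \nu - \nu_{\dom}\rangle$ rather than $\langle \rho, \nu\rangle$ or $0$ — is where all the care is needed and is precisely the place where $M$-dominance of $\nu$ (as opposed to $G$-dominance) enters.
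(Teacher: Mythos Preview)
Your overall strategy—filter $N$ by normal subgroups, linearize on the successive quotients, apply a Lang-type argument for surjectivity, and sum root contributions—is exactly the argument of \cite{GHKR06}, Prop.~5.3.2, to which the paper simply defers; the ``main obstacle'' you flag (induction along the central series) is indeed the content of that proposition. Two points, however, are genuine gaps.

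First, your root-by-root analysis of $\mathrm{Ad}(m)\circ\sigma - \mathrm{id}$ fails as written in the non-split situation: $\sigma$ permutes the absolute root spaces, so on the $\alpha$-eigenspace the operator $\mathrm{Ad}(m)\sigma$ lands in the $\sigma(\alpha)$-eigenspace, and a single root space carries no $\sigma$-linear endomorphism to which Lang applies. The fix is to run the induction along a $\sigma$-stable filtration of $N$ (grouping roots by height, or by $\sigma$-orbit), and this is precisely the one modification the paper singles out: ``$R_N$ is $\sigma$-stable and thus the sets $N[i]$ defined in the proof of Prop.~5.3.2 in \cite{GHKR06} are $\sigma$-stable.'' Without this observation neither the surjectivity nor the dimension count goes through.

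Second, the identity you write, $\langle \rho, \nu - \nu_{\dom}\rangle = -\langle \rho_N, \nu + \nu_{\dom}\rangle$, is false even up to sign (take $G=\GL_2$, $M=T$, $\nu$ anti-dominant: the left side is $-1$, the right side $0$), and so is the claim that ``the $\rho_M$-part cancels because $\nu$ is $M$-dominant'' (take $G=\GL_3$, $M=\GL_2\times\GL_1$, $\nu=(0,2,1)$: then $\langle\rho_M,\nu\rangle=1\neq\tfrac12=\langle\rho_M,\nu_{\dom}\rangle$). The identity that actually emerges from the root computation is
\[
 \langle \rho, \nu - \nu_{\dom}\rangle \;=\; \sum_{\alpha \in R_N:\ \langle\alpha,\nu\rangle < 0} \langle \alpha, \nu\rangle,
\]
obtained from $\langle\rho,\nu_{\dom}\rangle = \tfrac{1}{2}\sum_{\alpha>0}|\langle\alpha,\nu\rangle|$ together with the fact that positive roots of $M$ pair non-negatively with the $M$-dominant $\nu$; this is where $M$-dominance enters, not through a cancellation of $\rho_M$.
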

 \begin{proof}
  This assertion is the analogue of Prop.\ 5.3.1 in \cite{GHKR06}. Note that $R_N$ is $\sigma$-stable and thus the sets $N[i]$ defined in the proof of Prop.\ 5.3.2 in \cite{GHKR06} are $\sigma$-stable.
 \end{proof}

 We denote by $p_M: X_*(T) \twoheadrightarrow \pi_1(M)$ the canonical projection.

 \begin{definition}
  \begin{enumerate}
   \item For $\mu \in X_* (T)_{\dom}$ let
   \[
    S_M(\mu) := \{ \mu_M \in X_*(T)_{M-\dom};\, N(L) x_{\mu_M} \cap Kx_{\mu} \not= \emptyset\}.
   \]
   \item For $\mu \in X_* (T)_{\dom}$, $\kappa \in \pi_1(M)_I$ let
   \[
    S_M(\mu,\kappa) := \{ \mu_M \in S_M(\mu);\, \textnormal{the image of } p_M(\mu_M) \textnormal{ in } \pi_1(M)_I \textnormal{ is } \kappa\}
   \]
   \item For $\mu \in X_* (T)_{\dom}$ let
   \begin{eqnarray*}
    \Sigma(\mu) &:=& \{\mu' \in X_*(T);\, \mu'_{\dom} \leq \mu\} \\
    \Sigma(\mu)_{M-\dom} &:=& \Sigma(\mu) \cap X_*(T)_{M-\dom}
   \end{eqnarray*}
   We denote by $\Sigma(\mu)_{M-\mmax}$ the set of maximal elements in $\Sigma(\mu)_{M-\dom}$ w.r.t.\ the order $\leq_M$.
  \end{enumerate}
 \end{definition}

 \begin{lemma}
  For any $\mu \in X_*(T)_{\dom}$ we have inclusions
  \[
   \Sigma(\mu)_{M-\mmax} \subset S_M(\mu) \subset \Sigma(\mu)_{M-\dom}.
  \]
  Moreover these sets have the same image in $\pi_1(M)_I$. In particular, $S_M(\mu,\kappa)$ is nonempty if and only if $\kappa$ lies in the image of $\Sigma(\mu)_{M-\dom}$.
 \end{lemma}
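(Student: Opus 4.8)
The statement has two parts: the chain of inclusions $\Sigma(\mu)_{M-\mmax} \subset S_M(\mu) \subset \Sigma(\mu)_{M-\dom}$, and the claim that all three sets have the same image in $\pi_1(M)_I$. The plan is to treat these in turn.

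For the right inclusion $S_M(\mu) \subset \Sigma(\mu)_{M-\dom}$: by definition of $S_M(\mu)$, every element $\mu_M$ is $M$-dominant, so it suffices to show $(\mu_M)_{\dom} \leq \mu$. If $N(L)x_{\mu_M} \cap Kx_\mu \neq \emptyset$, then $x_{\mu_M}$ lies in the $K$-orbit closure of $x_\mu$ (since the $K$-orbit of $x_\mu$ corresponds to the affine Schubert cell and its translates under the pro-unipotent group $N(L)$ stay in bounded orbits); concretely, $n x_{\mu_M} \in Kx_\mu$ for some $n \in N(L)$ forces the $K$-orbit of $x_{\mu_M}$ to meet $\overline{Kx_\mu}$, hence $(\mu_M)_{\dom} \leq \mu$ by the standard description of the closure relations in the affine Grassmannian (the $\mu$-admissible set). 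This is essentially the same argument as in \cite{GHKR06}, 5.3 and \cite{viehmann06}.

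For the left inclusion $\Sigma(\mu)_{M-\mmax} \subset S_M(\mu)$: given $\mu_M \in \Sigma(\mu)_{M-\dom}$ which is maximal with respect to $\leq_M$, I must produce a point of $N(L)x_{\mu_M} \cap Kx_\mu$. The idea is to use the Iwasawa decomposition relative to $P$: since $(\mu_M)_{\dom} \leq \mu$, the point $x_{\mu_M}$ lies in $\overline{Kx_\mu}$, and by the Iwasawa/Bruhat stratification of this Schubert variety along $N(L)$-orbits (equivalently, along $\Grass_M$-components), the stratum containing $x_{\mu_M}$ is nonempty precisely when $\mu_M$ cannot be moved further up within its $M$-coweight lattice coset while staying $\leq \mu$ — this is the maximality condition. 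More precisely, one shows that for $M$-dominant $\mu_M$, the intersection $N(L)x_{\mu_M} \cap Kx_\mu$ is nonempty iff $\mu_M$ is a maximal element of $S_M(\mu)$ in a suitable sense, and maximality in $\Sigma(\mu)_{M-\dom}$ implies maximality in $S_M(\mu)$; this uses that $N(L) x_{\mu_M}$ meeting $Kx_\mu$ is a ``downward-closed'' condition within each $M$-orbit but the obstruction to nonemptiness is resolved at the top. I expect this to be the main technical point, and it parallels \cite{GHKR06} Lemma~5.4.2 (or the analogous statement in \cite{viehmann06}).

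For the equality of images in $\pi_1(M)_I$: since $\Sigma(\mu)_{M-\mmax} \subset S_M(\mu) \subset \Sigma(\mu)_{M-\dom}$, it suffices to show that $\Sigma(\mu)_{M-\mmax}$ and $\Sigma(\mu)_{M-\dom}$ have the same image under $p_M$ composed with the projection to $\pi_1(M)_I$. But every element of $\Sigma(\mu)_{M-\dom}$ is $\leq_M$ some maximal element, and adding a simple coroot of $M$ does not change the class in $\pi_1(M)$ (the coroot lattice of $M$ maps to zero in $\pi_1(M)$); hence the whole $\leq_M$-chain above a given $\mu_M$ has constant image in $\pi_1(M)$, and a fortiori in $\pi_1(M)_I$. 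Therefore $p_M(\Sigma(\mu)_{M-\dom}) = p_M(\Sigma(\mu)_{M-\mmax})$ in $\pi_1(M)_I$, giving the equality of images for all three sets. The final sentence about $S_M(\mu,\kappa)$ being nonempty iff $\kappa \in p_M(\Sigma(\mu)_{M-\dom})_I$ is then immediate from the definition of $S_M(\mu,\kappa)$ as the fiber of this image map.
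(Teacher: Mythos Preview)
Your outline is correct and matches the paper's approach: the paper's entire proof is the one-line citation of \cite{GHKR06}~Lemma~5.4.1 applied to the split group $G_{k'}$, and your sketch of the two inclusions together with the observation that $\leq_M$-comparable elements have the same image in $\pi_1(M)$ (hence in $\pi_1(M)_I$) is precisely the content of that reference. The only point you leave implicit is that the objects $N(L)$, $K$, $S_M(\mu)$, $\Sigma(\mu)$ are all defined over $L$, where $G$ is already split, so the split-group result applies verbatim.
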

 \begin{proof}
  This is (a slightly weaker version of) Lemma 5.4.1 of \cite{GHKR06} applied to $G_{k'}$. 
 \end{proof}

 \begin{definition}
  Let $\mu\in X_*(T)_{\dom}$ and $\mu_M \in \Sigma(\mu)$. We write
  \[
   d(\mu,\mu_M) := \dim (N(L)x_{\mu_M} \cap Kx_\mu).
  \]
 \end{definition}
 
 We can extend the definition above to arbitrary elements of $\Grass_M(\overline{k})$. Multiplication by an element $k_M \in K_M$ induces an isomorphism $N(L)x_{\mu_M} \cap Kx_\mu \stackrel{\sim}{\longrightarrow} N(L) k_M x_{\mu_M} \cap K x_{\mu}$, thus we have for each $m \in K_M \mu_M(t) K_M$
 \[
  \dim (N(L)mx_0 \cap Kx_\mu) = d(\mu,\mu_M).
 \]

 \begin{lemma} \label{lem GHKR2}
  Let $\mu \in X_* (T)_{\dom}$. Then for all $\mu_M \in S_M(\mu)$ we have
  \[
   d(\mu,\mu_M) \leq \langle \rho, \mu+\mu_M \rangle - 2\langle \rho_M, \mu_M\rangle
  \]
  If $\mu_M \in \Sigma(\mu)_{M-\mmax}$ this is an equality.
 \end{lemma}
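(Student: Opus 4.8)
The plan is to reduce the inequality to a statement about intersections of Schubert cells with semi-infinite orbits in the affine Grassmannian of $G_{k'}$ — i.e.\ to the corresponding inequality for split groups — and then invoke the split-group result of \cite{GHKR06}. Concretely, $G_{k'}$ is split, and $N(L)x_{\mu_M} \cap K x_\mu$ is a subvariety of $\Grass_{G_{k'}}$ that is literally of the type studied in Lemma~5.4.2 of \cite{GHKR06}: the intersection of the Iwahori/semi-infinite $N$-orbit through $x_{\mu_M}$ with the $K$-orbit through $x_\mu$. Since $R_N$ is $\sigma$-stable, all the objects in sight descend, but for the dimension estimate itself we only need the geometry over $\overline k$, so the split computation applies verbatim. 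This gives both the upper bound $d(\mu,\mu_M)\le \langle\rho,\mu+\mu_M\rangle - 2\langle\rho_M,\mu_M\rangle$ and the equality statement for $\mu_M\in\Sigma(\mu)_{M-\mmax}$.

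If instead one wants a self-contained argument rather than a citation, I would proceed as follows. First, via the identity $\dim(N(L)mx_0\cap Kx_\mu)=d(\mu,\mu_M)$ established just above, reinterpret $d(\mu,\mu_M)$ using the Mirković--Vilonen theory: the intersection $N(L)x_{\mu_M}\cap \overline{Kx_\mu}$ has dimension $\langle\rho,\mu+\mu_M\rangle$ (the MV-cycle dimension), and one needs to control the contribution of the boundary $\overline{Kx_\mu}\setminus Kx_\mu$. Second, pass to the Levi: the map $\pi\colon \Grass_P\to\Grass_M$ sends the $N(L)$-orbit through $x_{\mu_M}$ to the point $x_{\mu_M}$, and $\pi$ is an affine-space bundle on each piece with fibre dimension computable from $\rho_N$; the factor $-2\langle\rho_M,\mu_M\rangle$ is exactly the correction coming from the difference between the $G$-grading ($\rho$) and the $M$-grading ($\rho_M$) on these orbits, i.e.\ from $2\rho_N = 2\rho - 2\rho_M$. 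Third, for the equality case use that when $\mu_M$ is $M$-maximal in $\Sigma(\mu)$, the orbit $N(L)x_{\mu_M}$ meets $Kx_\mu$ (the \emph{open} Schubert cell, not just its closure) in a dense subset of the MV-cycle, so no dimension is lost to the boundary.

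The main obstacle I anticipate is the equality assertion, not the inequality. The upper bound is soft — it follows from the standard dimension estimate for $N(L)$-orbit intersections with Schubert \emph{closures} together with the semicontinuity of fibre dimension for $\pi$. The equality requires knowing that for $M$-maximal $\mu_M$ the relevant MV-cycle is not swallowed by the boundary strata $Kx_{\mu'}$ with $\mu'<\mu$; in the split case this is the technical heart of Lemma~5.4.1--5.4.2 of \cite{GHKR06}, resting on a careful combinatorial analysis of which $\mu_M$ can appear in $S_M(\mu)$ and with what multiplicity in $\pi_1(M)_I$. Since $R_N$ and the whole combinatorial setup are $\sigma$-stable, that analysis transfers without change, so in the write-up I would simply cite \cite{GHKR06} Lemma~5.4.2, applied to the split group $G_{k'}$, and remark that $\sigma$-stability of $R_N$ guarantees the identification $\dim(N(L)mx_0\cap Kx_\mu)=d(\mu,\mu_M)$ is compatible with all constructions.
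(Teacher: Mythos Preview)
Your approach is correct and matches the paper's proof, which is the one-line citation ``This is Cor.~5.4.4 of \cite{GHKR06} applied to $G_{k'}$.'' The $\sigma$-stability of $R_N$ you invoke is not actually needed for this lemma (it is the relevant point for Lemma~\ref{lem GHKR1}); since $d(\mu,\mu_M)$ is a purely geometric quantity computed over $\overline{k}$, the split-group result for $G_{k'}$ applies directly without any descent argument.
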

 \begin{proof}
  This is Cor.~5.4.4 of \cite{GHKR06} applied to $G_{k'}$.
 \end{proof}

 For $b \in M(L)$, $\mu_M \in X_* (T)_{M-\dom}$ we denote by $X_{\mu_M}^M (b)$ the corresponding affine Deligne-Lusztig variety in the affine Grassmannian of $M$. On the contrary $X_{\mu_M} (b)$ still denotes the affine Deligne-Lusztig variety in $\Grass$, assuming that $\mu_M \in X_*(T)_{\dom}$.
 
 \begin{proposition} \label{prop GHKR3}
  Let $b \in M(L)$ be basic, i.e.\ its Newton point is central in $M$. We denote by $\kappa \in \pi_1 (M)_I$ its Kottwitz point and by $\nu \in X_*(S)_{\QQ,M-\dom}$ its Newton point.
  \begin{enumerate}
   \item The image of $X^{P\subset G}_\mu (b)$ under $\pi$ is contained in
   \[
    \bigcup_{\mu_M\in S_M(\mu,\kappa)} X_{\mu_M}^M (b).
   \]
   Denote by $\beta: X_\mu^{P\subset G}(b) \rightarrow \bigcup_{\mu_M\in S_M(\mu,\kappa)} X_{\mu_M}^M (b)$ the restriction of $\pi$.
   \item For $\mu_M \in S_M(\mu,\kappa)$ and every geometric point $x$ of $X_{\mu_M}^M(b)$ the set $\beta^{-1}(x)$ is nonempty and ind-admissible. We have
   \[
    \dim \beta^{-1} (x) = d(\mu,\mu_{M} ) + \langle \rho, \nu - \nu_{\dom} \rangle - \langle 2\rho_{N} ,\nu\rangle. 
   \]
   \item For all $\mu_M \in S_M(\mu,\kappa)$ the set $\beta^{-1} (X_{\mu_M}^M(b))$ is locally closed in $X_\mu^{P \subset G} (b)$ and
   \[
    \dim \beta^{-1} (X_{\mu_M}^M (b)) = \dim X_{\mu_M}^M (b) + d(\mu,\mu_M) + \langle \rho, \nu- \nu_{\dom}\rangle - \langle 2\rho_N, \nu \rangle.
   \]
   \item If $X_\mu (b)$ is nonempty it has dimension
   \[
    \sup\{\dim X_{\mu_M}^M (b) + d(\mu,\mu_M);\, \mu_M \in S_M(\mu,\kappa)\} + \langle \rho, \nu - \nu_{\dom} \rangle - \langle 2 \rho_N, \nu \rangle.
   \]
  \end{enumerate}
 \end{proposition}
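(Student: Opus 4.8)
The proposition is the (ramified) analogue of Proposition~5.6.1 of \cite{GHKR06}, and the plan is to run that argument, the ramification entering only through the $\sigma$- and $I$-equivariance already built into Lemmas~\ref{lem GHKR1}--\ref{lem GHKR2}. The first step is to set up the local picture. Given a geometric point of $X^{P\subset G}_\mu(b)$, the Iwasawa decomposition lets us pick a representative $g \in P(L)$ of the corresponding point $gK$ of $X_\mu(b)$ and write $g = m\tilde n$ with $m \in M(L)$, $\tilde n \in N(L)$; the point then lies over $x := mx_0 \in \Grass_M$ under $\pi$. Set $b_M := m^{-1}b\sigma(m)$, a $\sigma$-conjugate of $b$ in $M(L)$ (so with Kottwitz point $\kappa$ and Newton point $\nu$), and let $\mu_M \in X_*(T)_{M-\dom}$ be determined by $b_M \in K_M\mu_M(t)K_M$. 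With $f_{b_M}: N(L) \to N(L)$, $n \mapsto n^{-1}b_M\sigma(n)b_M^{-1}$ the map of Lemma~\ref{lem GHKR1}, the normality of $N$ in $P$ gives
\[
 g^{-1}b\sigma(g) \;=\; \tilde n^{-1}b_M\sigma(\tilde n) \;=\; f_{b_M}(\tilde n)\cdot b_M.
\]
Writing $b_M = k_1\mu_M(t)k_2$ with $k_i \in K_M$ and conjugating by $k_1$ while absorbing $K$-factors, one checks that $gK \in X_\mu(b)$ if and only if $f_{b_M}(\tilde n) \in Y' := k_1 Y_{\mu_M} k_1^{-1}$, where $Y_{\mu_M} := \{n \in N(L): nx_{\mu_M} \in Kx_\mu\}$. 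The set $Y_{\mu_M}$ is admissible, and since it fibres over $N(L)x_{\mu_M}\cap Kx_\mu$ with fibres the cosets of $\mathrm{Stab}_{N(L)}(x_{\mu_M}) = \mu_M(t)N(0)\mu_M(t)^{-1}$ --- a group of normalised dimension $-\langle 2\rho_N,\mu_M\rangle$ --- one gets $\dim Y' = \dim Y_{\mu_M} = d(\mu,\mu_M) - \langle 2\rho_N,\mu_M\rangle$ (conjugation by $k_1 \in K_M$ preserves $N(0)$).

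From here, (1) is immediate: the condition above forces $Y_{\mu_M} \ne \emptyset$, i.e.\ $\mu_M \in S_M(\mu)$, while $b_M \in K_M\mu_M(t)K_M$ gives $x \in X^M_{\mu_M}(b)$ and, as $b_M$ is $\sigma$-conjugate to $b$, forces the image of $p_M(\mu_M)$ in $\pi_1(M)_I$ to be $\kappa$; hence $\mu_M \in S_M(\mu,\kappa)$. For (2), fix $\mu_M \in S_M(\mu,\kappa)$ and $x = mx_0 \in X^M_{\mu_M}(b)$; identifying $\pi^{-1}(x)$ with $N(L)/N(0)$ via $\tilde n \mapsto m\tilde n P(\overline{k}\pot{t})$ turns $\beta^{-1}(x)$ into $f_{b_M}^{-1}(Y')/N(0)$, which is nonempty and ind-admissible since $Y'$ is admissible and nonempty and $f_{b_M}$ is surjective with ind-admissible fibres (Lemma~\ref{lem GHKR1}). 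The same lemma applied to $b_M$ (Newton point $\nu$) gives
\[
 \dim\beta^{-1}(x) \;=\; \dim Y' + \langle\rho,\nu-\nu_{\dom}\rangle \;=\; d(\mu,\mu_M) - \langle 2\rho_N,\mu_M\rangle + \langle\rho,\nu-\nu_{\dom}\rangle.
\]
To finish (2) I would observe that $R_N$ is stable under $W_M$ and under $I$, so the functional $\langle 2\rho_N,\cdot\rangle$ descends to $\pi_1(M)_{\QQ,I}$, on which $\mu_M$ and $\nu$ both represent $\kappa$ as $b$ is basic; hence $\langle 2\rho_N,\mu_M\rangle = \langle 2\rho_N,\nu\rangle$ and the stated formula follows.

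For (3): $X^M_{\mu_M}(b)$ is locally closed in $\Grass_M$, so $\beta^{-1}(X^M_{\mu_M}(b)) = \pi^{-1}(X^M_{\mu_M}(b)) \cap X^{P\subset G}_\mu(b)$ is locally closed in $X^{P\subset G}_\mu(b)$; the dimension formula is then read off from (2) by the fibration argument of \cite{GHKR06}, \S 5.6 --- after truncating $N(L)$ appropriately and stratifying the base, $\pi$ restricts to an iterated affine-space bundle, so the constant fibre dimension of (2) is added to $\dim X^M_{\mu_M}(b)$. Finally (4) combines (1) and (3): $X^{P\subset G}_\mu(b)$ has the same dimension as $X_\mu(b)$ (discussion after Lemma~\ref{lem embedding grassmannian}) and, by (1), is the disjoint union of the finitely many locally closed pieces $\beta^{-1}(X^M_{\mu_M}(b))$ with $\mu_M \in S_M(\mu,\kappa)$ (finite, as $S_M(\mu) \subset \Sigma(\mu)_{M-\dom}$); taking the supremum of (3) over these pieces --- a piece with $X^M_{\mu_M}(b) = \emptyset$ contributing $-\infty$ --- yields the asserted value, and if $X_\mu(b) \ne \emptyset$ then at least one piece is nonempty so the supremum is attained.

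The step I expect to be the main obstacle is (3): transferring the elementary identity $\dim(\mathrm{total}) = \dim(\mathrm{base}) + \dim(\mathrm{fibre})$ to the pro-algebraic fibration $\pi: \Grass_P \to \Grass_M$, whose fibres are infinite-dimensional and only ind-admissible. This is the technical heart of \cite{GHKR06}, \S 5.6; I would import that analysis and check only that nothing in it is disturbed by the presence of $\sigma$ and $I$.
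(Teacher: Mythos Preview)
Your proposal is correct and follows essentially the same route as the paper, which in turn transcribes \cite{GHKR06}, Prop.~5.6.1. Two small remarks. First, your explicit justification that $\langle 2\rho_N,\mu_M\rangle = \langle 2\rho_N,\nu\rangle$ (via $2\rho_N$ being $W_M$- and $I$-invariant, hence factoring through $\pi_1(M)_{\QQ,I}$, where both $\mu_M$ and $\nu$ map to $\kappa$) is a useful clarification: the paper writes $-\langle 2\rho_N,\nu\rangle$ directly where the computation of $\dim(b'N(0)b'^{-1})$ naturally produces $-\langle 2\rho_N,\mu_M\rangle$, so this identity is used tacitly. Second, you overestimate the difficulty of (3): the paper dispatches it in one line (``(3) follows from (2) because source and target of $\beta$ are locally of finite type over $\overline{k}$''), so the pro-algebraic fibration issue you flag as the main obstacle is already absorbed into the passage from ind-admissible fibre dimensions in (2) to an honest finite-type statement; no further analysis beyond \cite{GHKR06}, \S 5.6 is needed.
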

 \begin{proof}
  This is the analogue of \cite{GHKR06}, Prop.\ 5.6.1. The proof of  (1)-(3) is the same as in \cite{GHKR06}; as this is the centerpiece of this section, we give a sketch of the proof for the readers convenience. Let $x = gx_0 \in X_\mu (b)$. We write $g = mn$ with $m \in M(L)$, $n \in N(L)$. Then
  \begin{equation} \label{eq1}
   n^{-1}m^{-1}b\sigma(m)\sigma(n) = g^{-1} b \sigma(g) \in K\mu(t)K
  \end{equation}
  As $N(L) \subset P(L)$ is a normal subgroup, this implies
  \[
   N(L) \cdot (m^{-1} b \sigma(m)) \cap K\mu(t)K \not= \emptyset.
  \]
  Thus $m^{-1}b\sigma(m) \in K\mu_M(t)K$ for a unique $\mu_M \in S_M(\mu)$, i.e.\ $\beta(x) \in X_{\mu_M}^M(b)$ proving (1).
  
  Now let $x = mx_0 \in X_{\mu_M}^M (b)$ and $b' = m^{-1} b \sigma(m)$. Then $\beta^{-1} (x)$ is the set of all $mnx_0$ satisfying (\ref{eq1}), which is equivalent to
  \[
   (n^{-1}b'\sigma(n)b'^{-1}) b' \in K\mu(t)K.
  \]
  Thus
  \[
   \beta^{-1} (x) \cong f_{b'}^{-1} (K\mu(t)Kb'^{-1} \cap N(L))/N(0).
  \]
  Hence we get
  \begin{eqnarray*}
   \dim \beta^{-1}(x) &\stackrel{{\rm Lem.}\ \ref{lem GHKR1}}{=}& \dim(K\mu(t)Kb'^{-1} \cap N(L)) - \langle \rho, \nu - \nu_{\dom} \rangle \\
   &=& (N(L)b'x_0 \cap Kx_\mu) + \dim (b'N(0)b'^{-1}) - \langle \rho, \nu-\nu_{\dom} \rangle \\
   &=& d(\mu, \mu_M) - \langle 2\rho_N, \nu \rangle + \langle \rho, \nu-\nu_{\dom} \rangle,
  \end{eqnarray*}
  where the second equality is true because $N(L)b'x_0 \cap Kx_\mu \cong (K\mu(t)Kb'^{-1} \cap N(L)) / b'N(0)b'^{-1}$. This gives (2). Now (3) follows from (2) because source and target of $\beta$ are locally of finite type over $\overline{k}$.

  Finally we prove (4). Since
  \[
   X_\mu^{P \subset G} (b) = \bigcup_{\mu_M \in S_M(\mu,\kappa)} \beta^{-1}(X_{\mu_M}^M (b))
  \]
  is a decomposition into locally closed subsets, we have
  \[
   \dim X_\mu(b) = \dim X_\mu^{P \subset G} (b) = \sup \{ \dim X_{\mu_M}^M(b);\, \mu_M \in S_M(\mu,\kappa)\}.
  \]
  Applying (3) to this formula finishes the proof.
 \end{proof}
 
 Now the main part of Theorem \ref{thm reduction to superbasic} follows:
 
 \begin{proposition}
  Let $b\in M(L)$ be basic. Assume that Theorem \ref{thm main} is true for $X_{\mu_M}^M(b)$ for every $\mu_M \in S_M(\mu,\kappa)$. Then it is also true for $X_\mu (b)$.
 \end{proposition}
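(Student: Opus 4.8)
The geometric work is already done in Proposition~\ref{prop GHKR3}: part~(4) reduces the statement to evaluating the supremum $\sup\{\dim X_{\mu_M}^M(b)+d(\mu,\mu_M)\,;\,\mu_M\in S_M(\mu,\kappa)\}$, and the plan is to compute this exactly as in \cite{GHKR06}. I would assume $X_\mu(b)\neq\emptyset$, write $\nu_{\dom}$ for the dominant representative of the $M$-dominant Newton point $\nu$ (so that $\nu_{\dom}$ is the Newton point of $b$ in $G(L)$), and substitute into each summand the hypothesis $\dim X_{\mu_M}^M(b)=\langle\rho_M,\mu_M-\nu\rangle-\tfrac12\defect_M(b)$, valid for those $\mu_M$ with $X_{\mu_M}^M(b)\neq\emptyset$ (the only ones contributing to the supremum), together with the estimate $d(\mu,\mu_M)\leq\langle\rho,\mu+\mu_M\rangle-\langle2\rho_M,\mu_M\rangle$ of Lemma~\ref{lem GHKR2}, which becomes an equality once $\mu_M\in\Sigma(\mu)_{M-\mmax}$. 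Using $\rho=\rho_M+\rho_N$, each summand then collapses to $\langle\rho,\mu\rangle-\langle\rho_M,\nu\rangle-\tfrac12\defect_M(b)+\langle\rho_N,\mu_M\rangle$, with equality precisely for $\mu_M\in\Sigma(\mu)_{M-\mmax}$.

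The crucial point is that $\langle\rho_N,\mu_M\rangle$ is in fact independent of the choice of $\mu_M\in S_M(\mu,\kappa)$. Since $R_N$ is stable under the Weyl group of $M$, one has $\langle\rho_N,\alpha^\vee\rangle=0$ for every simple coroot $\alpha^\vee$ of $M$, so $\langle\rho_N,-\rangle$ factors through $p_M\colon X_*(T)\twoheadrightarrow\pi_1(M)$; and since $R_N$ is $\sigma$-stable it factors further through $\pi_1(M)_I$. As all elements of $S_M(\mu,\kappa)$ have the same image $\kappa$ in $\pi_1(M)_I$, and as $\kappa$ and $\nu$ have the same image in $\pi_1(M)_I\otimes\QQ$ because $b$ is basic, this common value equals $\langle\rho_N,\nu\rangle$. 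Consequently the supremum is $\leq\langle\rho,\mu\rangle-\langle\rho_M,\nu\rangle+\langle\rho_N,\nu\rangle-\tfrac12\defect_M(b)$, and it is enough to produce a single $\mu_M\in\Sigma(\mu)_{M-\mmax}$ in the class $\kappa$ with $X_{\mu_M}^M(b)\neq\emptyset$ to conclude that this bound is attained.

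To that end I would take some $\mu_M\in S_M(\mu,\kappa)$ with $X_{\mu_M}^M(b)\neq\emptyset$, which exists because $X_\mu(b)\neq\emptyset$, observe that the easy direction of the Mazur inequality yields $\nu\leq_M\mu_M$, and then pass to a maximal element $\mu_M'\in\Sigma(\mu)_{M-\dom}$ with $\mu_M'\geq_M\mu_M$. By the inclusions $\Sigma(\mu)_{M-\mmax}\subseteq S_M(\mu)\subseteq\Sigma(\mu)_{M-\dom}$ recorded above, $\mu_M'$ lies in $\Sigma(\mu)_{M-\mmax}\subseteq S_M(\mu)$; it still maps to $\kappa$ and still satisfies $\nu\leq_M\mu_M'$, so $X_{\mu_M'}^M(b)\neq\emptyset$ by the known non-emptiness criterion for $M$ (Kottwitz, Gashi). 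Feeding the resulting value of the supremum back into Proposition~\ref{prop GHKR3}(4) and simplifying once more with $\rho=\rho_M+\rho_N$, all of the $\nu$-terms cancel and one is left with $\dim X_\mu(b)=\langle\rho,\mu-\nu_{\dom}\rangle-\tfrac12\defect_M(b)$. It then remains to note that $\defect_M(b)=\defect_G(b)$: both are equal to $\rank_F G-\rank_F J_b$, because $\rank_F M=\rank_F G$ (as $S\subseteq M$) and $J_b^M$ is a Levi subgroup of $J_b$, so they have the same $F$-rank.

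I do not expect a serious obstacle here, since all of the geometry is packaged into Proposition~\ref{prop GHKR3} and what remains is the bookkeeping of \cite{GHKR06}. The two points genuinely particular to the possibly non-split setting, and therefore the ones to be most careful about, are the descent of $\langle\rho_N,-\rangle$ through $\pi_1(M)_I$ (which rests on the $\sigma$-stability of $R_N$) and the identity $\defect_M(b)=\defect_G(b)$ (immediate for split groups, but here using $S\subseteq M$ to force $\rank_F M=\rank_F G$). The only mildly delicate manipulation is verifying that the maximal $\mu_M'$ constructed above really lies in $S_M(\mu)$ and keeps the correct class in $\pi_1(M)_I$ — but this is exactly what the inclusions $\Sigma(\mu)_{M-\mmax}\subseteq S_M(\mu)\subseteq\Sigma(\mu)_{M-\dom}$ provide.
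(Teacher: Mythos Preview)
Your proposal is correct and follows exactly the route the paper indicates: the paper's proof simply says ``This is a consequence of Lemma~\ref{lem GHKR2} and Proposition~\ref{prop GHKR3}. Its proof is literally the same as the proof of its analogue Prop.~5.8.1 in \cite{GHKR06},'' and what you have written is precisely that argument, together with the two adaptations needed in the non-split case (the descent of $\langle\rho_N,-\rangle$ through $\pi_1(M)_I$ via $\sigma$-stability of $R_N$, and $\defect_M(b)=\defect_G(b)$).
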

 \begin{proof}
  This is a consequence of Lemma \ref{lem GHKR2} and Proposition \ref{prop GHKR3}. Its proof is literally the same as the proof of its analogue Prop.\ 5.8.1 in \cite{GHKR06}.
 \end{proof}
 
 Replacing $b$ by a $\sigma$-conjugate if necessary, we may choose a Levi subgroup $M$ such that $b$ is superbasic in $M$. As any superbasic $\sigma$-conjugacy class is basic (\cite{kottwitz85},~Prop.~6.2), the above proposition reduces Theorem \ref{thm main} to the case where $b$ is superbasic. Now it is only left to show that we may assume $G = \Res_{k'/k} \GL_h$.
 
 For this we show that it suffices to prove Theorem \ref{thm main} for the adjoint group $G^{\ad}$. We denote by subscript ``ad'' the image of elements of $G(L)$ resp.\ $X_* (T)$ resp. $\pi_1(G)$ in $G^{\ad}(L)$ resp.\ $X_*(T^{\ad})$ resp. $\pi_1 (G^{\ad})$. For $\omega \in \pi_1(G)$ we write $X_\mu(b)^\omega := X_\mu(b) \cap \Grass^\omega$. If $X_\mu(b)^\omega$ is non-empty the canonical morphism
 \begin{equation} \label{eq2}
  X_\mu(b)^\omega \to X_{\mu_{\ad}} (b_{\ad})^{\omega_{\ad}}.
 \end{equation}
 is an isomorphism. Indeed, for any reductive group $H$ over $k$ the universal covering $\tilde{H} \to H$ induces an isomorphism $\Grass_{\tilde{H}} \stackrel{\sim}{\to} \Grass^0_{H,{\rm red}}$ (\cite{PR08}~Prop.~6.1 and (6.7)), thus $\Grass_{\rm red}^0 \cong \Grass_{\tilde{G}} \cong \Grass_{G^{\ad}, {\rm red}}^0$ and by homogeneity, $\Grass_{\rm red}^\omega  \cong \Grass_{G^{\ad}, {\rm red}}^{\omega_\ad}$. Now one easily checks that $X_{\mu_{\ad}} (b_{\ad})^{\omega_{\ad}}$ is the image of $X_\mu(b)^\omega$.
 
 Now in Lemma 2.1.2 of \cite{CKV} it is proven that if $G$ is of adjoint type and contains a superbasic element $b \in G(L)$, then
 \[
  G \cong \prod_{i=1}^r \Res_{k_i/k} \PGL_{h_i},
 \]
 where the $k_i$ are finite field extensions of $k$. As
 \[
  X_{(\mu_i)_{i=1}^r} ((b_i)_{i=1}^r) \cong \prod_{i=1}^r X_{\mu_i}(b_i)
 \]
 it suffices to prove Theorem \ref{thm main} for $b$ superbasic and $G \cong \Res_{k'/k} \PGL_h$. Using the isomorphism $(\ref{eq2})$ again, we may also assume $G \cong \Res_{k'/k} \GL_h$, which finishes the proof of Theorem \ref{thm reduction to superbasic}.
 
 \section{The superbasic case: Notation and conventions} \label{sect notation}
 
 Let us first fix some basic notation. Let $X$ be a set and $v \in X^n$ with $n$ some positive integer. We then write $v_i$ for the $i$-th component of $v$. Moreover, if $v,w \in X^n$ and $X \subset \RR$ we write $v \leq w$ if $v_i \leq w_i$ for all $i$. For any real number $a$ let $\{a\} := a - \lfloor a \rfloor$ be its fractional part. We denote by $\NN$ the set of positive integers and by $\NN_0$ the set of non-negative integers.

 Let $d := [k':k] = [E:F]$, then $I \cong \ZZ / d\cdot \ZZ$. We choose the isomorphism such that $\sigma$ is mapped to $1$. From now on we only consider the case $G = \Res_{k'/k} \GL_h$ with $S \subset T \subset B \subset G$ where  $S$ and $T$ are the maximal split resp.\ maximal torus which are diagonal and $B$ is the Borel subgroup of lower triangular matrices in $G$.

 We fix a superbasic element $b\in G(L)$ with Newton point $\nu \in X_*(S)_{\QQ,\dom}$ and a cocharacter $\mu \in X_*(T)_{\dom}$. We have to show that if $X_\mu (b)$ is nonempty, we have
 \begin{align}
  \dim X_\mu (b) = \langle \rho, \mu - \nu \rangle - \frac{1}{2} \defect_G(b) \label{term superbasic}.
 \end{align}
 
 As $T$ splits over $k'$, the action of the absolute Galois group on $X_*(T)$ factorizes over $I$. We identify $X_*(T) = \prod_{\tau\in I} \ZZ^h$ with $I$ acting by cyclically permuting the factors. This yields an identification of $X_*(S) = X_*(T)^I$ with $\ZZ^h$ such that
 \[
  X_*(S) \hookrightarrow X_*(T), \nu' \mapsto (\nu')_{\tau\in I}.
 \]
 Furthermore, we denote for an element $\mu'\in X_*(T)$ by $\orbitsum{\mu'} \in  X_*(S)$ the sum of all $I$-translates of $\mu'$.  We impose the same notation as above for $X_*(T)_\QQ = \prod_{\tau\in I} \QQ^h$ and $X_*(S)_\QQ = \QQ^h$.

 We note that an element $\nu'\in \QQ^h$ is dominant if $\nu'_1 \leq \nu'_2 \leq \ldots \leq \nu'_h$ and $\mu'\in \prod_{\tau\in I} \QQ^h$ is dominant if $\mu'_\tau$ is dominant for every $\tau \in I$.

 The Bruhat order is defined on $X_*(S)_{\dom}$ resp.\ $X_*(T)_{\dom}$ such that an element $\mu''$ dominates $\mu'$ if and only if $\mu''-\mu'$ is a non-negative linear combination of relative resp. absolute positive coroots. We write $\mu' \preceq \mu''$ in this case. This motivates the following definition. For $\nu',\nu'' \in \QQ^h$ we write $\nu' \preceq \nu''$ if
 \begin{eqnarray*}
  \sum_{i=1}^j \nu'_i &\geq& \sum_{i=1}^j \nu''_i \quad \textnormal{ for all } j<n \\
  \sum_{i=1}^n \nu'_i &=& \sum_{i=1}^n \nu''_i.
 \end{eqnarray*}
 For $\mu', \mu'' \in \prod_{\tau \in I} \QQ^h$ we write $\mu' \preceq \mu''$ if $\mu'_\tau \preceq \mu''_\tau$ for every $\tau \in I$. If $\nu'$ and $\nu''$ resp.\ $\mu'$ and $\mu''$ are both dominant, this order coincides with the Bruhat order.

 For every $k$-algebra $R$ the $R$-valued points of $G$ are given by $G(R) \cong \Aut_{k' \otimes_k R} (k' \otimes_k R^h)$. We denote $N = k' \otimes_k L^h$, which is canonically isomorphic to the direct sum $\bigoplus_{\tau\in I} N_\tau$ of isomorphic copies of $L^h$. The Frobenius element $\sigma$ acts via the Galois action of $\Gal(L/F)$ on $N$, for all $\tau \in I$ we have $\sigma:N_\tau \stackrel{\sim}{\longrightarrow} N_{\tau+1}$. We fix a basis $(e_{\tau,i})_{i=1}^h$ of the $N_\tau$ such that $\varsigma(e_{\tau,i})= e_{\varsigma\tau,i}$ for all $\varsigma \in I$. For $\tau \in I, l \in \ZZ, i=1,\ldots ,h$ denote $e_{\tau,i+l\cdot h} := t^l\cdot e_{\tau,i}$. Then each $v\in N_\tau$ can be written uniquely as infinite sum 
 \[
  v = \sum_{n \gg -\infty} a_n\cdot e_{\tau,n} \\ 
 \]
 with $a_n \in k$.

 Now we denote by $M^0$ the $\overline{k}\pot{t}$-submodule of $N$ generated by the $e_{\tau,i}$ for $i\geq 0$. With respect to our choice of basis, $K$ is the stabilizer of $M^0$ in $G(L)$ and $g\mapsto gM^0$ defines a bijection
 \[
  \Grass(\overline{k}) \cong \{M= \prod_{\tau\in I} M_\tau;\, M_\tau \textnormal{ is a lattice in } N_\tau\}.
 \]

 Suppose we are given two lattices $M,M' \subset L^h$. By the elementary divisor theorem we find a basis $v_1, \ldots, v_n$ of $M$ and a unique tuple of integers $a_1 \leq \ldots \leq a_n$ such that $t^{a_1}v_1,\ldots , t^{a_n}v_n$ form a basis of $M'$. We define the cocharacter $\inv (M,M'):\GG_m \rightarrow \GL_h, x \mapsto \diag (x^{a_1}, \ldots , x^{a_n})$. If we write $M' = gM$ with $g \in GL_h (L)$ we may equivalently define $\inv (M,M')$ to be the unique cocharacter of the diagonal torus which is dominant w.r.t.\ the Borel subgroup of lower triangular matrices and satisfies $g \in \GL_h(\overline{k}\pot{t}) \inv(M,M')(t) \GL_h(\overline{k}\pot{t})$.

 In terms of the notation introduced above we have
 \[
  X_\mu(b)(\overline{k}) \cong \{ (M_\tau \subset N_\tau \textnormal{ lattice})_{\tau\in I};\, \inv (M_\tau, b\sigma (M_{\tau-1})) = \mu_\tau\}.
 \]
 
 \begin{definition}
  \begin{enumerate}
   \item We call a tuple of lattices $(M_\tau \subset N_\tau)_{\tau\in I}$ a \emph{$G$-lattice}.
   \item We define the \emph{volume} of a $G$-lattice $M = gM^0$ to be the tuple
   \[
    \vol (M) = (\val \det g_\tau)_{\tau \in I}.
   \]
    Similarly, we define the volume of $M_\tau$ to be $\val\det g_\tau$. We call $M$ \emph{special} if $\vol (M) = (0)_{\tau \in I}$.
  \end{enumerate}
 \end{definition}

 The assertion that $b$ is superbasic is by \cite{CKV} equivalent to $\nu$ being of the form $(\frac{m}{d\cdot h}, \frac{m}{d \cdot h}, \ldots , \frac{m}{d \cdot h})$  with $(m,h) = 1$. Then by \cite{kottwitz03}, Lemma 4.4 $X_\mu (b)$ is nonempty if and only if $\nu$ and $\mu$ have the same image in $\pi_1(G)_I$, which is equivalent to $\sum_{\tau \in I, i=1,\ldots h} \mu_{\tau,i} = m$. We assume that this equality holds from now on.

 Furthermore we have for each central cocharacter $\nu' \in X_*(S)$ the obvious isomorphism 
 \[
  X_\mu (b) \stackrel{\sim}{\rightarrow} X_{\mu+\nu'} (\nu'(t)\cdot b).
 \]
 So we may (and will) assume that $\mu \geq 0$, which amounts to saying that we have $b\sigma (M) \subset M$ for $G$-lattices $M \in X_\mu (b) (\overline{k})$.

 Since the affine Deligne-Lusztig varieties of two $\sigma$-conjugated elements are isomorphic, we can assume that $b$ is the form $b(e_{\tau,i}) = e_{\tau,i+m_\tau}$ where $m_\tau = \sum_{i=1}^h \mu_{\tau,i}$. We could have chosen any tuple of integers $(m_\tau)$ such that $\sum_{\tau\in I} m_\tau = m$ but this particular choice has the advantage that the components of any $G$-lattice in $X_\mu(b)$ have the same volume. In general,
 \begin{eqnarray*}
  \vol M_\tau - \vol M_{\tau-1} &=& (\vol M_\tau - \vol b\sigma(M_{\tau-1})) + (\vol b\sigma(M_{\tau-1}) - \vol M_{\tau-1}) \\
  &=& (\sum_{i=1}^h \mu_{\tau,i}) - m_\tau.
 \end{eqnarray*}

 Recall that the geometric connected components of $\Grass$ are in bijection with $\pi_1(G) = \ZZ^I$. This bijection is given by mapping a $G$-lattice to its volume. Thus the subsets of lattices $\Grass$ resp.\ $X_\mu (b)$ obtained by restricting the value of the volume of the components are open and closed. Denote by $X_\mu(b)^i \subset X_\mu(b)$ the subset of all $G$-lattices $M$ such that $M_0$ (or equivalently every $M_\tau$) has volume $i$. Let $\pi \in J_b(F)$ be the element with $\pi(e_{\tau,i}) = e_{\tau,i+1}$ for all $\tau\in I, i\in\ZZ$. Then $g\cdot K \mapsto \pi g \cdot K$ defines an isomorphism $X_\mu(b)^i \stackrel{\sim}{\longrightarrow} X_\mu(b)^{i+1}$. Thus $\dim X_\mu (b) = \dim X_\mu (b)^0$ so that it is enough to consider the subset of special lattices.

 \section{Polygons}

 In this section we introduce our notion of polygons and reformulate the formula (\ref{term superbasic}) in terms of this notion. For this we need to introduce some more notation.

 We denote by $(\QQ^h)^0$ resp.\ $(\prod_{\tau \in I} \QQ^h)^0$ the subspaces of $\QQ^h$ resp.\ $\prod_{\tau \in I} \QQ^h$ generated by the relative resp.\ absolute coroots. Explicitly, these subspaces are given by
 \begin{eqnarray*}
  (\QQ^h)^0 &=& \{ \nu' \in \QQ^h;\, \nu'_1 + \ldots + \nu'_h = 0\} \\
  (\prod_{\tau\in I} \QQ^h)^0 &=& \{ \mu' \in \prod_{\tau\in I} \QQ^h;\, \mu'_\tau \in (\QQ^h)^0 \textnormal{ for every } \tau \in I\}.
 \end{eqnarray*}
 We fix lifts $\omega_i$ and $\omega_{i,\tau}$ of relative resp.\ absolute fundamental weights of the derived group to $X^*(S)$ resp.\ $X^*(T)$. We thus have for $\nu' \in (\QQ^h)^0$ and $\mu' \in (\prod_{\tau\in I} \QQ^h)^0$
 \begin{eqnarray*}
  \langle \omega_i, \nu' \rangle &=& -\sum_{j=1}^i \nu'_j \\
  \langle \omega_{\tau,i}, \mu' \rangle &=& -\sum_{j=1}^i \mu'_{\tau,j}.
 \end{eqnarray*}

 \begin{definition} 
  \begin{enumerate}
   \item For $\nu' \in \QQ^h$ let
    \[ 
     [ \nu' ] := \sum_{i=1}^{h-1} \lfloor \langle \nu', \omega_i \rangle \rfloor.
    \]
   \item For $\nu',\nu'' \in \QQ^h$ we define
   \[
    \length{\nu'}{\nu''} := [-\nu' ] + [\nu''].
   \]
   \item For $\mu',\mu'' \in \prod_{\tau \in I} \QQ^h$ let 
   \[
    \lengthG{\mu'}{\mu''} := \length{\orbitsum{\mu'}}{\orbitsum{\mu''}}.
   \]
  \end{enumerate}
 \end{definition}
 
 Now we give a geometric interpretation of $\length{\nu'}{\nu''}$ in terms of polygons in a special case that covers all applications in this paper. We start with the following observation.

 \begin{lemma} \label{lem trivial facts}
  Let $\nu', \nu'' \in \QQ^h$ with $\nu' \preceq \nu''$ and $\nu'' \in \ZZ^h$. Then
  \begin{enumerate}
   \item $\length{\nu'}{\nu''} = [\nu'' - \nu']$.
   \item $\length{\nu'}{\nu''}$ is independent of the choice of lifts of the fundamental weights.
  \end{enumerate}
 \end{lemma}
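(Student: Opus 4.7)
The plan is to prove (1) by a direct floor-function manipulation exploiting the integrality of $\nu''$, and then deduce (2) from (1) by observing that $\nu'' - \nu'$ lies in the span of the relative coroots.

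For (1), I would unfold the definitions to obtain
\begin{equation*}
\length{\nu'}{\nu''} \;=\; [-\nu'] + [\nu''] \;=\; \sum_{i=1}^{h-1} \bigl(\lfloor\langle\omega_i, -\nu'\rangle\rfloor + \lfloor\langle\omega_i, \nu''\rangle\rfloor\bigr),
\end{equation*}
note that $\langle\omega_i, \nu''\rangle \in \ZZ$ because $\omega_i \in X^*(S)$ and $\nu'' \in \ZZ^h$, and then apply the identity $\lfloor x + n \rfloor = \lfloor x \rfloor + n$ for $n \in \ZZ$ inside each summand to combine the two floors into $\lfloor \langle \omega_i, \nu'' - \nu' \rangle\rfloor$. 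Summing over $i$ yields $\length{\nu'}{\nu''} = [\nu'' - \nu']$.

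For (2), the hypothesis $\nu' \preceq \nu''$ forces the equality $\sum_j \nu'_j = \sum_j \nu''_j$, so $\nu'' - \nu' \in (\QQ^h)^0$. Any two lifts of $\omega_i$ to $X^*(S)$ differ by a character of $S$ trivial on the intersection of $S$ with the derived group, which for $G = \Res_{k'/k}\GL_h$ is an integer multiple of the determinant character; such a character pairs to $0$ with every element of $(\QQ^h)^0$. Hence each $\langle \omega_i, \nu'' - \nu'\rangle$, and therefore $[\nu'' - \nu']$, is independent of the choice of lifts, and by (1) so is $\length{\nu'}{\nu''}$.

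The argument is essentially formal; the only mild subtlety is that neither $[-\nu']$ nor $[\nu'']$ individually is lift-independent---the dependencies cancel only after summing and rewriting via (1), which in turn uses both hypotheses: the integrality of $\nu''$ for the floor manipulation, and the centrality of $\nu'' - \nu'$ (the equality part of $\preceq$) for transporting lift-independence from $(\QQ^h)^0$ to the expression $\length{\nu'}{\nu''}$.
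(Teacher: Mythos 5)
Your proof is correct and is essentially the paper's own argument: part (1) uses the integrality of $\langle \omega_i, \nu''\rangle$ to absorb the integer into the floor, and part (2) then follows because $\nu'' - \nu' \in (\QQ^h)^0$, on which the ambiguity in lifting $\omega_i$ pairs to zero. You merely spell out the justification (the lift ambiguity being a multiple of the determinant character) in a bit more detail than the paper does.
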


 \begin{proof}
  (1) This is an easy consequence of the fact that $\langle \omega_i,\nu'', \rangle$ is an integer for all $i$.

  (2) As $\nu''-\nu' \in (\QQ^h)^0$, the value $[\nu''-\nu']$ is independent of our choice of lifts. Together with part (1) this proves the claim.
 \end{proof}

 \begin{definition}
  To an element $\nu' \in\QQ^h$ we associate a polygon $\calP (\nu')$ which is defined over $[0,h]$ with starting point $(0,0)$ and slope $\nu'_i$ over $(i-1,i)$. We also denote by $\calP (\nu')$ the corresponding piecewise linear function on $[0,h]$.
 \end{definition}

 Let $\nu'$ and $\nu''$ be as in Lemma \ref{lem trivial facts}. Now $\nu' \preceq \nu''$ amounts to saying that $\calP(\nu')$ is above $\calP(\nu'')$ and that these two polygons have the same endpoint. It follows from the first assertion of the lemma that $\length{\nu'}{\nu''}$ is equal to the number of lattice points which are on or below $\calP(\nu')$ and above $\calP(\nu'')$.

 \begin{figure}[h]
  \begin{center}
   \begin{tikzpicture}
    \draw[step=1cm,gray, ultra thin]  (0,0) grid (7.4,3.4);

    \draw[->,very thick,darkgray] (0,0) -- (0,3.5);
    \draw[->,very thick,darkgray] (0,0) -- (7.5,0);

    \draw[thick] (0,0) -- (7,3);
    \draw[thick] (0,0) -- (5,0) --(6,1) -- (7,3);

    \fill (3,1) circle (2pt);
    \fill (4,1) circle (2pt);
    \fill (5,1) circle (2pt);
    \fill (5,2) circle (2pt);
    \fill (6,2) circle (2pt);
   \end{tikzpicture}

   \caption{Geometric interpretation of $\length{\nu'}{\nu''} = 5$ for $\nu' = (\frac{3}{7}, \frac{3}{7}, \frac{3}{7}, \frac{3}{7}, \frac{3}{7}, \frac{3}{7}, \frac{3}{7}), \nu'' = (0,0,0,0,0,1,2)$.}
  \end{center}
 \end{figure}
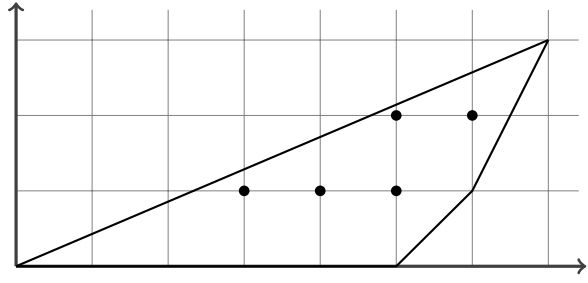

 \begin{proposition} \label{prop superbasic}
  We have
  \[
   \lengthG{\mu}{\nu} = \langle\rho,\mu-\nu\rangle - \frac{1}{2}. \defect_G(b)
  \]
  In particular the formula (\ref{term superbasic}) is equivalent to
  \[
   \dim X_\mu(b) = \lengthG{\mu}{\nu}.
  \]
 \end{proposition}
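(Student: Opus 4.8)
The plan is to prove the identity $\lengthG{\mu}{\nu} = \langle \rho, \mu - \nu\rangle - \tfrac12 \defect_G(b)$ by computing both sides as explicit sums over the factors $\tau \in I$ and a standard index $i = 1, \dots, h-1$, and then to read off the equivalence with $(\ref{term superbasic})$ as a formal consequence. First I would unwind the left-hand side: by definition $\lengthG{\mu}{\nu} = \length{\orbitsum{\mu}}{\orbitsum{\nu}} = [-\orbitsum{\mu}] + [\orbitsum{\nu}]$. Since $\mu \geq 0$ and $\nu$ is central with $\orbitsum{\nu} = (\tfrac{m}{h}, \dots, \tfrac{m}{h})$, and since $\orbitsum{\mu}$ and $\orbitsum{\nu}$ have the same image in $\pi_1(G)_I$ (the non-emptiness hypothesis, i.e.\ $\sum \mu_{\tau,i} = m$), we are in the situation $\orbitsum{\nu} \preceq \orbitsum{\mu}^{\dom}$ with $\orbitsum{\mu}^{\dom} \in \ZZ^h$; after passing to the dominant representative — which changes neither side, since both $\langle \rho, \cdot\rangle$ and the bracket $[\,\cdot\,]$ only see $I$-orbit sums up to the Weyl group here — Lemma \ref{lem trivial facts}(1) lets me write $\length{\orbitsum{\nu}}{\orbitsum{\mu}^{\dom}} = [\orbitsum{\mu}^{\dom} - \orbitsum{\nu}]$. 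So the left-hand side equals $\sum_{i=1}^{h-1} \lfloor \langle \omega_i, \orbitsum{\mu} - \orbitsum{\nu}\rangle \rfloor = \sum_{i=1}^{h-1} \lfloor \sum_{\tau \in I}\langle \omega_i, \mu_\tau\rangle - \tfrac{im}{h}\rfloor$ in the relative weight normalization.

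Next I would compute the right-hand side. One has $\langle \rho, \mu\rangle = \sum_{\tau \in I}\langle \rho^{\GL_h}, \mu_\tau\rangle$, and similarly $\langle \rho, \nu\rangle = d\langle \rho^{\GL_h}, \orbitsum{\nu}/d \cdot \text{(per-factor)}\rangle$; writing everything through the absolute fundamental weights $\omega_{\tau,i}$ and using $\langle 2\rho^{\GL_h}, \cdot\rangle = \sum_{i=1}^{h-1} \langle \alpha_i^\vee\text{-pairing}\rangle$ one gets $\langle \rho, \mu - \nu\rangle = \sum_{i=1}^{h-1}\big(\sum_{\tau\in I}\langle \omega_i, \mu_\tau\rangle - \tfrac{im}{h}\big)$ — i.e.\ exactly the left-hand side but \emph{without} the floor functions. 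So the whole proposition comes down to the combinatorial identity
\[
 \sum_{i=1}^{h-1} \Big\{ \tfrac{im}{h} - \textstyle\sum_{\tau\in I}\langle \omega_i, \mu_\tau\rangle \Big\} \;=\; \tfrac12 \defect_G(b),
\]
where $\{\cdot\}$ is the fractional part (note $\sum_\tau \langle\omega_i,\mu_\tau\rangle \in \ZZ$, so the fractional part is just $\{-\tfrac{im}{h}\} = \{\tfrac{(h-i)m}{h}\}$). Since $(m,h)=1$, as $i$ runs over $1, \dots, h-1$ the residues $im \bmod h$ run over all of $\{1, \dots, h-1\}$, so the sum of fractional parts is $\tfrac1h\sum_{j=1}^{h-1} j = \tfrac{h-1}{2}$.

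It therefore remains to identify $\defect_G(b)$ with $h-1$. Here $b$ is superbasic in $G = \Res_{k'/k}\GL_h$, so $J_b$ is the group of units in a central division algebra over $F$ of degree $h$ (the $I$-fixed / norm-one structure aside, its $F$-rank is $1$), giving $\rank_F J_b = 1$; on the other hand $\rank_F G = \rank_F \Res_{k'/k}\GL_h = h$. Hence $\defect_G(b) = h - 1$, and the two displayed computations match. Finally, combining $\lengthG{\mu}{\nu} = \langle \rho, \mu-\nu\rangle - \tfrac12\defect_G(b)$ with the statement of Theorem \ref{thm main} (i.e.\ $(\ref{term superbasic})$) immediately yields $\dim X_\mu(b) = \lengthG{\mu}{\nu}$, as claimed.

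The main obstacle I anticipate is bookkeeping with the two weight normalizations (relative fundamental weights $\omega_i$ on $X^*(S)$ versus absolute $\omega_{\tau,i}$ on $X^*(T)$) and making sure the passage $\langle \rho, \cdot\rangle \leftrightarrow \sum_i \langle \omega_i, \cdot\rangle$ is correctly accounted for — in particular that the "error terms" coming from the lifts $\omega_i$ of the fundamental weights cancel, which is guaranteed by Lemma \ref{lem trivial facts}(2) since $\orbitsum{\mu} - \orbitsum{\nu} \in (\QQ^h)^0$. The genuinely substantive input is just the elementary fact that $\{im/h\}$ for $i = 1, \dots, h-1$ permutes $\{1/h, \dots, (h-1)/h\}$ when $\gcd(m,h)=1$, plus the rank computation for $J_b$.
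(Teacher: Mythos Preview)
Your proof is correct and follows the same overall architecture as the paper: both arguments rewrite $\langle\rho,\mu-\nu\rangle$ as $\sum_{i=1}^{h-1}\langle\omega_i,\orbitsum{\mu}-\orbitsum{\nu}\rangle$, rewrite $\lengthG{\nu}{\mu}$ as the same sum with floors inserted, and then identify the discrepancy $\sum_{i=1}^{h-1}\{im/h\}$ with $\tfrac12\defect_G(b)$. The difference lies entirely in how that last identification is made. The paper proves a separate lemma $\tfrac12\defect_G(b)=\sum_i\{\langle\omega_i,\orbitsum{\nu}\rangle\}$ by invoking Bruhat-Tits theory (Lemma~\ref{lem BT}) to reduce $\rank_F J_b$ to the dimension of a fixed-point set in an apartment, then reducing to $d=1$ and citing Kottwitz's general defect formula \cite{Kottwitz06}. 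You instead compute both sides of that lemma directly: $\sum_{i=1}^{h-1}\{im/h\}=(h-1)/2$ by the elementary permutation argument using $\gcd(m,h)=1$, and $\defect_G(b)=h-1$ by determining $\rank_F J_b=1$ from the structure of $J_b$. Your route is shorter and avoids the external reference; the paper's lemma is phrased so as to mirror Kottwitz's general statement, but in this superbasic $\Res_{k'/k}\GL_h$ situation it ultimately computes the same two numbers.

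Two small corrections. First, $J_b$ is not the unit group of a division algebra over $F$; since $J_b$ is an inner form of $G=\Res_{E/F}\GL_h$, it is $\Res_{E/F}(D^\times)$ for a central division algebra $D$ over $E$ of degree $h$, and then $\rank_F J_b=\rank_E D^\times=1$ as you claim. Second, your detour through ``passing to the dominant representative $\orbitsum{\mu}^{\dom}$'' is unnecessary (and does not by itself justify swapping the arguments of $\ell$): since each $\mu_\tau$ is dominant, $\orbitsum{\mu}$ is already dominant, and Lemma~\ref{lem trivial facts}(1) applies directly with $\nu'=\orbitsum{\nu}$, $\nu''=\orbitsum{\mu}$. (The paper itself is not entirely consistent about writing $\lengthG{\mu}{\nu}$ versus $\lengthG{\nu}{\mu}$; the quantity actually computed in both proofs is $[\orbitsum{\mu}-\orbitsum{\nu}]=\lengthG{\nu}{\mu}$.)
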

 
 \begin{remark}
  This formula coincides with the formula conjectured by Rapoport in \cite{rapoport06}, p. 296 up to a minor correction. Rapoport's formula becomes correct if one multiplies all cocharacters by $d$.
 \end{remark}

 In order to prove this proposition, we need the following lemmas.

 First we need the following fact from Bruhat-Tits theory, which holds in greater generality than just our specific situation. For a reductive group $H$ over a quasi-local field we denote by $\BT(H)$ its Bruhat-Tits building.

 \begin{lemma} \label{lem BT}
  Assume that $F$ is a quasi-local field and $L$ the completion of its maximal unramified extension. Let $H$ be a reductive group over $F$ and $A$ a maximal split torus of $H_L$ defined over $F$. If its apartment $\fraka$ contains a $\Gal(F^{nr}/F)$-stable alcove $C$, then $A$ contains a maximal split torus of $H$. 
 \end{lemma}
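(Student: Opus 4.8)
The plan is to translate the hypothesis "the apartment $\fraka$ contains a $\Gal(F^{nr}/F)$-stable alcove" into a statement about the relative root system and then identify a subtorus of $A$ that descends to $F$. First I would recall the standard setup: since $A$ is defined over $F$ and splits over $L = \widehat{F^{nr}}$, the Galois group $\Gamma := \Gal(F^{nr}/F)$ acts on the apartment $\fraka = X_*(A)_\RR \oplus (\text{origin choice})$ compatibly with its action on $X_*(A)$ and on the affine root hyperplanes, and this action is by affine isometries preserving the polysimplicial structure. The Frobenius $\sigma$ generates (a dense subgroup of) $\Gamma$, so "$\Gamma$-stable" is the same as "$\sigma$-stable". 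The key point is that $A$ contains a maximal split torus of $H$ if and only if $A^\sigma{}^\circ$ (the connected component of the $\sigma$-fixed subtorus) is a maximal split torus of $H$ over $F$; equivalently, the fixed space $X_*(A)^\sigma_\RR$ is not contained in any proper "rational" affine subspace — more precisely, that the relative apartment of $H$ over $F$ is exactly $\fraka^\sigma$ and has full dimension equal to the $F$-rank of $H$. So the real content is: a $\sigma$-stable alcove forces $\fraka^\sigma$ to be an apartment for $H_F$, i.e.\ the $F$-split rank is as large as possible inside $A$.

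The main steps I would carry out are as follows. (1) Observe that if $C \subset \fraka$ is a $\sigma$-stable alcove, then $\sigma$ fixes the barycenter $x_0$ of $C$; hence $x_0$ is a $\sigma$-fixed, hence $F$-rational, point of the building, and we may take it as the origin, so that the $\sigma$-action on $\fraka$ is \emph{linear} (an honest action on $X_*(A)_\RR$ by a finite-order automorphism preserving the affine root system based at $x_0$). (2) Because $\sigma$ has finite order on $X_*(A)_\RR$, the averaging projection $e := \frac{1}{|\langle\sigma\rangle|}\sum \sigma^i$ gives a $\sigma$-equivariant retraction onto $V^\sigma := X_*(A)^\sigma_\RR$, and $V^\sigma = X_*(A^\sigma{}^\circ)_\RR$. (3) Now I claim $A^\sigma{}^\circ$ is a maximal split torus of $H$. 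The relative root system of $H$ over $F$ with respect to $A^\sigma{}^\circ$ is obtained by restricting the absolute roots (relative to $A$) to $V^\sigma$; since $x_0$ is a special... — actually, more robustly: the alcove $C$ is a chamber for the affine Weyl group of $(H_L, A)$, and a $\sigma$-stable chamber restricts to a chamber $C^\sigma = C \cap V^\sigma$ for the induced affine reflection group on $V^\sigma$, which is the affine Weyl group of the relative root system of $H_F$. A chamber is always top-dimensional in the ambient space of its affine Weyl group; since $C^\sigma$ is the full fixed set of a chamber it is top-dimensional in $V^\sigma$, so the relative affine Weyl group of $H_F$ acts with full-dimensional chambers on $V^\sigma$, forcing the relative apartment of $H_F$ to be all of $V^\sigma$. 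Therefore $A^\sigma{}^\circ$ has $\dim V^\sigma = $ the $F$-rank of $H$, i.e.\ it is a maximal split $F$-torus, and it is contained in $A$. (4) Cite the standard Bruhat--Tits references (\cite{BD}, or Bruhat--Tits, or the treatment in \cite{kottwitz03}) for the facts that $\fraka^\sigma$ is the apartment of a maximal split $F$-torus when it contains a $\Gamma$-stable point, and that this torus lies inside $A$.

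The step I expect to be the main obstacle is (3): making precise that a $\sigma$-stable alcove forces the \emph{$F$-rational} (not merely $F^{nr}$-rational) apartment to be all of $\fraka^\sigma$, rather than some proper subspace. The subtlety is that in general the fixed apartment $\fraka^\sigma$ could, a priori, fail to be an apartment of $H_F$ — one needs that $\sigma$ acts without "rotating" a would-be split direction, and the $\sigma$-stable alcove is exactly what rules this out (a rotation of positive order fixes no chamber). The cleanest way I would phrase it is via the descent of buildings: the building of $H_F$ is $\BT(H_L)^{\Gamma}$ with its induced structure, an apartment of $H_F$ inside $\fraka$ is a maximal $\Gamma$-stable affine subspace on which the induced reflection group acts essentially, and a $\Gamma$-stable alcove exhibits $\fraka^\sigma$ itself as such an apartment because the alcove's fixed locus is a genuine chamber there. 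The rest is bookkeeping: $A^\sigma{}^\circ$ is the torus whose apartment this is, and it sits inside $A$ by construction. No new ideas beyond standard Bruhat--Tits descent are needed; the lemma is essentially the statement that $F$-anisotropy of a torus is detected by the absence of a Galois-stable chamber, read contrapositively.
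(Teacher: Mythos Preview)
Your plan is headed in the right direction and would ultimately work, but it is considerably more elaborate than what the paper actually does, and the step you yourself flag as the obstacle is precisely the one the paper avoids.

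The paper's proof is a three-line dimension count. It uses only: (i) the identification $\BT(H) = \BT(H_L)^{\Gal(F^{nr}/F)}$; (ii) the fact that an alcove $C$ is open in $\BT(H_L)$, so $C^{\Gal(F^{nr}/F)} = C \cap \BT(H)$ is a nonempty open subset of $\BT(H)$; and (iii) therefore
\[
 \dim \fraka^{\Gal(F^{nr}/F)} \;\geq\; \dim C^{\Gal(F^{nr}/F)} \;=\; \dim \BT(H) \;=\; \rank_F H,
\]
which immediately forces the maximal $F$-split subtorus $(A^{\Gamma})^\circ \subset A$ to have rank equal to $\rank_F H$.

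Your route instead tries to show that $C^\sigma$ is a \emph{chamber} for the relative affine Weyl group of $H_F$ acting on $V^\sigma$, and then to invoke that chambers are top-dimensional. That statement is true, but establishing it rigorously (the ``folding'' of the affine root system under $\sigma$, and the fact that fixed loci of alcoves are alcoves for the folded system) is exactly the nontrivial input you would need to supply. The paper sidesteps this entirely: it never mentions the relative affine Weyl group, never claims $C^\sigma$ is a chamber, and only uses that $C$ is open in the building --- a purely topological fact --- to get the dimension inequality. So what the paper buys is a much shorter argument with no appeal to the structure theory of folded root systems; what your approach would buy, if carried out, is a more explicit identification of $\fraka^\sigma$ with an apartment of $H_F$, which is stronger than what is needed here.
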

 \begin{proof}
  We identify $\BT(H)$ with the $\Gal(F^{nr}/F)$-fixed points in $\BT(H_L)$. Then $\BT(H) \cap C$ is a nonempty open subset of $\BT(H)$. In particular we have
  \[
   \dim \fraka^{\Gal(F^{nr}/F)} \geq \dim C^{\Gal(F^{nr}/F)} = \dim \BT(H) = \rank H.
  \]
  Thus $A$ contains a maximal split torus of $H$.
 \end{proof}

 \begin{lemma}
  \[
   \frac{1}{2}\defect_G (b) = \sum_{i=1}^{h-1} \left\{ \langle \omega_i, \orbitsum{\nu} \rangle \right\}
  \]
 \end{lemma}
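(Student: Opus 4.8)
The plan is to compute the defect of $b$ directly from the definition $\defect_G(b) = \rank_F G - \rank_F J_b$. Since $G = \Res_{k'/k}\GL_h$ we have $\rank_F G = \rank_E \GL_h = h$, so the task is to identify $\rank_F J_b$ in terms of the Newton point $\nu = (\tfrac{m}{dh},\dots,\tfrac{m}{dh})$. Recall that $J_b$ is an inner form of the centralizer $M_\nu$ of $\nu$ in $G_F$; since $\nu$ is central in $M_\nu$ and superbasic, $J_b(F)$ is the unit group of a division algebra over a field extension of $F$, so its $F$-rank is the number of simple factors in its apartment, i.e.\ the dimension of the maximal split subtorus of $J_b$. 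The natural candidate for a maximal torus of $J_b$ defined over $F$ is $A \subset \GL_{h,L}$, the diagonal torus in the $\tau = 0$ component of $N = \bigoplus_{\tau\in I} N_\tau$; its $F$-rational structure is governed by how $\sigma$ (which cycles the components and acts through $b$) permutes the standard characters.

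First I would make the combinatorial bookkeeping explicit. With $b(e_{\tau,i}) = e_{\tau,i+m_\tau}$ and $\sigma: N_{\tau}\to N_{\tau+1}$, the Frobenius $\Phi = b\sigma$ acts on the lattice $\ZZ^h$ of characters of $A$ (identified via the basis $e_{0,i}$) by a single $dh$-cycle twisted by $m$, because $(m,h)=1$ forces $(m, dh)$ to control the orbit structure; concretely the $\Phi^d$-action on $X^*(A)\cong\ZZ$-worth of characters has orbits of length $h/(m',h)$ for suitable $m'$, and after unwinding, the number of $\Phi$-orbits on the relevant index set equals $\gcd(m,h) \cdot (\text{something})$ — this is exactly the point where I expect to have to be careful. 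The upshot I am aiming for is the formula
\[
\rank_F J_b = \sum_{i=1}^{h} \bigl( 1 - \bigl\{\langle \omega_i - \omega_{i-1}, \orbitsum{\nu}\rangle\bigr\}\text{-type correction}\bigr),
\]
and more usefully: the number of split directions lost, i.e.\ $h - \rank_F J_b$, counts the indices $i \in \{1,\dots,h-1\}$ at which the partial sum $\langle\omega_i,\orbitsum\nu\rangle = -\sum_{j\le i}\tfrac{m}{h}$ fails to be an integer, each contributing its fractional part. Then $\tfrac12\defect_G(b) = \tfrac12(h - \rank_F J_b)$, and I would match this against $\sum_{i=1}^{h-1}\{\langle\omega_i,\orbitsum\nu\rangle\}$ using that $\orbitsum\nu = d\nu = (\tfrac{m}{h},\dots,\tfrac mh)$, so $\langle\omega_i,\orbitsum\nu\rangle = -\tfrac{im}{h}$ and $\{-\tfrac{im}{h}\} = 1 - \{\tfrac{im}{h}\}$ whenever $h\nmid im$; since $(m,h)=1$ the values $\{\tfrac{im}{h}\}$ for $i=1,\dots,h-1$ are a permutation of $\{\tfrac1h,\dots,\tfrac{h-1}{h}\}$, hence symmetric about $\tfrac12$, giving $\sum_{i=1}^{h-1}\{-\tfrac{im}{h}\} = \tfrac{h-1}{2}$. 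So I must independently check that $\tfrac12\defect_G(b) = \tfrac{h-1}{2}$, i.e.\ that $\rank_F J_b = 1$.

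To pin down $\rank_F J_b = 1$, here is where Lemma \ref{lem BT} enters: I would argue that $J_b$, being the unit group of a division algebra central over $F$ (the superbasic, hence basic, hypothesis with $(m,h)=1$ makes $J_b$ an inner form of $\GL_1$ over a division algebra, with a unique conjugacy class of maximal split torus of rank $1$), has $F$-rank exactly $1$ — the centre. Concretely: the maximal split torus $A$ of $\GL_{h,L}$ sitting in one component carries a $\Gal(F^{nr}/F)$-stable alcove precisely when its apartment descends, and the $\Phi$-action being a single $(dh)$-cycle-with-twist means the only $\Phi$-fixed characters of $A$ form a rank-one lattice; by Lemma \ref{lem BT} applied to $H = J_b$ and $A$, $A$ contains a maximal split torus of $J_b$, and the fixed-lattice computation gives that this split torus has rank $1$. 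Hence $\defect_G(b) = h - 1$ and $\tfrac12\defect_G(b) = \tfrac{h-1}{2} = \sum_{i=1}^{h-1}\{\langle\omega_i,\orbitsum\nu\rangle\}$, as claimed.

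The main obstacle will be the middle paragraph: correctly translating "the $\sigma$-twisted-by-$b$ action on the components $N_\tau$ of $N$" into the permutation action on characters of the chosen torus $A$, and verifying that the $(m,h)=1$ condition forces a single orbit (equivalently $\rank_F J_b = 1$). All the fractional-part arithmetic on the right-hand side is then routine given $\orbitsum\nu = (\tfrac mh,\dots,\tfrac mh)$ and $(m,h)=1$; the content is the rank computation, which I would phrase via Lemma \ref{lem BT} to avoid an explicit Brauer-group argument, or alternatively cite the structure of $J_b$ for superbasic $b$ from \cite{kottwitz85} directly and just read off its $F$-rank.
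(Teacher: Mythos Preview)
Your approach is correct and genuinely more elementary than the paper's. Both sides of the claimed identity equal $\tfrac{h-1}{2}$: on the right, since $\orbitsum{\nu}=(\tfrac{m}{h},\dots,\tfrac{m}{h})$ and $(m,h)=1$, the fractional parts $\{-\tfrac{im}{h}\}$ for $i=1,\dots,h-1$ are a permutation of $\tfrac{1}{h},\dots,\tfrac{h-1}{h}$ and sum to $\tfrac{h-1}{2}$; on the left, $\rank_F G=h$ and $\rank_F J_b=1$, so $\tfrac12\defect_G(b)=\tfrac{h-1}{2}$. The paper instead computes $\rank_F J_b$ via the fixed-point set of $\sigma'=\Int(b)\circ\sigma$ on the full apartment $\fraka=\prod_\tau\fraka_\tau$ (reducing to the cyclic-permutation fixed points $\fraka_0^{w_m}$), but then, rather than evaluating the right-hand side directly, invokes Kottwitz's general defect formula for split groups to match the two expressions. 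Your route trades that citation for a short arithmetic check, which is cleaner here since superbasicity already forces both sides to a single number.

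Two places to tighten: first, your middle paragraph about $\Phi$-orbits is unnecessary and, as you suspect, not quite right as stated; you can drop it entirely. Second, when invoking Lemma~\ref{lem BT}, note that the relevant torus is the full diagonal torus $T\subset G_L$ of rank $dh$, not the $\tau=0$ factor alone; the $\sigma'$-fixed subspace of its apartment is computed exactly as in the paper and has dimension~$1$. Alternatively, and even more directly: $b$ superbasic means no $\sigma$-conjugate lies in a proper Levi, hence $J_b$ has no proper $F$-parabolic, hence $J_b$ is $F$-anisotropic modulo its centre $Z(G)=\Res_{k'/k}\GG_m$, which has $F$-rank~$1$. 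This bypasses Lemma~\ref{lem BT} altogether.
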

 \begin{proof}
  The analogous assertion for split reductive groups with simply connected derived group was proven by Kottwitz in \cite{Kottwitz06}. We modify his proof in order to get the result in the case of $\Res_{k'/k} \GL_h$. 

  We consider the groups $T' \subset B' \subset GL_h$ where $T'$ is the diagonal torus and $B'$ the Borel subgroup of \emph{upper} triangular matrices. Denote by $W$ resp.\ $\widetilde{W}$ the Weyl group resp.\ extended affine Weyl group of $\GL_h$. We identify $W$ with the symmetric group $S_h$ and $\widetilde{W}$ with $\ZZ^h \rtimes S_h$. The canonical projection of the stabilizer $\Omega$ of the standard (upper triangular) Iwahori subgroup in $\widetilde{W}$ to $\pi_1 (G)$ is an isomorphism. Thus we get an embedding
  \[ 
   \ZZ \cong \pi_1 (\GL_h) \cong \Omega \hookrightarrow \widetilde{W}, n \mapsto \widetilde{w}_n := \left( (1,0,\ldots,0)\cdot \left( \begin{array}{cccc} 1 & 2 & \cdots & h \\ 2 & 3 & \cdots & 1 \end{array} \right) \right)^n.
  \]
  We denote by $w_n$ the image of $\widetilde{w}_n$ w.r.t.\ the canonical projection $\widetilde{W} \twoheadrightarrow W$.
 
  Decompose $b = (b_\tau)_{\tau\in I}$ according to $\Res_{k'/k} \GL_h (L) \cong \prod_{\tau \in I} \GL_h (L)$. Then $b_\tau$ is the generalized permutation matrix representing $\tilde{w}_{m_\tau}$ in $\GL_h (L)$. Denote by $\fraka = \prod_{\tau\in I} \fraka_\tau \cong \prod_{\tau\in I} \RR^h$ the apartment of $G_L$ corresponding to $T$. Now $\BT(J_b)$ is canonically isomorphic to the fixed points of the Bruhat-Tits building $\BT(G_L)$ of $G_L$ of $\sigma' := \Int (b) \circ \sigma = (w_{m_\tau})_{\tau \in I} \cdot \sigma$. Since the standard Iwahori  is $\sigma'$-stable, we get by Lemma \ref{lem BT}
  \begin{eqnarray*}
   \rank J_b &=& \dim \fraka^{(w_{m_\tau})_{\tau\in I} \cdot \sigma} \\
    &=& \dim\{(v_\tau) \in \fraka; w_{\tau+1} (v_\tau) = v_{\tau+1}\} \\
    &=& \dim\{v_0 \in \fraka_0; w_{m_0}\cdot w_{m_{d-1}} \cdot \ldots \cdot w_{m_1} (v_0) = v_0\} \\
    &=& \dim\{v_0 \in \fraka_0; w_m (v_0) = v_0\}.
  \end{eqnarray*}
  Now we can reduce to the case $d=1$: Denote by $G_0 \cong \GL_h$ the factor of $G_L$ corresponding to $\tau = 0$ with diagonal torus $T_0$ and lower triangular Borel subgroup $B_0$. We identify the root data of $G_0$ with the relative root data of $G$. Now we apply the longest Weyl group element $w$ to our formula to compensate the change of Borel subgroups and then apply \cite{Kottwitz06}, Theorem 1.9.2 to finish the proof.
  \[
   \dim \fraka_0 - \dim \fraka_0^{w_m} = \dim \fraka_0 - \dim\fraka_0^{w\cdot w_m \cdot w} = \sum_{i=1}^{h-1} \left\{ \langle \omega_i, (\frac{m}{h}, \ldots , \frac{m}{h}) \right\} = \sum_{i=1}^{h-1} \left\{ \langle \omega_i, \orbitsum{\nu} \rangle \right\}
  \]
 \end{proof}

 \begin{proof}[Proof of Proposition \ref{prop superbasic}]
  Using the lemma above, we get that
  \begin{eqnarray*}
   \langle \rho, \mu - \nu \rangle -\frac{1}{2}\cdot\defect_G (b) &=& \sum_{i=1,\ldots,h-1 \atop \tau\in I} \langle \omega_{\tau,i}, \mu - \nu \rangle - \sum_{i=1}^{h-1} \left\{ \langle \omega_i, \orbitsum{\nu}\rangle \right\} \\
   &=& \sum_{i=1}^{h-1} \langle \sum_{\tau\in I} \omega_{\tau,i}, \mu - \nu \rangle - \sum_{i=1}^{h-1} \left\{\langle \omega_{h-i}, \orbitsum{\nu} \rangle \right\} \\
   &=& \sum_{i=1}^{h-1} (\langle \omega_i, \orbitsum{\mu}\rangle + \langle \omega_i, -\orbitsum{\nu} \rangle) - \sum_{i=1}^{h-1} \left\{\langle \omega_i, -\orbitsum{\nu} \rangle \right\} \\
   &=& \lengthG{\nu}{\mu}
  \end{eqnarray*}
 \end{proof}

 Finally we prove two lemmas which we will use in section \ref{sect combinatorics}. The reader may skip the rest of this section for the moment.

 \begin{lemma} \label{lem length to dom}
  Let $\nu'\in\ZZ^h$. Then 
  \[
   \length{\nu'}{\nu'_{\dom}} = \sum_{1\leq i < j \leq h} \max\{ \nu'_i - \nu'_j, 0\} .
  \]
 \end{lemma}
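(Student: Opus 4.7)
My plan is a direct three-step computation. First, I would observe that $\nu' \preceq \nu'_{\dom}$: the increasing rearrangement $\nu'_{\dom}$ minimises each partial sum $\sum_{i=1}^j \nu'_i$ over all permutations of the entries of $\nu'$, while the total sum at $j=h$ is preserved. Since $\nu'_{\dom} \in \ZZ^h$, Lemma \ref{lem trivial facts}(1) applies and gives $\length{\nu'}{\nu'_{\dom}} = [\nu'_{\dom} - \nu']$, reducing the statement to computing $[\eta]$ for $\eta := \nu'_{\dom} - \nu'$.

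Second, because $\eta$ has integer entries summing to zero, each pairing $\langle \omega_i, \eta \rangle = -\sum_{j \leq i} \eta_j$ is already an integer, so the floors in the definition of $[\eta]$ become redundant. Swapping the order of summation and then using $\sum_j \eta_j = 0$, I would obtain
\[
 [\eta] \;=\; -\sum_{i=1}^{h-1}\sum_{j=1}^{i}\eta_j \;=\; -\sum_{j=1}^{h-1}(h-j)\eta_j \;=\; \sum_{j=1}^{h} j\, \eta_j \;=\; f(\nu'_{\dom}) - f(\nu'),
\]
where I set $f(\nu') := \sum_j j\,\nu'_j$.

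Third, I would identify $f(\nu'_{\dom}) - f(\nu')$ with $I(\nu') := \sum_{1\leq i<j\leq h} \max\{\nu'_i - \nu'_j, 0\}$ via a bubble-sort argument, which is the only genuinely combinatorial step. An adjacent transposition at positions $(i,i+1)$ exchanging values $a := \nu'_i > b := \nu'_{i+1}$ raises $f$ by $a-b$ and lowers $I$ by the same amount: the contribution $a-b$ of the pair $(i,i+1)$ is destroyed, while for every $l>i+1$ the two pairs $(i,l),(i+1,l)$ contribute $\max(a-\nu'_l,0)+\max(b-\nu'_l,0)$ both before and after (the sum is symmetric in $a,b$), and similarly for pairs $(k,i),(k,i+1)$ with $k<i$; all other pairs are unaffected. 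Hence $f+I$ is invariant under any sequence of adjacent swaps, and bubble-sorting $\nu'$ into $\nu'_{\dom}$ yields $f(\nu'_{\dom}) + 0 = f(\nu') + I(\nu')$, as required.
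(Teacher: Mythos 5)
Correct, and essentially the same approach as the paper: both arguments track the effect of adjacent transpositions in a bubble sort from $\nu'$ to its dominant rearrangement. Your intermediate reduction via Lemma~\ref{lem trivial facts}(1) to the explicit expression $\sum_j j\,\eta_j$ with $\eta=\nu'_{\dom}-\nu'$ is a clean reorganisation of the paper's more telegraphic version of the same swap argument.
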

 \begin{proof}
  The assertion follows from the following observation. If $\nu'' \in \ZZ^h$ with $\nu''_i > \nu''_{i+1}$ and we swap these coordinates, then $[\nu'']$ is reduced by the difference of these two values. Now $\nu'_{\dom}$ is obtained from $\nu'$ by carrying out the above transposition repeatedly until the coordinates are in increasing order. Since we have swapped the coordinates $\nu'_i$ and $\nu'_j$ during this construction if and only if $i<j$ and $\nu'_i > \nu'_j$ we get the above formula.
 \end{proof}

 \begin{lemma} \label{lem length of transfer}
  Let $\nu' \in \ZZ^{h}$ be dominant, $ 1\leq i \leq j \leq h, \beta \in \ZZ_{\geq 0}$ and 
  \[
   \nu'' := (\nu'_1,\ldots,\nu'_{i-1},\nu'_i-\beta,\nu'_{i+1},\ldots,\nu'_{j-1},\nu'_j + \beta, \nu'_{j+1}, \ldots, \nu'_h).
  \]
  Then
  \[
   \length{\nu'}{\nu''_{\dom}} = \left( \sum_{k=1}^\beta \sum_{l=\nu'_i - \beta}^{\nu'_j -1 } |\{n;\, \nu'_n = k+l \}| \right) - \beta.
  \]
 \end{lemma}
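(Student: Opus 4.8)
The strategy is to reduce Lemma~\ref{lem length of transfer} to Lemma~\ref{lem length to dom} by a direct computation, exploiting the fact that we only move mass between the $i$-th and $j$-th coordinates of a dominant vector $\nu'$. First I would compute $\length{\nu'}{\nu''_{\dom}}$ via the telescoping identity available for the bracket: since $\nu'$ itself is dominant, $\length{\nu'}{\nu'_{\dom}} = 0$, and more usefully $\length{\nu'}{\nu''_{\dom}} = [-\nu'] + [\nu''_{\dom}]$, while Lemma~\ref{lem length to dom} gives $\length{\nu''}{\nu''_{\dom}} = \sum_{1\le p<q\le h}\max\{\nu''_p - \nu''_q, 0\}$. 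The plan is to first relate $\length{\nu'}{\nu''_{\dom}}$ to $\length{\nu''}{\nu''_{\dom}}$ and a correction term coming from $[-\nu'] - [-\nu'']$, which measures the change in the bracket when we replace $\nu''$ by $\nu'$ (both having the same orbit sum, so the difference is well-defined and independent of lifts by Lemma~\ref{lem trivial facts}(2)).

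\textbf{Key steps.} (1) Write $\length{\nu'}{\nu''_{\dom}} = \length{\nu'}{\nu''} + \length{\nu''}{\nu''_{\dom}}$ whenever $\nu' \preceq \nu'' \preceq \nu''_{\dom}$ — but here $\nu' \preceq \nu''$ need not hold, so instead I would use the cocycle-type identity $[-\nu'] + [\nu''_{\dom}] = ([-\nu'] + [\nu'']) + ([-\nu''] + [\nu''_{\dom}]) - ([-\nu''] + [\nu''])$, i.e.\ $\length{\nu'}{\nu''_{\dom}} = \length{\nu'}{\nu''} + \length{\nu''}{\nu''_{\dom}} - \length{\nu''}{\nu''}$, and note $\length{\nu''}{\nu''} = [-\nu''] + [\nu''] = 0$ since for any $\nu$, $[-\nu] + [\nu]$ counts... actually this vanishes only up to the standard correction, so I should instead track $\length{\nu'}{\nu''}$ directly. (2) Compute $\length{\nu'}{\nu''} = [\nu' - \nu'']$ — but $\nu'$ need not be $\preceq \nu''$; however $\nu''_{\dom} \preceq \nu'$ does hold since moving mass from the large coordinate $\nu'_i$ to the small coordinate $\nu'_j$ (recall $i \le j$ so $\nu'_i \le \nu'_j$, wait — $\nu'$ dominant means $\nu'_i \le \nu'_j$, so we are moving mass to make it \emph{more} spread out). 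So actually $\nu' \preceq \nu''$ fails and instead $\nu''_{\dom}$ dominates $\nu'$ in the $\preceq$ order, giving $\length{\nu'}{\nu''_{\dom}} = [\nu''_{\dom} - \nu']$ by Lemma~\ref{lem trivial facts}(1) applied with the roles reversed ($\nu' \in \ZZ^h$). (3) Now it remains to count lattice points between $\calP(\nu'')$ and $\calP(\nu''_{\dom})$ that... no — count the lattice points on or below $\calP(\nu')$ and strictly above $\calP(\nu''_{\dom})$, via the polygon interpretation. Since $\nu'$ and $\nu''$ differ only in coordinates $i$ and $j$, the polygon $\calP(\nu'')$ sits below $\calP(\nu')$ on the interval $[i-1, j]$ (the segment is bent downward by $\beta$ at position $i$ and back up at position $j$) and agrees elsewhere; sorting $\nu''$ to $\nu''_{\dom}$ only moves things further down in that range. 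A careful bookkeeping of which integer points $(n, y)$ with $\nu'_i - \beta \le y \le \nu'_j - 1$ get newly covered, organized by the "column" index $n$ where the value $y$ first appears as a coordinate of $\nu'$, yields the double sum $\sum_{k=1}^\beta \sum_{l=\nu'_i-\beta}^{\nu'_j-1} |\{n : \nu'_n = k+l\}|$, and the final $-\beta$ corrects for the endpoint/boundary convention (points lying exactly on $\calP(\nu')$ versus the $h-1$ interior weights, or equivalently overcounting along the $\beta$ diagonal levels).

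\textbf{Main obstacle.} The delicate part is the precise combinatorial count in step (3): keeping straight the "on or below versus strictly above" conventions in $\length{\cdot}{\cdot}$, correctly identifying the region swept out when both the transfer $\nu' \rightsquigarrow \nu''$ and the subsequent dominance-sorting $\nu'' \rightsquigarrow \nu''_{\dom}$ are performed, and verifying that the multiplicities $|\{n : \nu'_n = k+l\}|$ assemble exactly as stated with the clean $-\beta$ correction. I expect this to require drawing the polygon picture explicitly (as in the figure after Lemma~\ref{lem trivial facts}) and arguing that reordering $\nu''$ into $\nu''_{\dom}$ contributes, via Lemma~\ref{lem length to dom} applied to the at-most-$\binom{j-i+1}{2}$ relevant inversions, precisely the terms where $\nu'_n$ lands in the shifted window; the arithmetic then collapses by re-indexing $k + l$ over the range $[\nu'_i - \beta + 1, \nu'_j - 1 + \beta]$ with the correct multiplicity weights. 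An alternative, cleaner route I would try first is to induct on $\beta$: the case $\beta = 0$ is trivial, and the step from $\beta$ to $\beta+1$ moves a single unit of mass, for which the change in $\length{\nu'}{\nu''_{\dom}}$ is computed by a single application of the "swap reduces the bracket by the coordinate difference" observation from the proof of Lemma~\ref{lem length to dom}, at which point the inner sum $\sum_{n} |\{n : \nu'_n = (\text{new level})\}| - 1$ appears directly and telescopes.
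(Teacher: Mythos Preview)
Your confusion in Key Step~(1) is the main issue, and it makes you abandon precisely the approach the paper uses. You write that ``$\nu' \preceq \nu''$ need not hold'' and later that it ``fails''; in fact it \emph{does} hold. Since $\nu'$ is dominant and $i\le j$, we have $\nu'_i \le \nu'_j$; subtracting $\beta$ from the smaller early coordinate and adding $\beta$ to the larger later coordinate lowers all partial sums $\sum_{k=1}^{r}\nu''_k$ for $i\le r<j$ and leaves the others unchanged, so $\calP(\nu')$ lies on or above $\calP(\nu'')$ and $\nu' \preceq \nu''$. Moreover, even without this observation, the additivity $\length{\nu'}{\nu''_{\dom}} = \length{\nu'}{\nu''} + \length{\nu''}{\nu''_{\dom}}$ follows immediately from $[\nu''] + [-\nu''] = 0$, which holds because $\nu'' \in \ZZ^h$ makes every $\langle \omega_k,\nu''\rangle$ an integer. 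Your hesitation (``this vanishes only up to the standard correction'') is unfounded.

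The paper's proof is exactly this two-term decomposition. One computes $\length{\nu'}{\nu''} = (j-i)\beta$ directly from the definition (only the partial sums at positions $i,\ldots,j-1$ change, each by $\beta$), rewrites this as $(\sum_{i\le n\le j}\beta)-\beta$, and then applies Lemma~\ref{lem length to dom} to $\nu''$ to get
\[
 \length{\nu''}{\nu''_{\dom}} = \sum_{n<i:\ \nu'_n>\nu'_i-\beta}(\nu'_n-(\nu'_i-\beta)) + \sum_{n>j:\ \nu'_n<\nu'_j+\beta}(\nu'_j+\beta-\nu'_n),
\]
since the only inversions in $\nu''$ involve position $i$ against some $n<i$, or position $j$ against some $n>j$. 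Adding these two contributions and reorganising by the value $\nu'_n$ gives the double sum over $k,l$ with the $-\beta$ correction. Your polygon-counting route and your induction on $\beta$ could both be made to work, but they are detours around a two-line computation that you already had in hand.
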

 \begin{proof}
  Obviously we have
  \[
   \length{\nu'}{\nu''} = (j-i)\cdot \beta = \left( \sum_{i \leq n \leq j} \beta \right) - \beta
  \]
  and by the previous lemma
  \[
   \length{\nu''}{\nu''_{\dom}} = \sum_{n < i:\, \nu'_i-\beta< \nu'_n}  (\nu'_n-(\nu'_i-\beta)) + \sum_{n>j:\, \nu'_n < \nu'_j + \beta} (\nu'_j + \beta - \nu'_n).
  \]
  Using $\length{\nu'}{\nu''_{\dom}} = \length{\nu'}{\nu''} + \length{\nu''}{\nu''_{\dom}}$ one easily deduces the above assertion.
 \end{proof}

 \begin{corollary} \label{cor length of transfer}
  Let $\nu',\nu'' \in \ZZ^d$ be dominant with $\nu' \preceq \nu''$ such that the multiset of their coordinates differs by only two elements. Say $n_2,n_3$ in the multiset of coordinates of $\nu'$ are replaced by $n_1,n_4$ in the multiset of coordinates of $\nu''$ with $n_1 \leq n_2 \leq n_3 \leq n_4$. Then
  \[
   \length{\nu'}{\nu''} = \left( \sum_{k=0}^{n_4-n_3-1} \sum_{l=0}^{n_4-n_2-1} |\{n;\, \nu'_n = n_4-k-l-1\}| \right) +n_1 - n_2 .
  \]
 \end{corollary}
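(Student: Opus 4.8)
The plan is to reduce directly to Lemma~\ref{lem length of transfer} and then identify the resulting double sum with the one claimed here by means of an elementary symmetry of lattice-point counts. Since $\nu'\preceq\nu''$ includes equality of the total sums, and $\nu',\nu''$ agree outside the two listed coordinates, one gets $n_1+n_4 = n_2+n_3$; set $\beta := n_2-n_1 = n_4-n_3\in\ZZ_{\geq 0}$. Because $\nu'$ is dominant and $n_2\leq n_3$, choose indices $i<j$ with $\nu'_i = n_2$ and $\nu'_j = n_3$: this is possible since both values occur among the coordinates of $\nu'$, and in the case $n_2=n_3$ that value occurs with multiplicity at least two. The vector obtained from $\nu'$ by subtracting $\beta$ in position $i$ and adding $\beta$ in position $j$, exactly as in Lemma~\ref{lem length of transfer}, then has the same multiset of coordinates as $\nu''$, so its dominant rearrangement equals $\nu''$ by uniqueness of the dominant representative; consequently $\length{\nu'}{\nu''}$ is the quantity computed by that lemma.

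Lemma~\ref{lem length of transfer} then gives
\[
 \length{\nu'}{\nu''} = \left(\sum_{k=1}^{\beta}\sum_{l=n_1}^{n_3-1}|\{n;\, \nu'_n = k+l\}|\right) - \beta ,
\]
and $-\beta = n_1-n_2$ is already the additive constant in the asserted formula, so it remains to match the two double sums. Performing the substitution $k\mapsto k-1$, $l\mapsto l-n_1$ on the left, and using $n_4-n_3-1 = \beta-1$ together with $n_4-n_2-1 = n_3-n_1-1$ on the right, both sides take the form $\sum_{v\in\ZZ} |\{n;\, \nu'_n = v\}|\cdot w(v)$, where on the left $w(v)$ counts the pairs $(a,b)$ with $0\leq a\leq\beta-1$, $0\leq b\leq n_3-n_1-1$ and $a+b = v-1-n_1$, whereas on the right $w(v)$ counts the same admissible pairs with instead $a+b = n_4-1-v$.

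The decisive point is the symmetry that, for fixed bounds $A,B\geq 0$, the number of pairs $(a,b)$ with $0\leq a\leq A$, $0\leq b\leq B$ and $a+b = M$ is invariant under $M\mapsto (A+B)-M$ (send $a\mapsto A-a$, $b\mapsto B-b$). Here $A+B = (\beta-1)+(n_3-n_1-1) = n_4-n_1-2$, and $(v-1-n_1)+(n_4-1-v) = n_4-n_1-2$ as well, so the two weight functions coincide for every $v$ and the sums agree, which proves the corollary. I expect the only places demanding a little care are the selection of valid indices $i<j$ when $n_2=n_3$ (and the degenerate case $\beta=0$, where both sides vanish), and keeping the index shifts bookkept correctly; the underlying combinatorial symmetry itself is immediate.
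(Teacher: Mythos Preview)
Your proof is correct and follows exactly the approach intended by the paper, which merely states that the corollary ``is just a reformulation of the previous lemma'' without giving details. You have spelled out the reindexing and the rectangle symmetry $(a,b)\mapsto(A-a,B-b)$ that make this reformulation precise; nothing further is needed.
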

 \begin{proof}
  The assertion is just a reformulation of the previous lemma.
 \end{proof}

 \section{Extended EL-charts} \label{sect EL-charts}
 In order to calculate the dimension of the affine Deligne-Lusztig variety, we decompose $X_\mu^0 (b)$ as follows. Denote
 \begin{eqnarray*}
  \calI_\tau: N_\tau \setminus \{ 0\} \quad &\rightarrow& \ZZ \\
  \sum_{n \gg -\infty} a_n\cdot e_{\tau,n} &\mapsto& \min \{n\in\ZZ; a_n \not= 0\}.
 \end{eqnarray*} 
 Note that $\calI_\tau$ satisfies the strong triangle inequality for every $\tau$. We denote $N_{hom} := \coprod_{\tau \in I} (N_\tau\setminus \{0\})$, analogously $M_{hom}$, and define the index map
 \[
  \calI := \sqcup\, \calI_\tau : N_{hom} \rightarrow  \coprod_{\tau\in I} \ZZ
 \]
  For $M \in X_\mu (b)^0 (\overline{k})$, we define
 \[
  A(M) := \calI (M_{hom})
 \]
 and a map $\varphi(M):  \coprod_{\tau\in I} \ZZ \rightarrow \ZZ\cup\{-\infty\}$ such that for $a \in A(M)$
 \[
  \varphi(M)(a) = \max\{n\in \NN_0;\, \exists v \in M_{hom} \text{ with } \calI (v) = a, t^{-n}b\sigma (v) \in M_{hom} \} 
 \]
 and $\varphi(M)(a) = - \infty$ otherwise.
 Now we decompose $X_\mu (b)^0$ such that $(A(M), \varphi(M))$ is constant on each component. We will discuss the properties  of this decomposition in section \ref{sect decomposition}. In this section we give a description of the invariants $(A,\varphi)$.

 \begin{definition}
  Let $\ZZ^{(d)} := \coprod_{\tau \in I} \ZZ_{(\tau)}$ be the disjoint union of $d$ isomorphic copies of $\ZZ$. For $a\in\ZZ$ we denote by $a_{(\tau)}$ the corresponding element of $\ZZ_{(\tau)}$ and write $|a_{(\tau)}| := a$. We equip $\ZZ^{(d)}$ with a partial order ``$\leq$'' defined by
  \[
   a_{(\tau)} \leq c_{(\varsigma)} :\Lra a\leq c \textnormal{ and } \tau = \varsigma
  \]
  and a $\ZZ$-action given by
  \[
   a_{(\tau)} + n = (a+n)_{(\tau)}.
  \]
  Furthermore we define a function $f:\ZZ^{(d)} \rightarrow \ZZ^{(d)}, a_{(\tau)} \mapsto (a+m_{\tau+1})_{(\tau+1)}$.
 \end{definition}

  We impose the notation that for any subset $A \subset \ZZ^{(d)}$ we write $A_{(\tau)} := A\cap  \ZZ_{(\tau)}$.
 
 \begin{definition}
  \begin{enumerate}
   \item Let $d,h$ be positive integers and $(m_\tau)_{\tau \in I} \in \ZZ^d$ such that $m := \sum_{\tau\in I} m_\tau$ and $h$ are coprime, let $f$ be defined as above. An \emph{EL-chart} for $(\ZZ^{(d)},f,h)$ is a nonempty subset $A \subset \ZZ^{(d)}$ which is bounded from below, stable under $f$ and satisfies $A+h \subset A$.
   \item Let $A$ be an EL-chart and $B = A\setminus (A+h)$. We say that $A$ is \emph{normalized} if $\sum_{b_{(0)} \in B_{(0)}} b = \frac{h\cdot (h-1)}{2}$.
  \end{enumerate}
 \end{definition}

 Our next aim is to give a characterization of EL-charts. For this let $A$ be an EL-chart and $B := A\setminus(A+h)$. Obviously $|B| = d\cdot h$. We define a sequence $b_0, \ldots b_{d\cdot h -1}$ of distinct elements of $B$ as follows. Denote by $b_0$ the minimal element of $B_{(0)}$. If $b_i$ is already defined, we denote by $b_{i+1}$ the unique element which can be written as
 \[
  b_{i+1} = f(b_i) - \mu'_{i+1}\cdot h
 \]
 for some $\mu'_{i+1} \in \ZZ$. These elements are indeed distinct: If $b_i = b_j$ then obviously $i \equiv j \mod d$ and then $b_{i + k\cdot d} \equiv b_i + k\cdot m \mod h$ implies that $i=j$ as $m$ and $h$ are coprime. This reasoning also shows that if we define $b_{d\cdot h}$ according to the recursion formula above, we get $b_{d\cdot h} = b_0$. Therefore we will consider the index set of the $b_i$ and $\mu'_i$ as $\ZZ/dh\ZZ$.
 We define
 \[
  \suc (b_i) := b_{i+1}
 \]
 and call $\mu' = (\mu'_i)_{i=1,\ldots, d\cdot h}$ the type of $A$.

 At some point, it may be helpful to distinguish the $b_i$'s and $\mu'_i$'s of different components. For this we may change the index set to $I \times \{1, \ldots ,h\}$ via
 \begin{eqnarray*}
  b_{\tau,i} &:=& b_{\tau + (i-1)d} \\
  \mu'_{\tau,i} &:=& \left\{ \begin{array}{ll} \mu'_{\tau + (i-1)d} & \textnormal{ if } \tau \not= 0 \\ \mu'_{id} & \textnormal{ if } \tau=0 \end{array}\right. .
 \end{eqnarray*}
 Here we choose that standard set of representatives $\{0,\ldots,d-1\} \subset \ZZ$ for $I$.

 With the change of notation we have that $b_{\tau,i} \in B_{(\tau)}$ for all $i,\tau$ and that $b_{0,1}$ is the minimal element of $B_{(0)}$ and we have the recursion formula
 \begin{eqnarray*}
  b_{\tau+1,i} &=& f(b_{\tau,i}) - \mu'_{\tau+1,i}h \quad\quad\textnormal{ if } \tau \not= d-1 \\
  b_{0,i} &=& f(b_{d-1,i-1}) - \mu'_{0, i-1}h.
 \end{eqnarray*}

 \begin{lemma}
  \begin{enumerate}
   \item For every EL-chart $A$ there exists a unique integer $n$ such that $A+n$ is normalized.
   \item Mapping an EL-chart to its type induces a bijection between normalized EL-charts and the set $\{\mu' \in\prod_{\tau\in I} \ZZ^h; \orbitsum{\mu}' \succeq \orbitsum{\nu}\}$.
  \end{enumerate}
 \end{lemma}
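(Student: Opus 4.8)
The plan is to work directly with the finite set $B := A\setminus (A+h)$ and the sequence $b_0,b_1,\dots$ attached to $A$, extracting the type from a closed formula for the $b_i$.

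\emph{Part (1).} First I would show that $B_{(0)}$ is a complete set of residues modulo $h$. The set $A_{(0)} := A\cap\ZZ_{(0)}$ is bounded below and stable under $+h$ (from $A+h\subseteq A$) and under $+m$, since $f^{d}$ acts on $\ZZ_{(0)}$ as translation by $m=\sum_{\tau}m_{\tau}$; boundedness below then forces $m>0$ (if $m=0$ then $h=1$ by coprimality, a trivial case). As $\gcd(m,h)=1$, iterating $+m$ shows $A_{(0)}$ meets every residue class modulo $h$, so $B_{(0)}$ consists of exactly the minimal element of $A_{(0)}$ in each class; hence $\sum_{b_{(0)}\in B_{(0)}}b\equiv 0+1+\dots+(h-1)=\tfrac{h(h-1)}{2}\pmod{h}$. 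Since replacing $A$ by $A+n$ replaces $B_{(0)}$ by $B_{(0)}+n$, this sum changes by $nh$, so there is exactly one $n$ making $A+n$ normalized.

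\emph{Part (2).} The key identity is $b_{j}=f^{j}(b_{0})-(\mu'_{1}+\dots+\mu'_{j})\,h$ for all $j$, proved by induction from the recursion using $f(x-lh)=f(x)-lh$. Combined with the fact that $f^{dh}$ is translation by $mh$, it shows that the recursion closes up, $b_{dh}=b_{0}$, precisely when $\sum_{i=1}^{dh}\mu'_{i}=m$; the same identity and $\gcd(m,h)=1$ re-prove that the $b_{i}$ are pairwise distinct. Moreover each $\mu'_{i}\geq 0$: since $b_{i-1}\in B\subseteq A$ and $A$ is $f$-stable, $f(b_{i-1})\in A$ and $f(b_{i-1})\equiv b_{i}\pmod{h}$, while $b_{i}\in B$ is minimal in $A$ in its residue class, so $f(b_{i-1})\geq b_{i}$. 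Under the reindexing $\mu'_{\tau,i}$ the indices contributing to $\orbitsum{\mu'}_{i}=\sum_{\tau}\mu'_{\tau,i}$ form the block $\{(i-1)d+1,\dots,id\}$, so $\sum_{i'=1}^{k}\orbitsum{\mu'}_{i'}=\sum_{l=1}^{kd}\mu'_{l}$; since $\orbitsum{\nu}=(\tfrac{m}{h},\dots,\tfrac{m}{h})$, the condition $\orbitsum{\mu'}\succeq\orbitsum{\nu}$ unwinds to $\sum_{l=1}^{kd}\mu'_{l}\leq\tfrac{km}{h}$ for $1\leq k<h$ together with $\sum_{l=1}^{dh}\mu'_{l}=m$. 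For a normalized EL-chart one has $b_{0}=\min B_{(0)}$, hence $b_{kd}\geq b_{0}$, and since $b_{kd}-b_{0}=km-(\mu'_{1}+\dots+\mu'_{kd})\,h$ these inequalities hold, so the type lands in the target set. Conversely, given such a $\mu'$, I would solve $\sum_{k=0}^{h-1}b_{kd}=\tfrac{h(h-1)}{2}$ for $b_{0}$ (an integer: $b_{0}=\tfrac{h-1}{2}(1-m)+\sum_{k=0}^{h-1}\sum_{l=1}^{kd}\mu'_{l}$, using $\tfrac{h-1}{2}(1-m)\in\ZZ$), define the $b_{j}$ by the closed formula, and put $A:=\bigsqcup_{j}\{\,b_{j}+lh:l\geq 0\,\}$; then $A$ is an EL-chart ($f$-stability uses $\mu'_{i}\geq 0$), it is normalized with $B=\{b_{0},\dots,b_{dh-1}\}$, and $b_{0}=\min B_{(0)}$ by the partial-sum inequalities, so its type is $\mu'$. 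These two assignments are mutually inverse, which gives the bijection in (2).

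I expect the only genuinely delicate point to be this last reconstruction, namely verifying that the partial-sum inequalities are not merely necessary but \emph{sufficient} for the set $A$ built from $\mu'$ to be $f$-stable and to have $b_{0}$ as the minimum of $B_{(0)}$; the non-negativity of the $\mu'_{i}$ and the closed formula reduce this to a short bookkeeping check. Part (1), the closed formula, and the forward direction of (2) are comparatively immediate once the identity $b_{j}=f^{j}(b_{0})-(\mu'_{1}+\dots+\mu'_{j})h$ is established.
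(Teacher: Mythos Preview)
Your argument follows the same path as the paper's: part~(1) via the residue count in $B_{(0)}$, and part~(2) via the identity $b_{kd}-b_0=km-h\sum_{i\leq k}\orbitsum{\mu'}_i$, which converts ``$b_0$ is minimal in $B_{(0)}$'' into the partial-sum inequalities defining $\orbitsum{\mu'}\succeq\orbitsum{\nu}$. You supply more detail than the paper (the explicit integer formula for $b_0$, the observation $\mu'_i\geq 0$ in the forward direction), but the strategy is identical.

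There is, however, a real gap precisely where you flagged the argument as ``delicate''. In the reconstruction step you invoke $\mu'_i\geq 0$ to obtain $f$-stability of the set $A=\bigsqcup_j\{b_j+lh:l\geq 0\}$, but you only established $\mu'_i\geq 0$ in the \emph{forward} direction, for types of actual EL-charts; nothing in the hypothesis $\orbitsum{\mu'}\succeq\orbitsum{\nu}$ forces the individual coordinates $\mu'_i$ to be non-negative. Concretely, take $d=1$, $h=2$, $m=1$: then $\orbitsum{\nu}=(\tfrac12,\tfrac12)$ and $\mu'=(-1,2)$ lies in the stated target set, yet the only normalized EL-chart is $\ZZ_{\geq 0}$, whose type is $(0,1)$; the set built from $\mu'=(-1,2)$ fails $f$-stability since $f(b_0)=b_1-h\notin A$. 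So surjectivity onto the set in the lemma fails, and your ``short bookkeeping check'' cannot succeed as written: the target set needs the additional constraint $\mu'_{\tau,i}\geq 0$. The paper's own proof glosses over this same verification, so the defect lies in the statement rather than in your method.
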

 
 \begin{proof}
  (1) In order to obtain a normalized EL-chart, we have to choose $n = \frac{1}{h} \cdot \left(\frac{h\cdot (h-1)}{2} - \sum_{b_{(0)} \in B_{(0)}} b\right)$. Since by definition every residue modulo $h$ occurs exactly once in $B_{(0)}$, this is indeed an integer. \\ \smallskip
  (2) Since an EL-chart $A$ is uniquely determined by $A\setminus (A+h)$ which is, up to $\ZZ$-action, uniquely determined by the type of $A$, we know the type induces an injection on the set of normalized EL-charts into $\prod_{\tau\in I} \ZZ^h$. The condition $ b_0 = \min\{b_{k\cdot d} \mid 1\leq k \leq h-1\}$ translates to the condition $\orbitsum{\mu}' \succeq \orbitsum{\nu}$ on the type, which is shown by the following computation. For $1\leq k\leq h-1$ we have
  \begin{equivarray}
   & b_0 \leq b_{kd} \\
   \Lra & b_0 \leq b_0 + k\cdot m - \sum_{i=1}^{k} (\orbitsum{\mu}'_i) \cdot h \\
   \Lra & \sum_{i=1}^k \orbitsum{\mu}'_i \leq k\cdot \frac{m}{h} \\
   \Lra & \sum_{i=1}^k \orbitsum{\mu}'_i  \leq \sum_{i=1}^k \orbitsum{\nu}_i
  \end{equivarray}
  Similarly one shows the equivalence of $b_0 = b_{hd}$ and $\sum_{i=1}^h \orbitsum{\mu}'_i  = \sum_{i=1}^h \orbitsum{\nu}_i$. Thus if $\mu'$ is the type of an EL-chart, we have $\orbitsum{\mu}' \succeq \orbitsum{\nu}$. On the other hand this also shows that any such $\mu'$ is the type of some EL-chart and thus by (1) also the type of a normalized EL-chart.
 \end{proof}

 \begin{definition}
  For $a\in A$ we call $\height (a) := \max \{n\in\NN_0; a-n\cdot h \in A\}$ the \emph{height of $a$}.
 \end{definition}

 \begin{definition} \label{def extended EL-chart}
  \begin{enumerate}
   \item An \emph{extended EL-chart} is a pair $(A,\varphi)$ where $A$ is a normalized EL-chart and $\varphi: \ZZ^{(d)} \rightarrow \NN_0\cup\{-\infty\}$ such that the following conditions hold for every $a\in\ZZ^{(d)}$.
   \begin{enumerate}
    \item $\varphi(a) = -\infty$ if and only if $a\not\in A$.
    \item $\varphi(a+h) \geq \varphi (a) +1$.
    \item $\varphi(a) \leq \height f(a)$ if $a \in A$ with equality if $\{c\in\ZZ^{(d)};\, c \geq a \} \subset A$.
    \item $ |\{c \in \ZZ^{(d)};\, c \geq a \} \cap \varphi^{-1} ( \{n\} )| \leq |\{c\in\ZZ^{(d)};\, c \geq a+h \} \cap \varphi^{-1} (\{ n+1\} )|$ for all $n\in \NN_0$.
   \end{enumerate}
   \item An extended EL-chart is called \emph{cyclic} if equality holds in (c) for every $a\in A$.
   \item The \emph{Hodge-point} of an extended EL-chart $(A,\varphi)$ is the dominant cocharacter $\mu'' \in \prod_{\tau\in I} \ZZ^h$ for which the coordinate $n$ occurs with multiplicity $|A_{(\tau)} \cap \varphi^{-1} (\{ n \})| - |A_{(\tau)} \cap \varphi^{-1} (\{ n-1 \})|$ in $\mu''_\tau$. We also say that $(A,\varphi)$ is an extended EL-chart for $\mu''$.
  \end{enumerate}
 \end{definition}

 \begin{remark}
 We point out that as an obvious consequnce of condition (d) we have for any integer $n$
 \[
 |A_{(\tau)} \cap \varphi^{-1} (\{ n \})| - |A_{(\tau)} \cap \varphi^{-1} (\{ n-1 \})| \geq 0,
 \]
 thus the construction of the Hodge point described above is feasible.
  Because of condition (c) we have $|A_{(\tau)} \cap \varphi^{-1}( \{n\} )| = h$ for every $\tau\in I$ and sufficiently large $n$. Hence the Hodge point is indeed an element of $\prod_{\tau\in I} \ZZ^h$.
 \end{remark}

 Except for condition (d) the definition of an EL-chart is obviously a generalization of Definition 3.4 in \cite{viehmann06}. As we will frequently refer to Viehmann's paper we give an equivalent condition for (d) which is easily seen to be a generalization of condition (4) in her definition. However, we will not use this assertion in the sequel.

 \begin{lemma} \label{lem adapted family}
  For every $(A,\varphi)$ satisfying conditions (a)-(c) of Definition \ref{def extended EL-chart}, (d) may equivalently be replaced with the following condition. For every $\tau$, we can write $A_{(\tau)} = \bigcup_{l=1}^h \{ a^{\tau,l}_j \}_{j=0}^\infty$ with
   \begin{enumerate}
    \item[(a)] $\varphi (a^{\tau,l}_{j+1}) = \varphi (a^{\tau,l}_j) + 1$
    \item[(b)] If $\varphi(a^{\tau,l}_j + h) = \varphi(a^{\tau,l}_j)+1$, then $a^{\tau,l}_{j+1} = a^{\tau,l}_j+h$, otherwise $a^{\tau,l}_{j+1} > a^{\tau,l}_j+h$.
   \end{enumerate}
  Then the Hodge point is the dominant cocharacter associated to $(a^{\tau,l}_0)_{l=1,\ldots, h \atop \tau \in I}$.
 \end{lemma}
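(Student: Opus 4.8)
\textbf{Proof plan for Lemma \ref{lem adapted family}.}

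The plan is to show directly that, given (a)--(c), condition (d) is equivalent to the existence of such a decomposition of each $A_{(\tau)}$ into $h$ chains with the prescribed behaviour, and then to identify the Hodge point. Since $f$ induces bijections $A_{(\tau)} \to A_{(\tau+1)}$ compatible with the structure, and since the whole setup is $f$-equivariant, it suffices to work one component $A_{(\tau)}$ at a time; I will fix $\tau$ and write $A' := A_{(\tau)} \subset \ZZ_{(\tau)} \cong \ZZ$, identifying it with a subset of $\ZZ$ that is bounded below, contains $A'+h$, and equals $\ZZ_{\geq c}$ above some threshold $c$. On $A'$ we have the function $\varphi$ taking values in $\NN_0$, strictly increasing along the shift $a \mapsto a+h$ by at least $1$ (condition (b)), eventually equal to a linear-in-$\lfloor \cdot/h\rfloor$ function by (c), and I must organise $A'$ into $h$ increasing sequences $(a^{\tau,l}_j)_{j\geq 0}$ on each of which $\varphi$ increases by exactly $1$ at each step and the step is $+h$ precisely when that is compatible with $\varphi$.

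First I would reformulate (d) component-wise: for a fixed $n \in \NN_0$, write $\alpha_n := |\{c \in \ZZ^{(d)};\, c \geq a\} \cap \varphi^{-1}(\{n\})|$ for the relevant "tail count" at level $n$; condition (d) says precisely that for all $a$ and all $n$, the tail of $A$ above $a$ at level $n$ injects into the tail above $a+h$ at level $n+1$. Restricting to one component and to elements $\geq$ some $a_{(\tau)}$, this is exactly the combinatorial condition needed to run a greedy/matching argument (Hall-type, or simply an explicit order-preserving matching since both sides are totally ordered subsets of $\ZZ$): for each $n$, the order-preserving injection $\varphi^{-1}(\{n\})_{\geq a} \hookrightarrow \varphi^{-1}(\{n+1\})_{\geq a+h}$ can be chosen consistently as $a$ varies (it is forced to be the unique order-preserving injection), and gluing these injections over all $n$ produces a "successor" map $s$ on $A'$ with $\varphi(s(a)) = \varphi(a)+1$ and $s(a) \geq a+h$, with equality exactly when $a+h$ is available, i.e. when $\varphi(a+h) = \varphi(a)+1$. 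The orbits of $s$ are then the desired chains $\{a^{\tau,l}_j\}_{j\geq 0}$; there are exactly $h$ of them per component because, by (c), for large $n$ the fibre $\varphi^{-1}(\{n\})_{(\tau)}$ has exactly $h$ elements and $s$ restricts to a bijection between consecutive large fibres. Conversely, given such a decomposition, summing the defining inequality $a^{\tau,l}_{j+1} \geq a^{\tau,l}_j + h$ with $\varphi$-increments equal to $1$ over the chains recovers the tail-count inequality (d).

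For the last sentence — that the Hodge point is the dominant cocharacter attached to $(a^{\tau,l}_0)_{l,\tau}$ — I would argue as follows. By construction of $s$, for each level $n$ the new elements of $\varphi^{-1}(\{\leq n\})_{(\tau)}$ not already reached from level $n-1$ are precisely the chain-starts $a^{\tau,l}_0$ with $\varphi(a^{\tau,l}_0) = n$; hence $|A_{(\tau)} \cap \varphi^{-1}(\{n\})| - |A_{(\tau)} \cap \varphi^{-1}(\{n-1\})|$ equals the number of indices $l$ with $\varphi(a^{\tau,l}_0) = n$. By definition of the Hodge point (Definition \ref{def extended EL-chart}(3)), this is exactly the multiplicity of $n$ among the coordinates of $\mu''_\tau$, which is also the multiplicity of $n$ among the values $\varphi(a^{\tau,l}_0)$, $l = 1,\dots,h$ — i.e. $\mu''_\tau$ is the dominant rearrangement of $(\varphi(a^{\tau,1}_0),\dots,\varphi(a^{\tau,h}_0))$, as claimed.

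The main obstacle I expect is the \emph{consistency} of the matchings across different base points $a$: I need one global successor map $s: A' \to A'$, not a different matching for each $a$. The resolution is to notice that on totally ordered sets the order-preserving injection between two tails is unique, so the matchings for different $a$ automatically agree on their common domain; the only real content is checking that condition (d) for \emph{all} $a$ (not just one) is what forces $s(a) \geq a + h$ (rather than merely $s(a) > a$) and pins down when equality holds — this is where one uses (b) together with (d) applied at $a$ and at $a+h$. A secondary technical point is the bookkeeping that ensures there are exactly $h$ chains and that they are indexed by $l = 1,\dots,h$ compatibly across $\tau$ via $f$; this follows from $|B_{(\tau)}| = h$ together with the stabilisation in (c), but it should be spelled out.
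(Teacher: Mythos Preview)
Your overall strategy --- build a ``successor'' map $s:A_{(\tau)}\to A_{(\tau)}$ level by level using (d), then take its orbits as the chains --- is exactly what the paper does (tersely: ``Condition (d) guarantees that we can continue all our sequences such that they satisfy (a) and (b)''). Your treatment of the converse direction and of the Hodge point is correct and in fact more explicit than the paper's.

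However, your proposed construction of $s$ has a genuine flaw. You claim the order-preserving injection between the tails $\varphi^{-1}(\{n\})_{\geq a}\hookrightarrow\varphi^{-1}(\{n+1\})_{\geq a+h}$ is unique and that this forces $s(a)=a+h$ whenever $\varphi(a+h)=\varphi(a)+1$. Neither is true. For uniqueness: if the target tail is strictly larger, there are several order-preserving injections. More importantly, even the canonical (leftmost/greedy) one need not respect property (b). Take $h=10$, $X_n:=\varphi^{-1}(\{n\})=\{0,2\}$, $X_{n+1}=\{12,13\}$ with $\varphi(10)>n+1$; then (d) holds for all $a$, but the unique order-preserving bijection sends $0\mapsto 12$, $2\mapsto 13$, whereas (b) forces $s(2)=12$. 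The correct assignment is the \emph{crossing} one $0\mapsto 13$, $2\mapsto 12$.

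The fix, and what the paper implicitly does, is: first set $s(x)=x+h$ for all ``good'' $x$ (those with $\varphi(x+h)=\varphi(x)+1$), then match the remaining ``bad'' $x$ to unused elements of $X_{n+1}$ strictly above $x+h$. Processing the bad elements in decreasing order, (d) applied at each bad $x$ shows there are at least $1+(\text{bad elements above }x)$ non-forced targets above $x+h$, so one is always available. Once you drop the order-preservation requirement your argument goes through unchanged; in particular your consistency worry disappears, since $s$ is now defined globally in one pass rather than glued from tail-by-tail injections.
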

 \begin{proof}
  If we have a decomposition of $A$ as above, it is obvious that (d) is true. Now let $(A,\varphi)$ be an extended EL-chart. We construct the sequences $(a^{\tau,l}_j)_{j\in\NN}$ separately for each $\tau$. So fix $\tau\in I$. We construct the sequences by induction on the value of $\varphi$. Take every element of $A$ for which $\varphi$ has minimal value as initial element for some sequence. Now if we have sorted all elements $a\in A$ with $\varphi (a) \leq n$ in sequences $(a^{\tau,l}_j)_{j\in\NN}$ we proceed as follows. Condition (d) of Definition \ref{def extended EL-chart} guarantees that we can continue all our sequences such that they satisfy (a) and (b). If there are still some $a\in A$ with $\varphi (a) = n+1$ which are not already an element of a sequence, we take them as initial objects for some sequences. Since $|\varphi^{-1} (n) \cap A_{(\tau)}| = h$ for $n \gg 0$, we get indeed $h$ sequences.
 \end{proof}

 \begin{lemma} \label{lem cyclic EL-chart}
  Let $A$ be an EL-chart of type $\mu'$. There exists a unique $\varphi_0$ such that $(A,\varphi_0)$ is a cyclic extended EL-chart. The Hodge point of $(A,\varphi_0)$ is $\mu'_{\dom}$.
 \end{lemma}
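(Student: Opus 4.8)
The plan is to construct $\varphi_0$ explicitly and verify it has the required properties, then read off its Hodge point. Recall that the EL-chart $A$ is generated under $f$ and under $+h$ by the set $B = A \setminus (A+h)$, which splits into the $dh$ distinct elements $b_0, \dots, b_{dh-1}$ with $b_{i+1} = f(b_i) - \mu'_{i+1}h$; each $a \in A$ has a well-defined height $\height(a)$. The candidate for a \emph{cyclic} extended EL-chart is the one forced by condition (c) with equality everywhere: I will define $\varphi_0$ on $A$ by demanding $\varphi_0(a) = \height f(a)$ for every $a \in A$, and $\varphi_0(a) = -\infty$ for $a \notin A$. Because $A$ is $f$-stable this is well-defined, and uniqueness is immediate: cyclicity by definition means equality in (c) for all $a \in A$, so $\varphi_0$ is the only candidate. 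The real content is that this $\varphi_0$ actually satisfies (a)--(d) of Definition \ref{def extended EL-chart}.

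Conditions (a) and (c) are built in: (a) holds since $a \in A \iff f(a) \in A$, and (c) is equality by construction (one has to note that $\{c \geq a\} \subset A$ forces $\{c \geq f(a)\} \subset A$, using that $f$ is order-preserving on each component and shifts component $\tau$ to $\tau+1$, so the equality clause of (c) is consistent with what we imposed). For (b), $\varphi_0(a+h) = \height f(a+h) = \height(f(a)+h) = \height f(a) + 1 = \varphi_0(a)+1 \geq \varphi_0(a)+1$, so (b) even holds with equality. The one condition requiring a genuine argument is (d). Here the natural approach is to use the reformulation in Lemma \ref{lem adapted family}: I would exhibit, for each $\tau$, an explicit decomposition $A_{(\tau)} = \bigcup_{l=1}^h \{a^{\tau,l}_j\}_{j \geq 0}$ satisfying (a),(b) of that lemma. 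The chains are the obvious ones: start the $l$-th chain at the appropriate element of $B_{(\tau)}$ (reindexed as $b_{\tau,l}$) and iterate the rule "if $a+h \in A$ with $\varphi_0(a+h) = \varphi_0(a)+1$ then go to $a+h$, else go to the smallest element of $A_{(\tau)}$ with $\varphi_0$-value one larger." Since in a cyclic chart $\varphi_0(a+h) = \varphi_0(a)+1$ always, in fact every chain is just $a^{\tau,l}_j = b_{\tau,l} + jh$, and one checks these $h$ chains partition $A_{(\tau)}$ because the $b_{\tau,l}$, $l=1,\dots,h$, are exactly the elements of $B_{(\tau)} = A_{(\tau)} \setminus (A_{(\tau)}+h)$. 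This decomposition manifestly satisfies the conditions of Lemma \ref{lem adapted family}, hence (d) holds.

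Finally, the Hodge point. By Lemma \ref{lem adapted family} the Hodge point of $(A,\varphi_0)$ is the dominant cocharacter associated to the tuple $(a^{\tau,l}_0)_{l,\tau} = (b_{\tau,l})_{l,\tau}$ of chain-initial elements, i.e.\ to the $\varphi_0$-values at the generators; more concretely, the multiplicity of $n$ in $\mu''_\tau$ is $|A_{(\tau)} \cap \varphi_0^{-1}(n)| - |A_{(\tau)} \cap \varphi_0^{-1}(n-1)|$, and each such difference counts exactly the $b_{\tau,l}$ with $\varphi_0(b_{\tau,l}) = \height f(b_{\tau,l}) = n$. It remains to identify this dominant cocharacter with $\mu'_{\dom}$, where $\mu'$ is the type of $A$. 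This is a direct computation with the recursion $b_{\tau+1,i} = f(b_{\tau,i}) - \mu'_{\tau+1,i}h$ (and the wrap-around variant at $\tau = 0$): $\height f(b_{\tau,i})$ measures how far $f(b_{\tau,i})$ sits above $B$ in the $+h$-direction, which by the recursion differs from $\height(b_{\tau+1,i}) = 0$ by exactly $\mu'_{\tau+1,i}$; chasing this around shows the multiset of values $\{\height f(b_{\tau,i})\}_i$ equals the multiset $\{\mu'_{\tau,i}\}_i$ for each $\tau$, hence the Hodge point is $\mu'_{\dom}$. The step I expect to be the main obstacle is verifying (d) cleanly — specifically, being careful that the chains $b_{\tau,l}+jh$ really do exhaust $A_{(\tau)}$ and that the order conditions in Lemma \ref{lem adapted family}(b) are met — and, parallel to that, pinning down the bookkeeping that turns $\{\height f(b_{\tau,i})\}$ into $\{\mu'_{\tau,i}\}$ without sign or index errors.
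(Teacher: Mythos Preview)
Your proposal is correct and follows essentially the same approach as the paper: define $\varphi_0$ by forcing equality in (c), observe the key identity $\varphi_0(a+h)=\varphi_0(a)+1$, and read off the Hodge point from $\varphi_0(b_i)=\mu'_{i+1}$. The one difference is that you route the verification of (d) through Lemma~\ref{lem adapted family}, whereas the paper simply notes that $\varphi_0(a+h)=\varphi_0(a)+1$ already gives (d) directly (the map $c\mapsto c+h$ injects $\{c\geq a\}\cap\varphi_0^{-1}(n)$ into $\{c\geq a+h\}\cap\varphi_0^{-1}(n+1)$), so your detour is unnecessary though harmless.
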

 \begin{proof}
  The function $\varphi_0: \ZZ^{(d)} \rightarrow \NN_0 \cup \{-\infty\}$ is uniquely determined by equality in (c) and condition (a). For any $a\in A$ we get $\varphi_0 (a+h) = \varphi_0 (a) +1$, which proves (b) and (d). The second assertion follows from $\mu'_{i+1} = \varphi_0 (b_i)$.
 \end{proof}
 
 The following construction will help us to deduce assertions for general extended EL-charts from the assertion in the cyclic case.

 \begin{definition} \label{def deformation}
  Let $(A,\varphi)$ be an extended EL-chart and $(A,\varphi_0)$ the cyclic extended EL-chart associated to $A$. For any $\tau \in I$ we denote
  \[
   \{x_{\tau,1},\ldots ,x_{\tau,n_\tau}\} = \{a\in A_{(\tau)};\, \varphi (a+h) > \varphi(a) + 1 \}
  \]
  where the $x_{\tau,i}$ are arranged in decreasing order. We write $\mathbf{n} := (n_\tau)_{\tau\in I}$. For $0 \leq \mathbf{i} \leq \mathbf{n}$ (i.e. $0 \leq i_\tau \leq n_\tau$ for all $\tau$) let
  \[
   \varphi_{\mathbf{i}} = \left\{ \begin{array}{ll} -\infty & \textnormal{if } a \not\in A, \\ \varphi_0 (a) & \textnormal{if } a\in A_{(\tau)} \textnormal{ and } i_\tau = 0 \\ \varphi(a) & \textnormal{if } a \in A_{(\tau)}, i_\tau > 0 \textnormal{ and } a \geq x_{\tau, i_\tau}, \\  \varphi(a+h)-1 & \textnormal{otherwise}. \end{array} \right.
  \]
  We call the family $(A,\varphi_{\mathbf{i}})_{0 \leq \mathbf{i} \leq \mathbf{n}}$ the \emph{canonical deformation} of $(A,\varphi)$.
 \end{definition}

 One easily checks that the $(A,\varphi_{\mathbf{i}})$ are indeed extended EL-charts (the properties (a)-(d) of Definition \ref{def extended EL-chart} follow from the analogous properties of $(A,\varphi)$) and that $\varphi_{\mathbf{i}} = \varphi_0$ for $\mathbf{i} = (0)_{\tau\in I}$ and $\varphi_{\mathbf{i}} = \varphi$ for $\mathbf{i} = \mathbf{n}$. Denote the Hodge-point of $(A,\varphi_{\mathbf{i}})$ by $\mu^{\mathbf{i}}$.

 We note that one can define the $\varphi_{\mathbf{i}}$ recursively. Let $\varsigma \in I$ and $0 \leq \mathbf{i} \leq \mathbf{i'} \leq \mathbf{n}$ with $i'_{\varsigma} = i_{\varsigma}+1$ and $i'_{\tau} = i_\tau$ for $\tau \not= \varsigma$. We denote $\alpha := \varphi(x_{\varsigma, i_\varsigma}+h) - (\varphi(x_{\varsigma, i_\varsigma}) + 1)$. Then
 \[
  \varphi_{\mathbf{i'}} (a) = \left\{ \begin{array}{ll}
                                         \varphi_{\mathbf{i}} (a) - \alpha & \textnormal{if } a = x_{\varsigma,i_\varsigma}, x_{\varsigma,i_\varsigma} - h, \ldots , x_{\varsigma,i_\varsigma} - \height (x_{\varsigma, i_\varsigma}) \cdot h \\
                                         \varphi_{\mathbf{i}} (a) & \textnormal{otherwise}
                                        \end{array} \right. 
 \]

 \begin{lemma} \label{lem comparison type Hodge}
  Let $(A,\varphi)$ be an extended EL-chart of type $\mu'$ with Hodge point $\mu$. Then $\mu'_{\dom} \preceq \mu$. Furthermore, we have $\mu'_{\dom} = \mu$ if and only if $(A,\varphi)$ is cyclic.
 \end{lemma}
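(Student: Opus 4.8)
The plan is to use the canonical deformation $(A,\varphi_{\mathbf{i}})_{0\le\mathbf{i}\le\mathbf{n}}$ from Definition~\ref{def deformation} to interpolate between the cyclic case and the general case, and to track how the Hodge point $\mu^{\mathbf{i}}$ changes as we increment one coordinate $i_\varsigma$ by one. The two endpoints are $\mu^{(0)_{\tau\in I}} = \mu'_{\dom}$ (by Lemma~\ref{lem cyclic EL-chart}) and $\mu^{\mathbf{n}} = \mu$, so if I can show that each single step $\mathbf{i}\rightsquigarrow\mathbf{i'}$ (with $i'_\varsigma = i_\varsigma+1$, all other coordinates fixed) satisfies $\mu^{\mathbf{i}}\preceq\mu^{\mathbf{i'}}$ with equality precisely when the step changes nothing (i.e.\ when $\alpha = 0$), then transitivity of $\preceq$ gives $\mu'_{\dom}\preceq\mu$, and equality forces $\alpha = 0$ at every step, which by the recursive description means $\varphi = \varphi_0$, i.e.\ $(A,\varphi)$ is cyclic. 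Conversely if $(A,\varphi)$ is cyclic then $\varphi = \varphi_0$ by uniqueness in Lemma~\ref{lem cyclic EL-chart}, so $\mu = \mu'_{\dom}$.

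So the core of the proof is the single-step comparison. Fix $\varsigma$ and $\mathbf{i}\le\mathbf{i'}\le\mathbf{n}$ differing only in the $\varsigma$-component, and set $\alpha := \varphi(x_{\varsigma,i_\varsigma}+h) - (\varphi(x_{\varsigma,i_\varsigma})+1) \ge 1$ (the case $\alpha = 0$ is excluded by the definition of the $x_{\varsigma,j}$, so every genuine step has $\alpha\ge 1$ — I should double-check this, as it is what makes the equality statement clean; actually one needs $\alpha \geq 1$ by the very definition of the set $\{x_{\tau,i}\}$ as those $a$ with $\varphi(a+h) > \varphi(a)+1$). By the recursive formula, $\varphi_{\mathbf{i'}}$ differs from $\varphi_{\mathbf{i}}$ only along the finite $h$-chain $x := x_{\varsigma,i_\varsigma}, x-h, \ldots, x - \height(x)\cdot h$ inside $A_{(\varsigma)}$, where the value drops by $\alpha$. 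Since the Hodge point's $\tau$-component is read off from the multiplicities of level sets $|A_{(\tau)}\cap\varphi^{-1}(\{n\})|$, only the $\varsigma$-component of the Hodge point changes. I would compute $\mu^{\mathbf{i'}}_\varsigma$ from $\mu^{\mathbf{i}}_\varsigma$ explicitly: the chain of length $\height(x)+1$ contributes, before the step, $\varphi_{\mathbf{i}}$-values $\varphi_{\mathbf{i}}(x), \varphi_{\mathbf{i}}(x)-1, \ldots$ (consecutive by property~(b) of Lemma~\ref{lem adapted family}, or directly), and after the step the same chain sits at values shifted down by $\alpha$. The net effect on the multiset defining $\mu_\varsigma$ is to remove a block of $\height(x)+1$ consecutive integers and reinsert it shifted down by $\alpha$; this is exactly a "transfer" move of the type analyzed in Corollary~\ref{cor length of transfer} / the discussion of $\preceq$, which always produces something $\preceq$-smaller in the dominant order and strictly smaller when $\alpha\ge 1$. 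The endpoint of the polygon is unchanged because the step conserves $\orbitsum{}$ (the number of lattice points, equivalently the total) — this needs a one-line check that property~(d) / the level-set bookkeeping is consistent.

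The main obstacle I expect is making the single-step bookkeeping fully precise: one must verify that after lowering $\varphi$ by $\alpha$ along the chain $x, x-h, \ldots, x-\height(x)\cdot h$, the resulting $(A,\varphi_{\mathbf{i'}})$ still satisfies conditions (a)--(d) (asserted in the text right after Definition~\ref{def deformation}, so I may cite it), and — more delicately — that the induced change in the multiset $\{\,$multiplicity-of-$n$ in $\mu^{\mathbf{i}}_\varsigma\,\}$ really is the clean "shift a consecutive block down by $\alpha$" move rather than something messier. The subtlety is that $x+h$ may or may not lie in $A$, and the values $\varphi(a)$ for $a$ in the chain interact with the rest of $A_{(\varsigma)}$ through the level-set counts; I would handle this by arguing at the level of the sequences $(a^{\varsigma,l}_j)_j$ of Lemma~\ref{lem adapted family}, where the step corresponds to cutting one sequence at $x$ and splicing, making the effect on $(a^{\varsigma,l}_0)_l$ — hence on the Hodge point — transparent. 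Once that is set up, the inequality $\mu^{\mathbf{i}}\preceq\mu^{\mathbf{i'}}$ and the equality characterization follow from the definition of $\preceq$ by partial sums, and the global statement follows by induction on $\sum_\tau n_\tau$.
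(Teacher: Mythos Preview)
Your plan is exactly the paper's: use the canonical deformation and show $\mu^{\mathbf{i}} \prec \mu^{\mathbf{i'}}$ (strictly, since $\alpha \ge 1$ by the very definition of the $x_{\varsigma,j}$) at each single step, with the endpoints $\mu'_{\dom}$ and $\mu$ supplied by Lemma~\ref{lem cyclic EL-chart}. The one place to correct is your description of how $\mu^{\mathbf{i}}_\varsigma$ changes: shifting the $\varphi$-values along the chain $x, x-h, \ldots, x-\height(x)\cdot h$ down by $\alpha$ does \emph{not} remove a block of $\height(x)+1$ integers from the size-$h$ multiset of coordinates of $\mu^{\mathbf{i}}_\varsigma$ --- that describes the change in $\varphi$-values, not in the Hodge point. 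Translating through the level-set definition $|A_{(\varsigma)}\cap\varphi^{-1}(\{n\})| - |A_{(\varsigma)}\cap\varphi^{-1}(\{n-1\})|$, one finds (as the paper does) that exactly \emph{two} coordinates of $\mu^{\mathbf{i}}_\varsigma$ change: $\varphi_{\mathbf{i}}(x)-\height(x)$ and $\varphi_{\mathbf{i}}(x)-\alpha+1$ are replaced by $\varphi_{\mathbf{i}}(x)-\alpha-\height(x)$ and $\varphi_{\mathbf{i}}(x)+1$. Since both old values lie strictly in the open interval between the two new ones, $\mu^{\mathbf{i}} \prec \mu^{\mathbf{i'}}$ is immediate, and no appeal to Lemma~\ref{lem adapted family} or Corollary~\ref{cor length of transfer} is needed.
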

 \begin{proof}
  We have already shown that $\mu = \mu'_{\dom}$ if $(A,\varphi)$ is cyclic in Lemma \ref{lem cyclic EL-chart}. It suffices to show $\mu^{\mathbf{i}} \prec \mu^{\mathbf{i'}}$ for all pairs $\mathbf{i}, \mathbf{i'}$ such that $i'_{\varsigma} = i_{\varsigma}+1$ for some $\varsigma \in I$ and $i'_{\tau} = i_\tau$ for $\tau \not= \varsigma$. From the recursive description of $\varphi_{\mathbf{i'}}$ above we see that we get $\mu^{\mathbf{i'}}$ from $\mu^{\mathbf{i}}$ by replacing two coordinates in $\mu^{\mathbf{i}}_\varsigma$ and permuting its coordinates if necessary to get a dominant cocharacter. Using the same notation as above, we replace the set
  \[
   \{ \varphi_{\mathbf{i}}(x_{\varsigma,i_\varsigma})-\height (x_{\varsigma, i_\varsigma}), \varphi_{\mathbf{i}}(x_{\varsigma,i_\varsigma}) - \alpha +1\}
  \]
  with
  \[
    \{ \varphi_{\mathbf{i}} (x_{\varsigma,i_\varsigma})-\alpha-\height (x_{\varsigma, i_\varsigma}), \varphi_{\mathbf{i}} (x_{\varsigma,i_\varsigma})+1  \}.
  \]
  Since
  \[
   \varphi_{\mathbf{i}}(x_{\varsigma,i_\varsigma})-\height (x_{\varsigma, i_\varsigma}),\, \varphi_{\mathbf{i}}(x_{\varsigma,i_\varsigma}) - \alpha +1 \in (\varphi_{\mathbf{i}} (x_{\varsigma,i_\varsigma})-\alpha-\height (x_{\varsigma, i_\varsigma}), \varphi_{\mathbf{i}'} (x_{\varsigma,i_\varsigma})+1),
  \]
  we get $\mu^{\mathbf{i}} \prec \mu^{\mathbf{i'}}$.
 \end{proof}

 Deducing the following corollaries from Lemma \ref{lem comparison type Hodge} is literally the same as the proofs of Cor.\ 3.7 and Lemma 3.8 in \cite{viehmann06}. We give the proofs for the reader's convenience.

 \begin{corollary}
  If $\mu$ is minuscule, then all extended EL-charts for $\mu$ are cyclic.
 \end{corollary}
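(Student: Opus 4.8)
The plan is to deduce the corollary immediately from Lemma \ref{lem comparison type Hodge}, once we know the elementary fact that a minuscule dominant cocharacter is minimal for the order $\preceq$.

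First I would unwind what "minuscule" means in the present setting: for $G = \Res_{k'/k}\GL_h$ a dominant cocharacter $\mu = (\mu_\tau)_{\tau\in I} \in \prod_{\tau\in I}\ZZ^h$ is minuscule precisely when each component $\mu_\tau$ is a minuscule dominant coweight of $\GL_h$, i.e. (after ordering) $\mu_\tau = (a_\tau,\ldots,a_\tau,a_\tau+1,\ldots,a_\tau+1)$ for some $a_\tau\in\ZZ$. I would then check that such a $\mu_\tau$ is minimal among dominant elements of $\ZZ^h$ having the same coordinate sum, for the order $\preceq$: if $\lambda\in\ZZ^h$ is dominant with $\lambda\preceq\mu_\tau$, the inequalities on partial sums ($\sum_{i=1}^{1}\lambda_i \geq a_\tau$ and, using the penultimate partial sum together with equality of total sums, $\lambda_h\leq a_\tau+1$) force every coordinate of $\lambda$ to lie in $\{a_\tau,a_\tau+1\}$; comparing total sums then yields $\lambda=\mu_\tau$. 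Hence, componentwise, $\mu$ is minimal for $\preceq$ among dominant cocharacters with the same image in $\pi_1(G)$.

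Now let $(A,\varphi)$ be an extended EL-chart for $\mu$, that is, one whose Hodge point equals $\mu$, and let $\mu'$ be its type. By Lemma \ref{lem comparison type Hodge} we have $\mu'_{\dom}\preceq\mu$; moreover $\mu'_{\dom}$ and $\mu$ have the same image in $\pi_1(G)$, since equality of the total sums in each component is part of the definition of $\preceq$ on $\prod_{\tau\in I}\QQ^h$. The minimality established above therefore forces $\mu'_{\dom}=\mu$, and the equality clause of Lemma \ref{lem comparison type Hodge} then gives that $(A,\varphi)$ is cyclic, as desired.

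I do not expect any genuine obstacle here: the only point needing a line of argument is the $\preceq$-minimality of minuscule coweights, and the rest is a direct application of Lemma \ref{lem comparison type Hodge}. This is exactly the argument of Cor.\ 3.7 of \cite{viehmann06} transported to the group $\Res_{k'/k}\GL_h$.
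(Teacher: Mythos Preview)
Your proof is correct and follows exactly the same approach as the paper's: both invoke Lemma~\ref{lem comparison type Hodge} to get $\mu'_{\dom}\preceq\mu$ and then use the $\preceq$-minimality of minuscule dominant cocharacters to conclude $\mu'_{\dom}=\mu$, whence cyclicity. The only difference is that you spell out the minimality argument explicitly, whereas the paper takes it for granted.
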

 \begin{proof}
  Let $(A,\varphi)$ be an extended EL-chart for $\mu$ and let $\mu'$ be the type of $A$. Since $\mu$ is minuscule, $\mu'_{\dom} \preceq \mu$ implies $\mu'_{\dom} = \mu$. Hence the assertion follows from Lemma \ref{lem comparison type Hodge}.
 \end{proof}

 \begin{corollary} \label{cor finitely many EL-charts}
  There are only finitely many extended EL-charts for $\mu$.
 \end{corollary}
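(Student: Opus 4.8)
The plan is to bound separately the number of possible underlying normalized EL-charts $A$ and, for each fixed $A$, the number of admissible functions $\varphi$; combining the two bounds gives the corollary.

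First I would control the underlying EL-chart. If $(A,\varphi)$ is an extended EL-chart for $\mu$ and $\mu'$ is the type of $A$, then Lemma \ref{lem comparison type Hodge} gives $\mu'_{\dom}\preceq\mu$. I claim the set of dominant elements of $\prod_{\tau\in I}\ZZ^h$ lying below $\mu$ for the order $\preceq$ is finite: arguing componentwise, if $\lambda_\tau\in\ZZ^h$ is dominant with $\lambda_\tau\preceq\mu_\tau$, then the case $j=1$ of the defining inequalities gives $(\lambda_\tau)_1\geq\mu_{\tau,1}$, while the case $j=h-1$ together with the equality of total sums gives $(\lambda_\tau)_h\leq\mu_{\tau,h}$; since $\lambda_\tau$ is increasing, all its coordinates lie in the finite interval $[\mu_{\tau,1},\mu_{\tau,h}]$. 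As $\mu'$ is obtained from $\mu'_{\dom}$ by permuting coordinates inside each $\tau$-block, there are only finitely many possibilities for the type $\mu'$, and hence — via the bijection between normalized EL-charts and their types — only finitely many possibilities for $A$.

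Next I would fix such an $A$ and bound the number of $\varphi$. For each $\tau\in I$ the set $A_{(\tau)}$ is bounded from below and satisfies $A_{(\tau)}+h\subseteq A_{(\tau)}$, so it contains a full ray; let $N_\tau$ be the least integer with $\{c\in\ZZ_{(\tau)};\,c\geq N_\tau\}\subseteq A_{(\tau)}$, so that $A_{(\tau)}\cap\{c;\,c<N_\tau\}$ is finite. By condition (c) of Definition \ref{def extended EL-chart}, $\varphi(a)=\height f(a)$ whenever $a\in A_{(\tau)}$ with $a\geq N_\tau$, so $\varphi$ is completely determined by its restriction to the finite set $A\cap\bigsqcup_{\tau\in I}\{c\in\ZZ_{(\tau)};\,c<N_\tau\}$. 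For each $a$ in this finite set, condition (c) and $\varphi(a)\in\NN_0$ confine $\varphi(a)$ to $\{0,1,\ldots,\height f(a)\}$, which is finite because $A$ is bounded from below. Hence only finitely many $\varphi$ occur for a given $A$, and combining with the previous step finishes the proof.

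The argument is essentially bookkeeping, so I do not expect a serious obstacle; the only point requiring a little care is the second step, where one must verify that condition (c) genuinely pins down $\varphi$ outside a finite set (this is exactly where the hypothesis that $(A,\varphi)$ is a full extended EL-chart, rather than an arbitrary pair, enters) and that $\height f(a)$ is finite, which again is just the boundedness of $A$ from below.
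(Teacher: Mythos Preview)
Your proof is correct and follows essentially the same route as the paper's: both invoke Lemma~\ref{lem comparison type Hodge} to show only finitely many types (hence EL-charts $A$) can occur, and then use the equality case of condition~(c) in Definition~\ref{def extended EL-chart} to pin down $\varphi$ outside a finite set, with finitely many choices on that set. You simply spell out in more detail why $\{\lambda\in X_*(T)_{\dom};\,\lambda\preceq\mu\}$ is finite and why each $A_{(\tau)}$ eventually contains a full ray.
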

 \begin{proof}
  As a consequence of Lemma \ref{lem comparison type Hodge} there are only finitely many possible types of extended EL-charts with Hodge point $\mu$. If we fix such a type, the EL-chart $A$ is uniquely determined. The value of the function $\varphi$ is uniquely determined by $A$ for all but finitely many elements  and for each such element, $\varphi$ can only take finitely many values by the inequality of Definition \teil{\ref{def extended EL-chart}}{c}.
 \end{proof}

 \section{Decomposition of $X_\mu (b)$} \label{sect decomposition}

 We fix a lattice $M \in X_\mu (b)^0$. Let $(A(M),\varphi(M))$ be defined as above and $B(M) := A(M) \setminus (A(M)+h)$.
 
 \begin{lemma} \label{lem nak}
  Let $a_{(\tau)} \in A(M)$ such that $c\in A(M)$ for every $c \geq a_{(\tau)}$. Then $\{v \in N_\tau; \calI_\tau (v) \geq a\} \subset M_{\tau}$. \end{lemma}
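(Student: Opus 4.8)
The plan is to prove this by a Nakayama-type argument, exploiting that $M_\tau$ is a $\overline{k}\pot{t}$-lattice and that the hypothesis $c\in A(M)$ for all $c\geq a_{(\tau)}$ says that every ``index level'' at or above $a$ is attained by some element of $M_\tau$. Concretely, I want to show that for every $n\geq a$ there is a vector $v\in M_\tau$ with $\calI_\tau(v)=n$ and whose leading coefficient (the coefficient of $e_{\tau,n}$) is $1$ — call such a $v$ \emph{monic of index $n$}. Once I have a monic vector of each index $n\geq a$, I claim the $\overline{k}\pot{t}$-span of $\{v^{(n)}\}_{n\geq a}$ already contains the full $\overline{k}\pot{t}$-module $\{v\in N_\tau; \calI_\tau(v)\geq a\}$.

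**Key steps.** First I would observe that $a_{(\tau)}\in A(M)=\calI(M_{hom})$ together with the hypothesis gives, for each integer $n\geq a$, some $w\in M_\tau\setminus\{0\}$ with $\calI_\tau(w)=n$; scaling by the inverse of its leading coefficient in $\overline{k}^\times$ (the leading coefficient lies in $\overline{k}$, not just in $k$, since we work over $\overline{k}$, so this is legitimate) yields a monic $v^{(n)}\in M_\tau$ of index $n$. Second, I would show $\{v\in N_\tau; \calI_\tau(v)\geq a\}\subset \sum_{n\geq a}\overline{k}\pot{t}\,v^{(n)}$: given such a $v$, write $v=\sum_{n\geq a} c_n e_{\tau,n}$ with $c_n\in\overline{k}$; then $v - \sum_{n\geq a} c_n t^{\lfloor (n-n_0)/h\rfloor}\,(\text{appropriate } v^{(\cdot)})$-style successive approximation converges $t$-adically, because subtracting $c_{a}\cdot(\text{the } v^{(n)} \text{ of smallest relevant index})$ strictly raises the index, and $\calI_\tau$ satisfies the strong triangle inequality (noted in the text). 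More carefully: the submodule $P:=\{v\in N_\tau;\calI_\tau(v)\geq a\}$ is a lattice (it is $t^{\lceil a/h\rceil}\cdot(\text{something})$ up to finitely many basis vectors $e_{\tau,a},\dots,e_{\tau,\lceil a/h\rceil h-1}$), so it is $t$-adically complete, and the monic vectors $v^{(a)},v^{(a+1)},\dots,v^{(a+h-1)}$ together with $t$ generate it as a $\overline{k}\pot{t}$-module by a finite Nakayama argument modulo $t$. Since each $v^{(n)}\in M_\tau$, we conclude $P\subset M_\tau$.

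**Main obstacle.** The delicate point is the reduction to finitely many generators and the clean application of Nakayama: I need that the images of $v^{(a)},\dots,v^{(a+h-1)}$ in $P/tP$ span, which follows because $P/tP$ has basis (the images of) $e_{\tau,a},\dots,e_{\tau,a+h-1}$ and $v^{(n)}$ reduces to $e_{\tau,n}+(\text{higher})$; here I must be careful that ``higher index'' terms of $v^{(n)}$ with index $\geq a+h$ vanish in $P/tP$ — which holds since $e_{\tau,n'}\in tP$ whenever $n'\geq a+h$ — while terms with index between $a$ and $a+h-1$ are handled by triangularity (order the $v^{(n)}$ by index and do row reduction). So the genuine content is just: produce monic vectors of every index $\geq a$ (uses the hypothesis and $\overline{k}$ algebraically closed, hence leading coefficients invertible), then run finite Nakayama over $\overline{k}\pot{t}$ on the lattice $P$. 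I expect writing down the triangular reduction mod $t$ precisely to be the only place demanding care; everything else is formal.
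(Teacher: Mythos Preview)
Your proposal is correct and takes essentially the same approach as the paper: pick elements of $M_\tau$ with index $a,a+1,\dots,a+h-1$ (the hypothesis guarantees they exist), observe they span the lattice $P=\{v\in N_\tau;\calI_\tau(v)\geq a\}$ modulo $tP$ by triangularity, and conclude $P\subset M_\tau$ by Nakayama. The paper is simply terser---it does not bother normalizing to monic vectors (any nonzero leading coefficient in $\overline{k}$ is a unit, so this step is harmless but unnecessary) and phrases the spanning step as $M'=M''+tM'$ directly.
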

 \begin{proof}
  We denote $M' := \{v\in N_{\tau}; \, \calI_\tau(v) \geq a\}$ and $M'' := M' \cap M_{\tau}$. For $b=a,\ldots, a+h-1$ choose $v_b \in M''$ with $\calI_\tau (v_b) = b$. Obviously we can write any element $x\in M'$ in the form
  \[
    x = \sum_{b=a}^{a+h-1} \alpha_b\cdot v_b + x' 
  \]
  with $\alpha_b \in k$ and $x' \in t\cdot M' = \{ v\in N_\tau;\, \calI(v) \geq a+h\}$. Thus $M' = M'' + t\cdot M'$ and the claim follows by Nakayama's lemma.
 \end{proof}

 \begin{lemma} \label{lem A(M) is extended EL-chart}
  Let $M \in X_\mu(b)^0$. Then $(A(M), \varphi(M))$ is an extended EL-chart for $\mu$.
 \end{lemma}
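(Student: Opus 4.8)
The statement to prove is Lemma \ref{lem A(M) is extended EL-chart}: for $M \in X_\mu(b)^0$, the pair $(A(M),\varphi(M))$ is an extended EL-chart for $\mu$. The strategy is to verify each of the defining properties of Definition \ref{def extended EL-chart} in turn, using the concrete description of $A(M) = \calI(M_{hom})$, of $\varphi(M)$ via the $b\sigma$-action, and the key auxiliary Lemma \ref{lem nak} which says that whenever the ``upper cone'' $\{c \geq a\}$ lies in $A(M)$, then the corresponding submodule $\{v : \calI_\tau(v) \geq a\}$ is contained in $M_\tau$. Before doing that I would first check that $A(M)$ is a normalized EL-chart: boundedness from below and $f$-stability follow because $b\sigma(M_{\tau-1})$ and $M_\tau$ are lattices with $b\sigma(M_\tau) \subset M_\tau$ (we arranged $\mu \geq 0$), and $\calI_\tau(t v) = \calI_\tau(v) + h$ gives $A(M) + h \subset A(M)$; normalization is forced by $\vol M_\tau = 0$ for all $\tau$, since the elements of $B(M)_{(0)}$ represent the distinct residues mod $h$ realized by a basis of $M_0$ adapted to the standard lattice, and the volume-zero condition pins down the sum of their lifts to be $h(h-1)/2$.

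Now for the four conditions on $\varphi = \varphi(M)$. Condition (a), $\varphi(a) = -\infty \iff a \notin A(M)$, is immediate from the definition. Condition (b), $\varphi(a+h) \geq \varphi(a) + 1$: if $v$ realizes $\varphi(a)$, i.e.\ $\calI(v) = a$ and $t^{-\varphi(a)} b\sigma(v) \in M_{hom}$, then $tv$ has index $a + h$ and $t^{-(\varphi(a)+1)} b\sigma(tv) = t^{-\varphi(a)} b\sigma(v) \in M_{hom}$, so $\varphi(a+h) \geq \varphi(a) + 1$. Condition (c) is the heart: the inequality $\varphi(a) \leq \height f(a)$ holds because if $\calI(v) = a$ and $t^{-n} b\sigma(v) \in M_{hom}$, then $b\sigma(v) \in M_{hom}$ with $\calI(b\sigma(v)) = |f(a)|$ (since $b$ shifts $e_{\tau,i} \mapsto e_{\tau, i+m_\tau}$ and $\sigma$ permutes components), and $t^{-n}$ times it still being in $M_{hom}$ forces $|f(a)| - nh \in A(M)$, i.e.\ $n \leq \height f(a)$; conversely, when $\{c \geq a\} \subset A(M)$, Lemma \ref{lem nak} gives $\{v : \calI_\tau(v) \geq a\} \subset M_\tau$, and one uses $\inv(M_\tau, b\sigma(M_{\tau-1})) = \mu_\tau$ together with the elementary-divisor structure to produce a $v$ with $\calI(v) = a$ and $t^{-\height f(a)} b\sigma(v) \in M_{hom}$, forcing equality. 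Condition (d), the injection on level sets, I would prove by exhibiting for each $n$ an injection from $\{c \geq a\} \cap \varphi^{-1}(\{n\})$ into $\{c \geq a+h\} \cap \varphi^{-1}(\{n+1\})$: given $c$ in the source with a vector $v$ realizing it, the vector $tv$ (or a suitable modification) has index $c + h \geq a + h$ and $\varphi$-value at least $n+1$; to get exactly $n+1$ rather than more, and to see that distinct $c$'s go to distinct targets with the right $\varphi$-value, one argues with a basis of $M_\tau$ adapted simultaneously to $b\sigma(M_{\tau-1})$ (the vectors $v$ and their multiples span the relevant graded pieces).

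**Main obstacle.** The delicate points are the equality clause in (c) and the whole of (d). For (c), the subtlety is going from ``the upper cone is in $A(M)$'' to actually constructing a vector of index exactly $a$ whose image under $t^{-\height f(a)} b\sigma$ lands in $M_{hom}$ — this requires understanding how $b\sigma$ interacts with the lattice $M_\tau$ at the ``top'' of the relevant elementary-divisor filtration, i.e.\ using $\inv(M_\tau, b\sigma(M_{\tau-1})) = \mu_\tau$ to control which index shifts are achievable, and Lemma \ref{lem nak} is exactly the tool that says the top piece of $M_\tau$ is as large as possible. For (d), the main work is bookkeeping: one must produce the injection compatibly across all $n$ simultaneously, which amounts to choosing a coherent ``adapted family'' as in Lemma \ref{lem adapted family} — indeed the cleanest route may be to directly verify the equivalent condition of Lemma \ref{lem adapted family} by building the sequences $(a^{\tau,l}_j)_j$ out of chains $v, tv, \dots$ of vectors in $M_\tau$, where the jump $a^{\tau,l}_{j+1} > a^{\tau,l}_j + h$ occurs precisely when $\varphi$ fails to increase by exactly $1$, reflecting that $b\sigma(v)$ exits $t \cdot M_{hom}$ prematurely. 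Finally, one checks the Hodge point of $(A(M),\varphi(M))$ equals $\mu$: the multiplicity of $n$ in $\mu_\tau$ should match $|A(M)_{(\tau)} \cap \varphi^{-1}(\{n\})| - |A(M)_{(\tau)} \cap \varphi^{-1}(\{n-1\})|$, which follows by comparing the elementary divisors of $M_\tau$ inside $b\sigma(M_{\tau-1})$ with the graded structure recorded by $\varphi$ — essentially, $\varphi$ measures how deep a vector's $b\sigma$-image sits, and the jumps in $\varphi$ across a level set count the elementary divisors equal to $n$, which by definition of $\inv$ are the entries of $\mu_\tau$.
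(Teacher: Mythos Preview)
Your outline is correct and matches the paper's approach: verify that $A(M)$ is a normalized EL-chart, then check (a)--(d) of Definition~\ref{def extended EL-chart}, and finally identify the Hodge point. The arguments for (a), (b), the inequality in (c), and normalization are exactly as in the paper.

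Where your sketch diverges is precisely at the two points you flag as obstacles, and here the paper's arguments are both simpler than what you propose.

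\emph{Equality in (c).} You invoke $\inv(M_\tau, b\sigma(M_{\tau-1})) = \mu_\tau$ and the elementary-divisor structure, but this is unnecessary. The paper argues directly: since $f(a) - h\cdot\height f(a) \in A(M)$, choose any $v' \in M_{hom}$ with $\calI(v') = f(a) - h\cdot\height f(a)$, and set $v := t^{\height f(a)} (b\sigma)^{-1}(v')$. Then $\calI(v) = a$, so $v \in M$ by Lemma~\ref{lem nak}, and $t^{-\height f(a)} b\sigma(v) = v' \in M$. No appeal to $\mu$ is needed at this step.

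\emph{Condition (d) and the Hodge point.} Rather than building the sequences of Lemma~\ref{lem adapted family} or an ad hoc injection, the paper introduces, for $a \in \ZZ_{(\tau)} \cup \{-\infty\}$ and $n \in \NN_0$, the $\overline{k}$-vector spaces
\[
 V'_{a,n} := \{v \in M_\tau : v = 0 \text{ or } (\calI(v) \geq a \text{ and } t^{-n} b\sigma(v) \in M)\}, \qquad V_{a,n} := V'_{a,n}/V'_{a,n+1}.
\]
Choosing for each $c \in \{c' \geq a\} \cap \varphi^{-1}(\{n\})$ a vector $v_c$ realizing $\varphi(c) = n$, the strong triangle inequality for $\calI_\tau$ shows their images in $V_{a,n}$ are linearly independent; a dimension count then gives $\dim V_{a,n} = |\{c' \geq a\} \cap \varphi^{-1}(\{n\})|$ exactly. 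Since the images of $t v_c$ in $V_{a+h,n+1}$ are again linearly independent, (d) follows at once. This neatly sidesteps your worry about ``exactly $n+1$ rather than more'': passing to the quotient $V_{a,n}$ kills the higher $\varphi$-values automatically. The Hodge-point identification is then the special case $a = -\infty$: one has $|\{i : \mu_{\tau,i} = n\}| = \dim V_{-\infty,n} - \dim V_{-\infty,n-1}$.
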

 
 \begin{proof}
  Let us first check that $A(M)$ is a normalized EL-chart. It is stable under $f$ and the addition of $h$ since
  \begin{eqnarray}
   \calI (t\cdot v) &=& \calI(v) + h \\
   \calI (b\sigma (v)) &=& f( \calI(v) )
  \end{eqnarray}
  and $t\cdot M \subset M$ and $b\sigma (M) \subset M$ by our convention $\mu \geq 0$. The fact that $A(M)$ is bounded from below is obvious, thus $A(M)$ in an EL-chart.
  
  The assetion that $A(M)$ is normalised can be deduced from the prerequisite $\vol M = (0)_{\tau\in I}$ as follows. Let $a \in \NN$ such that $c \in A(M)$ for every $c \geq (a\cdot h)_{(0)}$. Then by Lemma \ref{lem nak}, we have $t^a\cdot M^0_{(0)} \subset M$ and
 \begin{eqnarray*}
  0 &=& \vol(M_{(0)}) \\
  &=& \dim_k (M_{(0)}/t^a\cdot M^0_{(0)}) - \vol(t^a\cdot M^0_{(0)}) \\
  &=& |\{c \in A(M);\, c <  (a\cdot h)_{(0)}\}| - a\cdot h  \\
  &=& |\NN^d \setminus A(M)_{(0)}| - |A(M)_{(0)} \setminus \NN^d|.
  \end{eqnarray*}
  Hence
  \[
   \sum_{b \in B(M)_{(0)}} b = \sum_{i=0}^{h-1} i = \frac{h(h-1)}{2}.
  \]
  
  Now $\varphi(M)$ satisfies property (a) of Definition \ref{def extended EL-chart} by definition. To see that it satisfies (b) and (c), fix $a \in A$ and let $v\in M_{hom}$ such that $\calI (v) = a$ and $t^{-\varphi(a)} \cdot b\sigma(v) \in M$. Then $t^{-\varphi(a)+1} b \sigma(t\cdot v) = t^{-\varphi(M)(a)} \cdot b\sigma (v) \in M$, proving (b) and $f(a) - \varphi(M) (a)\cdot h = \calI(t^{-\varphi(a)}\cdot b\sigma(v)) \in A(M)$ which implies the inequality part of (c). Let $a \in A$ such that $c \in A(M)$ for every $c \geq a$. We choose an element $v' \in M_{hom}$ with $\calI(v') = f(a) - h\cdot\height f(a)$ and denote $v = t^{\height f(a)}\cdot (b\sigma)^{-1}(v')$. Then $\calI(v) = a$, hence $v \in M$ by Lemma \ref{lem nak} and $t^{-\height f(a)}\cdot b\sigma(v) = v' \in M$. Thus $\varphi(a) = \height f(a)$. To verify that $\varphi$ has property (d), we fix $\tau \in I$ and define for $a\in\ZZ_{(\tau)} \cup \{-\infty\}, n\in\NN$ the $\overline{k}$-vector space
  \[
   V'_{a,n} := \{ v \in M_\tau;\, v=0 \text{ or } \calI (v) \geq a, t^{-n}\cdot b\sigma(v) \in M \}
  \]
  and $V_{a,n} := V'_{a,n}/V'_{a,n+1}$. Now associate to every $c \in \{a'\in A_{(\tau)};\, a' \geq a\} \cap \varphi^{-1}(\{n\})$ an element $v_c \in M_\tau$ with $\calI (v_c) = c$ and $t^{-\varphi(c)}\cdot v_c \in M$. Using the strong triangle inequality for $\calI_\tau$, we see that the images $v_c$ in $V_{a,n}$ are linearly independent. Thus $\dim V_{a,n} \geq | \{a'\in A_{(\tau)};\, a' \geq a\} \cap \varphi(M)^{-1}(\{n\})|$. By counting dimensions in a suitable finite dimensional quotient of $V'_{a,0}$, we see that this is in fact an equality. Now the images of the $t\cdot v_c$ in $V_{a+h,n+1}$ are also linearly independent, thus
  \[
   |\{a'\in A_{(\tau)};\, a' \geq a\} \cap \varphi(M)^{-1}(\{n\})| \leq | \{a'\in A_{(\tau)};\, a' \geq a+h\} \cap \varphi(M)^{-1}(\{n+1\})|.
  \]
  
  Now it remains to show that $(A(M),\varphi (M))$ has Hodge point $\mu$. But
  \[
   |\{i; \mu_{\tau, i} =n \}| = \dim V_{-\infty,n} - \dim V_{-\infty, n-1} = | A_{(\tau)} \cap \varphi^{-1} (\{n\})| - |A_{(\tau)} \cap \varphi^{-1}(\{n-1\})|.
  \]
 \end{proof}

 This proof also shows that $A(M)$ is an EL-chart for every $G$-lattice $M \subset N$ and $A(M)$ is normalized if and only if $M$ is special. For any extended EL-chart $(A,\varphi)$ for $\mu$ we denote
 \[
  \calS_{A,\varphi} = \{ M \in \Grass (\overline{k});\, (A(M),\varphi(M)) = (A,\varphi)\}.
 \]
  Since the Hodge point of $M$ and $(A(M),\varphi(M))$ coincide by the lemma above, we have indeed $\calS_{A,\varphi} \subset X_\mu(b)^0$.

 \begin{lemma}
  The $\calS_{A,\varphi}$ define a decomposition of $X_\mu(b)^0$ into finitely many locally closed subsets. In particular, $\dim X_\mu(b)^0 = \max_{(A,\varphi)} \dim \calS_{A,\varphi}$.
 \end{lemma}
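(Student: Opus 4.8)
The plan is to leverage what has already been proved: finiteness of the index set is immediate, and the only real point is that each stratum is locally closed. First I would record that by Lemma~\ref{lem A(M) is extended EL-chart} the assignment $M \mapsto (A(M),\varphi(M))$ sends every $\overline{k}$-point of $X_\mu(b)^0$ to an extended EL-chart for $\mu$, and by Corollary~\ref{cor finitely many EL-charts} there are only finitely many of these. Hence $X_\mu(b)^0 = \bigsqcup_{(A,\varphi)} \calS_{A,\varphi}$ is a finite set-theoretic partition. Granting that each $\calS_{A,\varphi}$ is locally closed, the dimension statement is formal: the closures $\overline{\calS_{A,\varphi}}$ are then finitely many closed subsets covering $X_\mu(b)^0$, each $\calS_{A,\varphi}$ is open and dense in its closure, so $\dim X_\mu(b)^0 = \max_{(A,\varphi)} \dim \overline{\calS_{A,\varphi}} = \max_{(A,\varphi)} \dim \calS_{A,\varphi}$.

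So the content is local closedness of $\calS_{A,\varphi}$. Since this is a local property and $X_\mu(b)$ is locally of finite type over $\overline{k}$, I may replace $X_\mu(b)^0$ by a quasi-compact open $U$; on such a $U$ the lattices form a bounded family, say $t^N M^0 \subset M \subset t^{-N}M^0$ for a fixed $N$, so $M \mapsto (M_\tau/t^N M^0_\tau)_{\tau}$ realizes $U$ as a locally closed subscheme of a product $\calX$ of ordinary finite-dimensional Grassmannians, and I would carry out the argument inside $\calX$. For $a\in\ZZ^{(d)}$ write $V_{\tau,a} := \{v\in N_\tau;\ \calI_\tau(v)\geq a\}$, a fixed subspace whose image in the truncation is fixed. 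Because $A(M)$ is an EL-chart contained in a bounded region once $M\in U$, the condition $A(M)=A$ is a finite conjunction, over the relevant $a$, of conditions of the form ``$\dim(M_\tau\cap V_{\tau,a})$ (computed in the truncation) equals a prescribed value'', each of which is locally closed in $\calX$ by upper semicontinuity of $M_\tau\mapsto\dim(M_\tau\cap V_{\tau,a})$. This yields a locally closed subset $\calA\subset\calX$ on which all these intersection dimensions are constant.

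It then remains to cut out $\{\varphi(M)=\varphi\}$ inside $\calA$. For $a$ outside a bounded range nothing is imposed: if $a\notin A$ both sides are $-\infty$, and if $a\in A$ with $\{c;\ c\geq a\}\subset A$ then both sides equal $\height f(a)$ by the computation in the proof of Lemma~\ref{lem A(M) is extended EL-chart}. For each of the finitely many remaining $a=a'_{(\tau)}$ I would use that $\varphi(M)(a'_{(\tau)})\geq n$ holds if and only if $\dim\bigl(M_\tau\cap V_{\tau,a'}\cap(b\sigma)^{-1}(t^n M_{\tau+1})\bigr)>\dim\bigl(M_\tau\cap V_{\tau,a'+1}\cap(b\sigma)^{-1}(t^n M_{\tau+1})\bigr)$, the gap being exactly the rank of the leading-coefficient functional $v\mapsto(\text{coefficient of }e_{\tau,a'})$. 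Over $\calA$ each of these two dimensions is the kernel dimension of a morphism of vector bundles on $\calA$ — the restriction to the constant-rank bundle with fibre $M_\tau\cap V_{\tau,a'}$, resp.\ $M_\tau\cap V_{\tau,a'+1}$, of $b\sigma$ followed by the universal quotient $N_{\tau+1}\twoheadrightarrow N_{\tau+1}/t^n M_{\tau+1}$ — hence is an upper semicontinuous function of $M$. So $\{\varphi(M)(a)\geq n\}$, and therefore $\{\varphi(M)(a)=\varphi(a)\}$, is locally closed; intersecting these finitely many conditions with $\calA$ shows $\calS_{A,\varphi}\cap U$ is locally closed in $U$.

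The step I expect to be the main obstacle is precisely this last one: setting up the correct vector bundles over the truncated parameter space so that ``$\varphi(M)(a)\geq n$'' manifestly becomes a rank condition, the subtlety being that the target $N_{\tau+1}/t^n M_{\tau+1}$ itself varies with $M$ and must be handled via the universal quotient bundle rather than a fixed vector space. Once that bookkeeping is in place the argument reduces to the standard semicontinuity of ranks, and everything else is formal given Lemma~\ref{lem A(M) is extended EL-chart} and Corollary~\ref{cor finitely many EL-charts}.
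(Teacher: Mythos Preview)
Your proposal is correct and follows the same approach as the paper: the paper cites Lemma~\ref{lem A(M) is extended EL-chart} and Corollary~\ref{cor finitely many EL-charts} for the finite disjoint decomposition, then reduces local closedness to the condition $(A(M)_{(\tau)},\varphi(M)_{|A(M)_{(\tau)}}) = (A_{(\tau)}, \varphi_{|A_{(\tau)}})$ for each $\tau$ separately and defers to the proof of Lemma~4.2 in \cite{viehmann06}, which is precisely the rank/semicontinuity argument you have written out in detail. One minor caveat worth recording: the map $b\sigma$ is $\sigma$-semilinear over $\overline{k}$ rather than linear, so what you call a ``morphism of vector bundles'' is really a morphism $\mathrm{Frob}^*\calE \to \calF$ after a Frobenius twist of the source; this does not affect the upper semicontinuity of the kernel dimension, but strictly speaking it is the point that requires care, not the varying target.
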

 \begin{proof}
  By Lemma \ref{lem A(M) is extended EL-chart}, $X_\mu(b)^0$ is the (disjoint) union of the $\calS_{A,\varphi}$ and by Corollary \ref{cor finitely many EL-charts} this union is finite. It remains to show that $\calS_{A,\varphi}$ is locally closed. One shows that the condition $(A(M)_{(\tau)},\varphi(M)_{|A(M)_{(\tau)}}) = (A_{(\tau)}, \varphi_{|A_{(\tau)}})$ is locally closed analogously to the proof of Lemma 4.2 in \cite{viehmann06}. Then it follows that $\calS_{A,\varphi}$ is locally closed as it is the intersection of finitely many locally closed subsets. 
 \end{proof}
  
  \begin{definition}
  Let $(A,\varphi)$ be an extended EL-chart for $\mu$. We define
 \[
  \calV(A,\varphi) = \{ (a,c) \in A \times A;\, a < c, \varphi(a) > \varphi(c) > \varphi(a-h)\} 
 \]
 \end{definition}

 We note that the set $\calV(A,\varphi)$ is finite, since we have $\varphi(a-h) = \varphi(a)-1$  for almost all $a\in A$.

 \begin{proposition}\label{prop decomposition}
  Let $(A,\varphi)$ be an extended EL-chart for $\mu$. There exists an open subscheme $U_{A,\varphi} \subseteq \AA^{|\calV (A,\varphi)|} $ and a morphism $U_{A,\varphi} \rightarrow \calS_{A,\varphi}$ which is bijective on $\overline{k}$-valued points. In particular, $ \calS_{A,\varphi}$ is irreducible and of dimension $\calV(A,\varphi)|$.
 \end{proposition}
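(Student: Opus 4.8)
The plan is to imitate the proof of Proposition~4.3 of \cite{viehmann06} (together with the surrounding Lemmas~4.5--4.6), inserting the obvious bookkeeping for the $d$ factors $N_\tau$ and the twisted Frobenius $b\sigma\colon N_\tau\to N_{\tau+1}$; apart from one step everything is either formal or literally the same as in the split case.

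First I would introduce coordinates on the locally closed locus $\{M;\,A(M)=A\}$. For each $\tau$ the $h$ elements of $B_{(\tau)}$ have pairwise distinct residues modulo $h$ (this is built into the recursion defining the type of an EL-chart), so $M_\tau$ has a unique $\overline k\pot t$-basis of \emph{reduced} vectors $v_b$, $b\in B_{(\tau)}$, where $v_b\in N_\tau$ has index $|b|$, leading coefficient $1$, vanishing coefficients at all indices of $A_{(\tau)}$ above $b$, and arbitrary coefficients $\alpha_{b,n}\in\overline k$ at the indices $n\notin A_{(\tau)}$ above $b$; conversely every choice of the $\alpha_{b,n}$ yields a $G$-lattice with $A(M)=A$, since no leading-term cancellation occurs by the distinct-residue property. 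Thus $\{M;\,A(M)=A\}$ is identified with an affine space in the coordinates $\alpha_{b,n}$, and $\calS_{A,\varphi}$ is the subfunctor defined by the condition $\varphi(M)=\varphi$.

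The substance of the argument is to convert $\varphi(M)=\varphi$ into equations on the $\alpha_{b,n}$. Using $b\sigma(e_{\tau,n})=e_{\tau+1,n+m_{\tau+1}}$ one sees, exactly as in the proof of Lemma~\ref{lem A(M) is extended EL-chart} (and via Lemma~\ref{lem nak}), that $\varphi(M)(a)\ge\varphi(a)$ is equivalent to the existence of a reduced vector $w\in M_\tau$ of index $|a|$ with $t^{-\varphi(a)}b\sigma(w)\in M_{\tau+1}$; that this holds automatically when $\{c;\,c\ge a\}\subseteq A$ (where moreover $\varphi(a)=\height f(a)$ by condition (c)); and that, propagating downward through each $A_{(\tau)}$ by repeated division by $t$ and using condition (b), the conditions $\varphi(M)(a)\ge\varphi(a)$ for all $a\in A$ triangularise: they express all but finitely many of the $\alpha_{b,n}$ as polynomials in the remaining ones, and the free ones are in natural bijection with $\calV(A,\varphi)$ — the coefficient attached to a pair $(a,c)\in\calV(A,\varphi)$ being the coefficient at index $|c|$ of the reduced vector of index $|a|$ realising $\varphi(M)(a)=\varphi(a)$. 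Here the inequality in condition (d) (equivalently Lemma~\ref{lem adapted family}) is what guarantees that these $|\calV(A,\varphi)|$ coefficients are unconstrained and the system consistent. The graph of this polynomial system is a closed subscheme $Z\cong\AA^{|\calV(A,\varphi)|}$ of the coordinate space on which $\varphi(M)\ge\varphi$ holds pointwise; since $\varphi(M)(a)\ge n$ is a closed condition, the locus $\varphi(M)=\varphi$ is the complement in $Z$ of the closed set $\bigcup_{a}\{\varphi(M)(a)\ge\varphi(a)+1\}$, hence open. I set $U_{A,\varphi}$ to be this open subscheme of $Z\cong\AA^{|\calV(A,\varphi)|}$; the tautological map $U_{A,\varphi}\to\calS_{A,\varphi}$, $(\alpha)\mapsto M((\alpha))$, is a morphism bijective on $\overline k$-points, and $U_{A,\varphi}\neq\emptyset$ because a generic choice of the free coordinates realises $\varphi(M)=\varphi$. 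An open subset of an affine space is irreducible, so $\calS_{A,\varphi}$ is irreducible, and a morphism of finite-type $\overline k$-schemes which is bijective on geometric points preserves dimension, so $\dim\calS_{A,\varphi}=|\calV(A,\varphi)|$.

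The main obstacle is precisely the triangularisation step: verifying, in the cyclic/twisted situation, that Viehmann's inductive argument still leaves exactly $|\calV(A,\varphi)|$ free parameters. One has to track simultaneously the ordering on each $A_{(\tau)}$, the heights, the cyclic map $f$, and the index shift $n\mapsto n+m_{\tau+1}$ induced by $b\sigma$, and check that the fact that $f(A)$ is in general strictly smaller than $A$ does not disturb the count. Everything preceding and following this step is routine.
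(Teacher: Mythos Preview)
Your overall strategy---parametrise, show the condition $\varphi(M)\ge\varphi$ cuts out an $\AA^{|\calV(A,\varphi)|}$, then pass to the open locus where equality holds---is sound in spirit, but it is \emph{not} what the paper (or Viehmann) actually does, and your description contains an internal inconsistency that makes the key step unverifiable as written.

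The paper does not first identify $\{M;\,A(M)=A\}$ with an affine space in ``reduced-basis'' coordinates $\alpha_{b,n}$ (indices $n\notin A$) and then impose $\varphi(M)\ge\varphi$ as equations on those coordinates. Instead it builds the family \emph{directly} over $\AA^{\calV(A,\varphi)}$: to $x=(x_{a,c})_{(a,c)\in\calV(A,\varphi)}$ one recursively assigns elements $v(a)\in N_{\hom}$ for \emph{every} $a\in A$ by
\[
 v(a)=\begin{cases}
   e_a+\sum_{(a,c)\in\calV(A,\varphi)} x_{a,c}\,v(c) & a=\max B_{(0)},\\[2pt]
   t^{-\varphi(a')}\,b\sigma(v(a'))+\sum_{(a,c)\in\calV(A,\varphi)} x_{a,c}\,v(c) & a\in B\setminus\{\max B_{(0)}\},\\[2pt]
   t\cdot v(a-h)+\sum_{(a,c)\in\calV(A,\varphi)} x_{a,c}\,v(c) & a\notin B,
 \end{cases}
\]
where in the second line $a'$ is the minimal element with $f(a')-\varphi(a')h=a$. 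The coordinates $x_{a,c}$ are thus coefficients of $v(c)$, not of $e_c$, in a recursion that moves along the \emph{cyclic} orbit of $f$ rather than ``downward through each $A_{(\tau)}$ by division by $t$'' as you propose. Four claims (unique solvability of the recursion; $A(M(x))=A$ with $\varphi(M(x))\ge\varphi$; openness and non-emptiness of the preimage of $\calS_{A,\varphi}$; uniqueness of the $v(a)$ for a given $M\in\calS_{A,\varphi}$) are then checked exactly as in \cite{viehmann06} with $a+m$ replaced by $f(a)$.

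Your approach has a concrete gap at the point you yourself flag. You define the reduced basis vectors $v_b$ ($b\in B$) to have \emph{vanishing} coefficients at all indices of $A_{(\tau)}$ above $b$; yet you then say the free coordinate attached to $(a,c)\in\calV(A,\varphi)$ is ``the coefficient at index $|c|$'' of a reduced vector---but $c\in A$, so by your own normalisation that coefficient is zero. In other words, the free parameters you want to isolate do not live in your coordinate system $(\alpha_{b,n})_{n\notin A}$, and the asserted ``triangularisation'' leaving exactly $|\calV(A,\varphi)|$ free coordinates is not established. The paper sidesteps this entirely by taking $\calV(A,\varphi)$ as the coordinate space from the outset and letting the $x_{a,c}$ be coefficients relative to the recursively constructed $v(c)$ rather than to the standard basis.
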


 \begin{proof}
   The proof is almost the same as of Thm.\ 4.3 in \cite{viehmann06}. We give an outline of the proof and explain how to adapt the proof of Viehmann to our more general notion.

   For any $\overline{k}$-algebra $R$ and $x \in R^{\calV(A,\varphi)} = \AA^{\calV(A,\varphi)}(R)$ we denote the coordinates of $x$ by $x_{a,c}$. We associate to every $x$ a set of elements $\{ v(a) \in N_{hom}; a\in A\}$ which satisfies the following equations.
  
  If $a = y := \max\{b\in B_{(0)}\}$ then
  \[
   v(a) = e_a + \sum_{(a,c)\in\calV(A,\varphi)} x_{a,c}\cdot v(c).
  \]
  For any other element $a \in B$ we want
  \[
    v(a) = v' + \sum_{(a,c)\in\calV(A,\varphi)} x_{a,c}\cdot v(c)
  \]
  where $v' = t^{-\varphi(a')} \cdot b\sigma(v(a'))$ for $a'$ being minimal satisfying $f(a') - \varphi(a')\cdot h = a$. At last, if $a \not\in B$, we impose
  \[
    v(a) = t\cdot v(a-h) + \sum_{(a,c)\in\calV(A,\varphi)} x_{a,c}\cdot v(c).
  \]
   
  \begin{claim}
   The set $\{v(a); a\in A\}$ is uniquely determined by the equations above.
  \end{claim}
  
  Hence the rule $x \mapsto M(x) := \langle v(a) ;\, a\in A \rangle_{k\pot{t}}$ is well-defined and as it is obviously functorial, induces a morphism $\AA^{\calV(A,\varphi)} \rightarrow \Grass$. But the image of this morphism is in general not contained in $\calS_{A,\varphi}$, we only have the following assertions:
  
  \begin{claim}
   For every $x \in \AA^{\calV(A,\varphi)}(\overline{k})$ we have $A(M(x)) = A$ and $\varphi(M(x))(a) \geq \varphi(a)$ for every $a\in A$.
  \end{claim}
  
  \begin{claim}
   The preimage $U(A,\varphi)$ of $\calS_{A,\varphi}$ is nonempty and open in $\AA^{\calV(A,\varphi)}$.
  \end{claim}
  
  Now the fact that the restriction $U(A,\varphi) \rightarrow \calS_{A,\varphi}$ of above morphism defines a bijection of $\overline{k}$-valued points follows from the following assertion.
  
  \begin{claim}
   Let $M \subset N$ be a special $F$-lattice such that $(A(M),\varphi(M)) = (A,\varphi)$. Then there exists a unique set of elements $\{v(a);\, a \in A\} \subset M$ satisfying the equations above.
  \end{claim}
  
  It remains to prove the four claims. But their proofs are literally the same as in \cite{viehmann06} if one replaces ``$a+m$'' and ``$a+im$'' by ``$f(a)$'' respectively ``$f^i (a)$''.
 \end{proof}

 \section{Combinatorics for extended EL-charts} \label{sect combinatorics}
 Proposition \ref{prop decomposition} reduces the proof of the formula (\ref{term superbasic}) to an estimation of $|\calV(A,\varphi)|$ for extended EL-charts $(A,\varphi)$ with Hodge point $\mu$. We start with the case where $(A,\varphi)$ is cyclic. In this case we have $\varphi(a+h) = \varphi(a)+1$ for all $a\in A$ and thus
 \[
  \calV(A,\varphi) = \{ (b,c) \in B \times A;\, b<c, \varphi(b) > \varphi(c) \}
 \]

 \begin{proposition} \label{prop upper bound V}
  Let $(A,\varphi)$ the cyclic extended EL-chart of type $\mu$. Then \[|\calV(A,\varphi)| \geq \lengthG{\nu}{\mu}\]. 
 \end{proposition}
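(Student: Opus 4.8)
The goal is to bound $|\calV(A,\varphi)|$ from below by $\lengthG{\nu}{\mu}$ when $(A,\varphi)$ is the cyclic extended EL-chart of type $\mu$. In the cyclic case $\varphi(a+h)=\varphi(a)+1$, so $\calV$ consists of pairs $(b,c)\in B\times A$ with $b<c$ and $\varphi(b)>\varphi(c)$. The first step is to reduce to counting inside $B$: since $\varphi$ increases by $1$ under $+h$, for each $c\in A$ with $\varphi(b)>\varphi(c)$ there is a unique translate $c-kh$ lying in $B$ (with $k=\height(c)$), and this translate still lies strictly above $b$ in $\varphi$-value (one has to check $\varphi(c-kh) = \varphi(c) - k < \varphi(b)$) — but the inequality $b < c-kh$ may fail, so one gets an \emph{injection} from a sub-collection, not a bijection. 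A cleaner route is to work directly with the sequence $b_0,\dots,b_{dh-1}$ enumerating $B$ and the type $\mu'=\mu$ via $\mu'_{i+1}=\varphi_0(b_i)$ (Lemma \ref{lem cyclic EL-chart}): then $\varphi(b_i)$ and $\varphi(b_j)$ are computed directly from partial sums of the $\mu'_k$, and $|\calV|$ becomes a count of ``inversions'' among the $b_i$ weighted appropriately.

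**Key steps.** First I would recall that $B_{(\tau)}$ has exactly $h$ elements, one in each residue class mod $h$, and that $\orbitsum{\mu}'\succeq\orbitsum{\nu}$ with $\orbitsum{\nu}=(\tfrac m h,\dots,\tfrac m h)$. The right-hand side $\lengthG{\nu}{\mu}=\length{\orbitsum{\nu}}{\orbitsum{\mu}}$ has, by the polygon interpretation following Lemma \ref{lem trivial facts}, a concrete meaning: since $\orbitsum{\mu}\in\ZZ^h$ is dominant and $\orbitsum{\nu}\preceq\orbitsum{\mu}$, it equals the number of lattice points on or below $\calP(\orbitsum{\nu})$ and strictly above $\calP(\orbitsum{\mu})$. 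So the plan is: (1) reformulate $\lengthG{\nu}{\mu}$ as this lattice-point count (or equivalently via Lemma \ref{lem length to dom} as $\sum_{i<j}\max\{\orbitsum{\mu}_i-\orbitsum{\mu}_j,0\}$ after sorting, combined with the $\length{\orbitsum{\nu}}{\orbitsum{\mu}_{\dom}}$ piece — but here $\orbitsum\mu$ is already dominant, so it is purely the ``how far below the line $\orbitsum\nu$'' count); (2) for each $\tau$, partition $B_{(\tau)}$ by $\varphi$-value and observe that the $\varphi$-values on $B_{(\tau)}$ realize, as a multiset, exactly the coordinates of $\mu_\tau$; (3) show that $|\calV(A,\varphi)|$ counts, for each pair of elements $b,b'\in B$ with $b<b'$, the ``excess'' $\max\{\varphi(b)-\varphi(b'),0\}$ summed appropriately — and compare this to the polygon count using the defining recursion $b_{i+1}=f(b_i)-\mu'_{i+1}h$, which translates positions mod $h$ into partial sums of the type.

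**The main obstacle.** The crux is matching the combinatorial count $|\calV(A,\varphi)|$ with the polygon/partial-sum expression for $\lengthG{\nu}{\mu}$, and in particular getting the inequality to go the right way — this is why the statement asserts $\geq$ rather than $=$. The subtlety is that $\calV$ only counts pairs $(b,c)$ with $b<c$ as elements of $\ZZ^{(d)}$ (same $\tau$, strictly smaller integer part), whereas the naive partial-sum count would want to compare $\varphi$-values of \emph{all} pairs regardless of order. The key lemma must be that the ordering of $B_{(\tau)}$ by integer part is compatible enough with the ordering coming from the recursion: concretely, tracking $b_i \bmod h$ via $b_{i+1}\equiv b_i + m_{\tau+1} \pmod h$, one shows that whenever $\varphi(b) > \varphi(b')$ ``should'' contribute to the length, the elements are forced into the order $b<b'$ by a counting/pigeonhole argument using coprimality of $m$ and $h$ — exactly as in Lemma 5.something... more precisely this is the analogue of the argument in \cite{viehmann06} for $\GL_h$. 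I expect the heart of the proof to be a careful bookkeeping identity expressing $|\calV(A,\varphi)|$ as $\sum_{\tau}\sum_{b\in B_{(\tau)}}(\text{number of }b'\in B_{(\tau)}\text{ with }b'>b,\ \varphi(b')<\varphi(b))$ and then bounding each inner term below by the corresponding contribution to the lattice-point count under $\calP(\orbitsum\nu)$, using that $\varphi(b)=$ (position of $b$ in the $f$-orbit) translated into a partial sum, and that $b$'s integer part grows linearly in that position at average rate $m/d$ while the slope of $\calP(\orbitsum\nu)$ is $m/h$ — the discrepancy between these two rates is precisely what produces the floor functions in $[\orbitsum\nu]$ and hence the defect term. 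The final step is then a direct, if slightly involved, comparison of floors, analogous to the computations in \cite{viehmann06}, Section 5.
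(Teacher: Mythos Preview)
Your plan has a genuine gap: the approach you sketch is essentially the one the paper uses for the \emph{upper} bound (Theorem~\ref{thm lower bound V}), and it naturally goes the wrong way here. The ``bookkeeping identity'' you propose, $|\calV(A,\varphi)| = \sum_\tau \sum_{b\in B_{(\tau)}} |\{b'\in B_{(\tau)}: b'>b,\ \varphi(b')<\varphi(b)\}|$, is not an identity but an \emph{undercount}: $\calV$ consists of pairs $(b,c)\in B\times A$, not $B\times B$. In the paper's notation this $B\times B$ count is exactly the summand $S_1$, and one has $S_1 = \lengthG{\tilde\mu}{\mu}$ where $\tilde\mu$ is the permutation of $\mu$ induced by ordering the $b_{\tau,i}$. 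Since $\orbitsum{\nu}\preceq\orbitsum{\tilde\mu}\preceq\orbitsum{\mu}$, this gives $S_1 \leq \lengthG{\nu}{\mu}$, the opposite of what you need. To salvage the approach you would have to bring in the remaining $S_2$-term and prove $S_1+S_2 \geq \lengthG{\nu}{\mu}$; the paper only establishes $\leq$ there, via an elaborate interpolation by polygons $\widehat\psi^i$, and you give no indication of how to reverse that argument.

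The paper's actual proof of Proposition~\ref{prop upper bound V} is completely different and much shorter. The key observation you are missing is that $\lengthG{\nu}{\mu}$ equals the number of ``gaps'' in $A_{(0)}$ above the minimal element $b_0$, i.e.\ $|\{n\in\NN:\, b_0+n\notin A_{(0)}\}|$; this follows from the recursion $b_{jd} = b_0 + jm - (\sum_{i=1}^j \orbitsum{\mu}_i)h$ together with the fact that each residue class mod $h$ appears exactly once in $B_{(0)}$. One then constructs an explicit injection from this set of gaps into $\calV(A,\varphi)$: given such an $n$, follow the sequence $b_0,b_1,\ldots$ and take the last index $i$ with $b_i+n\in A$ (this exists since $b+n\in A$ for the maximal $b\in B$); then $(b_i,b_i+n)\in\calV(A,\varphi)$ because $b_i+n\in A$ while $b_{i+1}+n\notin A$ forces $\varphi(b_i)>\varphi(b_i+n)$. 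This two-step argument avoids all inversion bookkeeping.
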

 \begin{proof}
  First we show that the right hand side of the inequality counts the number of positive integers $n$ such that $b_0+n\not\in A_{(0)}.$ Indeed, as $A_{(0)}+h \subset A_{(0)}$, we have
  \begin{eqnarray*}
   |\{n\in\NN; b_0+n \not\in A_{(0)}\}| &=& \frac{1}{h}\cdot\sum_{j=1}^{h-1} \left(b_{j\cdot d} - b_0 - j\right) \\
   &=& \frac{1}{h}\cdot\sum_{j=1}^{h-1} \left(f^{j\cdot d} (b_0) - \left(\sum_{i=1}^{j\cdot d} \mu_{i}\cdot h\right) - b_0 - j \right) \\
   &=& \frac{1}{h}\cdot\sum_{j=1}^{h-1} \left(b_0+j\cdot m - \left(\sum_{i=1}^{j} \orbitsum{\mu}_i \cdot h \right) -b_0-j \right) \\
   &=& \frac{1}{h}\cdot\sum_{j=1}^{h-1} \left( j\cdot (m-1) - h \cdot \calP (\orbitsum{\mu})(i) \right) \\
   &=& \frac{(m-1)(h-1)}{2} - \sum_{i=1}^{h-1}\calP (\orbitsum{\mu})(i) \\
   &=& \sum_{i=1}^{h-1} (\lfloor \calP(\orbitsum{\nu})(i) \rfloor - \calP (\orbitsum{\mu})(i)) \\
   &=& \lengthG{\nu}{\mu}
  \end{eqnarray*}
  Now we construct an injective map from $\{n\in\NN;b_0+n\not\in A\}$ into $\calV(A,\varphi)$. For this we remark that $(b_i,b_i +n) \in \calV (A,\varphi)$ if and only if $b_i +n\in A$ and  $b_{i+1}+n \not\in A$. Thus $n \mapsto (b_i,b_i+n)$ where $i = \max\{i=1,\ldots,h-1;\, b_i+n \in A \}$ gives us the injection we sought. Note this map is well-defined since for any $n\in\NN$ and maximal element $b$ of $B$ we have $b+n \in A$.
 \end{proof}

 \begin{theorem}\label{thm lower bound V}
  Let $(A,\varphi)$ be an extended EL-chart for $\mu$. Then $|\calV(A,\varphi)| \leq \lengthG{\nu}{\mu}$.
 \end{theorem}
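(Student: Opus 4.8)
The plan is to run an induction along the canonical deformation $(A,\varphi_{\mathbf{i}})_{0 \le \mathbf{i} \le \mathbf{n}}$ of $(A,\varphi)$ from Definition \ref{def deformation}, which interpolates between the cyclic chart $(A,\varphi_{\mathbf{0}}) = (A,\varphi_0)$ and $(A,\varphi_{\mathbf{n}}) = (A,\varphi)$. By Lemma \ref{lem cyclic EL-chart} the Hodge point $\mu^{\mathbf{0}}$ equals $\mu'_{\dom}$, where $\mu'$ is the type of $A$, and by Lemma \ref{lem comparison type Hodge} one has $\mu^{\mathbf{0}} \preceq \mu^{\mathbf{i}} \preceq \mu^{\mathbf{n}} = \mu$ for all $\mathbf{i}$; moreover, since the type $\mu'$ of $A$ satisfies $\orbitsum{\mu'} \succeq \orbitsum{\nu}$ (characterization of normalized EL-charts) we get $\orbitsum{\nu} \preceq \orbitsum{\mu'} \preceq \orbitsum{\mu^{\mathbf{0}}}$. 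As all $\mu^{\mathbf{i}}$ are integral cocharacters, Lemma \ref{lem trivial facts} gives the additivity
\[
 \lengthG{\nu}{\mu^{\mathbf{i'}}} = \lengthG{\nu}{\mu^{\mathbf{i}}} + \lengthG{\mu^{\mathbf{i}}}{\mu^{\mathbf{i'}}}
\]
for any two successive terms $\mathbf{i} \to \mathbf{i'}$ (differing by a unit increment in one coordinate) of a maximal chain from $\mathbf{0}$ to $\mathbf{n}$. Fixing such a chain, the theorem follows once we establish: (i) the cyclic bound $|\calV(A,\varphi_0)| \le \lengthG{\nu}{\mu^{\mathbf{0}}}$, and (ii) the step bound $|\calV(A,\varphi_{\mathbf{i'}})| - |\calV(A,\varphi_{\mathbf{i}})| \le \lengthG{\mu^{\mathbf{i}}}{\mu^{\mathbf{i'}}}$ for each elementary step.

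For (i): in the cyclic case $\calV(A,\varphi_0) = \{(b,c) \in B \times A \mid b < c,\ \varphi_0(b) > \varphi_0(c)\}$, and Proposition \ref{prop upper bound V} already supplies the reverse inequality $|\calV(A,\varphi_0)| \ge \lengthG{\nu}{\mu^{\mathbf{0}}}$, so it suffices to produce the matching upper bound. The plan is to enumerate $\calV(A,\varphi_0)$ component by component using the recursion $b_{\tau+1,i} = f(b_{\tau,i}) - \mu'_{\tau+1,i}\, h$ together with $\varphi_0(b_{\tau,i}) = \mu'_{\tau,i}$, extending the counting in the proof of Proposition \ref{prop upper bound V}; this is the direct generalization of the computation in \cite{viehmann06}, \S 3, and in combination with that proposition yields $|\calV(A,\varphi_0)| = \lengthG{\nu}{\mu^{\mathbf{0}}}$.

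For (ii), which I expect to be the main obstacle: in the elementary step $\mathbf{i} \to \mathbf{i'}$ only the values $\varphi_{\mathbf{i}}(x - jh)$, with $x := x_{\varsigma,i_\varsigma}$ and $0 \le j \le \height(x)$, are altered, each dropping by $\alpha := \varphi(x+h) - (\varphi(x)+1) \ge 1$. One should therefore write both $\calV(A,\varphi_{\mathbf{i}})$ and $\calV(A,\varphi_{\mathbf{i'}})$ as sets of pairs $(a,c)$ with $a < c$ and $\varphi(a) > \varphi(c) > \varphi(a-h)$, and isolate exactly the pairs for which one of $a$, $c$, $a+h$ lies in the down-column $\{x - jh \mid 0 \le j \le \height(x)\}$ — only these can be created or destroyed, and the combinatorics of which ones appear is governed by the column decomposition of the chart (Lemma \ref{lem adapted family}, i.e.\ condition (d) of Definition \ref{def extended EL-chart}) together with the height function. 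On the other side, $\mu^{\mathbf{i'}}$ arises from $\mu^{\mathbf{i}}$ by the two-coordinate transfer in the $\varsigma$-component exhibited in the proof of Lemma \ref{lem comparison type Hodge}, replacing $\{\varphi_{\mathbf{i}}(x) - \height(x),\ \varphi_{\mathbf{i}}(x) - \alpha + 1\}$ by $\{\varphi_{\mathbf{i}}(x) - \alpha - \height(x),\ \varphi_{\mathbf{i}}(x) + 1\}$; so Corollary \ref{cor length of transfer}, applied with $n_1 = \varphi_{\mathbf{i}}(x) - \alpha - \height(x)$, $n_4 = \varphi_{\mathbf{i}}(x) + 1$ and $n_2, n_3$ the two intermediate old values (reading the multiplicities off $A_{(\varsigma)}$ via $\varphi_{\mathbf{i}}$), expresses $\lengthG{\mu^{\mathbf{i}}}{\mu^{\mathbf{i'}}}$ as an explicit double sum of level multiplicities $|\{a \in A_{(\varsigma)} \mid \varphi_{\mathbf{i}}(a) = \cdot\}|$. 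The crux — and the step I expect to be delicate — is to bound the net number of pairs gained by this double sum; I anticipate this reduces to the same inductive bookkeeping as in \cite{viehmann06}, \S 3, now with ``$a+m$'' replaced throughout by ``$f(a)$''. Summing the step inequalities along the chosen maximal chain and using the additivity above then gives $|\calV(A,\varphi)| \le \lengthG{\nu}{\mu^{\mathbf{0}}} + \sum \lengthG{\mu^{\mathbf{i}}}{\mu^{\mathbf{i'}}} = \lengthG{\nu}{\mu}$.
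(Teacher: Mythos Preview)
Your overall architecture is exactly the paper's: induct along the canonical deformation, with the cyclic chart as the base case and the elementary step $\mathbf{i}\to\mathbf{i'}$ handled via Corollary~\ref{cor length of transfer}. Your sketch of~(ii) is essentially correct and matches the paper's argument (the sets $D_1,D_2,D_3$ and the complementary sets $C_1,C_2,C_3$ in the paper carry out precisely the bookkeeping you describe).

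The gap is in~(i). The cyclic bound $|\calV(A,\varphi_0)|\le \lengthG{\nu}{\mu^{\mathbf 0}}$ is \emph{not} a direct generalization of Viehmann's $d=1$ computation, nor can it be obtained by ``extending the counting in the proof of Proposition~\ref{prop upper bound V}'': that proposition constructs an injection \emph{into} $\calV(A,\varphi_0)$, which goes the wrong way. The paper's proof of the cyclic case introduces a genuinely new intermediate object: the cocharacter $\tilde\mu$ obtained by reordering, for each $\tau$, the pairs $(b_{\tau,i},\mu'_{\tau+1,i})$ so that the $b$'s are increasing. One then splits $|\calV(A,\varphi_0)|=S_1+S_2$, identifies $S_1=\lengthG{\tilde\mu}{\mu^{\mathbf 0}}$ via Lemma~\ref{lem length to dom}, and bounds $S_2\le \lengthG{\nu}{\tilde\mu}$ by constructing an interpolating sequence of polygons $\widehat\psi^1=\orbitsum{\nu}\succeq\cdots\succeq\widehat\psi^h=\orbitsum{\tilde\mu}$ via auxiliary bijections $\suc_i\colon B\to B$. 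This interpolation is the heart of the argument and has no analogue in Proposition~\ref{prop upper bound V}; without it you have not established~(i). (Incidentally, equality in the cyclic case is a corollary, not an input: the paper proves only $\le$ directly.)
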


 \begin{proof}
  We assume first that $(A,\varphi)$ is cyclic with type $\mu'$. Then
  \begin{eqnarray*}
   |\calV(A,\varphi)| &=& |\{(b_i,a) \in B \times A;\, b_i < a, \varphi(a) < \varphi(b_i)\}| \\
   &=& |\{ (b_i,b_j+\alpha h); \, \alpha \in \NN_0, b_i < b_j, \mu'_{i+1} > \mu'_{j+1}+\alpha\}| \\
   & & + |\{ (b_i,b_j+\alpha h); \, \alpha \in \NN, b_j < b_i < b_j+\alpha h, \mu'_{i+1} > \mu'_{j+1}+\alpha\}| \\
   &=& \sum_{(b_i,b_j);\, b_i < b_j, \mu'_{i+1} > \mu'_{j+1}} \mu'_{i+1} - \mu'_{j+1} \\
   & & + |\{ (b_i,b_j+\alpha h); \, \alpha \in \NN, b_j < b_i < b_j+\alpha h, \mu'_{i+1} > \mu'_{j+1}+\alpha\}|
  \end{eqnarray*}
  We refer to these two summands by $S_1$ and $S_2$.

  For each $\tau\in I$ denote by $(\tilde{b}_{\tau,1},\tilde{\mu}_{\tau+1,1}),\ldots,(\tilde{b}_{\tau,h}, \tilde{\mu}_{\tau+1,h})$ the permutation of $(b_{\tau,1},\mu_{\tau+1,1}),\ldots,$ $(b_{\tau,h},\mu_{\tau+1,h})$ such that the $(\tilde{b}_{\tau,i})_i$ are arranged in increasing order. From the ordering we obtain for any $1\leq j \leq h, \tau\in I$
  \[
   \sum_{i=0,\ldots ,j-1} \tilde{b}_{\tau,i} \leq \sum_{i=0,\ldots ,j-1} \suc (\tilde{b}_{\tau-1,i})
  \]
  and thus
  \[
   \sum_{i=1}^{j} |\tilde{b}_{\tau,i}| - |\tilde{b}_{\tau-1,i}| \leq j\cdot m_\tau - \sum_{i=1}^j \tilde{\mu}_{\tau,i}\cdot h .
  \]
  Adding these inequalities for all $\tau \in I$ and rearranging the terms we obtain $\sum_{i=1}^{j} \orbitsum{\tilde{\mu}}_j \leq j\cdot\frac{m}{h}$. Thus $\underline{\nu} \preceq \underline{\tilde{\mu}} \preceq \underline{\mu}$.

  Using this notation we can simplify
  \begin{eqnarray*}
   S_1 &=& \sum_{i < j \atop \tau\in I} \max\{\tilde{\mu}_{\tau,i} - \tilde{\mu}_{\tau,j}, 0\} \\
   &=& \sum_{\tau\in I} \length{\tilde{\mu}_\tau}{\tilde{\mu}_{\tau\, \dom}} \\
   &=& \lengthG{\tilde{\mu}}{\mu},
  \end{eqnarray*}
  where the second line holds because of Lemma \ref{lem length to dom}.

 We have now reduced the claim to $S_2 \leq \lengthG{\nu}{\mu} - \lengthG{\tilde{\mu}}{\mu}$, which is equivalent to $S_2 \leq \lengthG{\nu}{\tilde{\mu}}$. Now
  \begin{eqnarray*}
   S_2 &=& \sum_{i=2}^{h} \sum_{j=1}^{i-1} \sum_{\tau\in I} | \{\alpha \in \ZZ; \tilde{b}_{\tau,i} < \tilde{b}_{\tau,j} + \alpha h, \tilde{\mu}_{\tau+1,i} > \tilde{\mu}_{\tau+1,j} + \alpha\} | \\
   &=& \sum_{i=2}^{h} \sum_{j=1}^{i-1} \sum_{\tau\in I} | \{\alpha\in\ZZ;\, \tilde{b}_{\tau,i} - \tilde{b}_{\tau,j} < \alpha h < \tilde{\mu}_{\tau+1,i}h - \tilde{\mu}_{\tau+1,j}h\} | \\
   &=& \sum_{i=2}^{h} \sum_{j=1}^{i-1} \sum_{\tau\in I} |\{\alpha\in\ZZ;\, 0 < \alpha h < (\tilde{b}_{\tau,j} - \tilde{\mu}_{\tau+1,j} h) - (\tilde{b}_{\tau,i} - \tilde{\mu}_{\tau+1,i}h)\}| \\
   &=& \sum_{i=2}^{h} \sum_{j=1}^{i-1} \sum_{\tau\in I} \max\{ \lfloor \tfrac{ \suc (\tilde{b}_{\tau,j}) - \suc (\tilde{b}_{\tau,i})}{h} \rfloor,0\}
  \end{eqnarray*}

  Recall that $\lengthG{\nu}{\tilde{\mu}}$ counts the lattice points between the polygons associated to $\orbitsum{\nu}$ and $\orbitsum{\tilde{\mu}}$. So it is enough to construct a decreasing sequence (with respect to $\preceq$) of $\widehat{\psi}^i \in \QQ^h$ for $i=1,\ldots ,h$ with $\widehat{\psi}^1 = \orbitsum{\nu}$ and $\widehat{\psi}^{h} = \orbitsum{\tilde{\mu}}$ such that there are at least
  \[
   \sum_{j=1}^{i-1} \sum_{\tau\in I} \max\{ \lfloor \tfrac{ \suc (\tilde{b}_{\tau,j}) - \suc (\tilde{b}_{\tau,i})}{h} \rfloor,0\}
  \]
  lattice points which are on or  below $\calP(\widehat{\psi}^{i-1})$ and above $\calP(\widehat{\psi}^{i})$. 

  We define a bijection $\suc_i:B\rightarrow B$ as follows: For $j > i,\tau\in I$ let $ \suc_i (\tilde{b}_{\tau,j}) = \suc (\tilde{b}_{\tau,j})$.
  For $j\leq i$ define $\suc_i (\tilde{b}_{\tau,j})$ such that for every $\tau \in I$ the tuple  $( \suc_i (\tilde{b}_{\tau,j}) )_{j=1}^i$ is the permutation of $(\suc (\tilde{b}_{\tau, j}))_{j=1}^i$ which is arranged in increasing order. Let $\psi^i \in \prod_{\tau\in I} \QQ^h$ be defined by $\suc_i (\tilde{b}_{\tau,j}) = f(\tilde{b}_{\tau,j}) - \psi^i_{\tau+1,j}\cdot h$ and $\widehat{\psi}^i = \orbitsum{\psi^i}$. By definition we have
  \[
   \psi_{\tau+1,j}^i = \frac{m_{\tau+1}}{h} - \frac{\suc_i (\tilde{b}_{\tau,j}) - \tilde{b}_{\tau,j}}{h}
  \]
  and thus $\widehat{\psi}^1 = \orbitsum{\tilde{\mu}}$ and $\widehat{\psi}^{h} = \orbitsum{\nu}$. 

 We now estimate the number of lattice points between $\calP(\widehat{\psi}^{i-1})$ and $\calP(\widehat{\psi}^{i})$ by calculating $\calP (\widehat{\psi}^i) (j) - \calP (\widehat{\psi}^{i-1}) (j)$ for every $1\leq j < h$. In particular, we will see that $\calP (\widehat{\psi}^i) (j) - \calP (\widehat{\psi}^{i-1}) (j) \geq 0$ and thus $\widehat{\psi}^i \preceq \widehat{\psi}^{i-1}$. In order to compare $\calP (\widehat{\psi}^i)$ and $\calP (\widehat{\psi}^{i-1})$, we give an equivalent \emph{recursive} description of $\suc_i$. Let $i_0 \leq i$ be minimal such that $\suc_{i-1} (\tilde{b}_{\tau,i_0}) \geq \suc (\tilde{b}_{\tau,i})$. Then
  \[
   \suc_i (\tilde{b}_{\tau,j}) = \left\{ \begin{array}{ll}  \suc_{i-1} (\tilde{b}_{\tau,j}) & \textnormal{ if } j < i_0 \\ \suc(\tilde{b}_{\tau,i}) & \textnormal{ if } j = i_0 \\ \suc_{i-1} (\tilde{b}_{\tau,j-1}) & \textnormal{ if } i_0 < j \leq i \\ \suc_{i-1} (\tilde{b}_{\tau,j}) & \textnormal{ if } j > i \end{array} \right. .
  \]
  Now
  \begin{eqnarray*}
   \calP (\widehat{\psi}^i) (j) - \calP (\widehat{\psi}^{i-1}) (j) &=& \sum_{\tau\in I} \sum_{k=1}^j ({\psi}_{\tau,k}^i - {\psi}_{\tau,k}^{i-1}) \\
   &=& \sum_{\tau\in I} \sum_{k=1}^j \frac{1}{h}(\suc_{i-1} (\tilde{b}_{\tau,k}) - \suc_i (\tilde{b}_{\tau,k})) \\
   &=& \sum_{\tau\in I} \frac{1}{h} \left( \sum_{k=1}^j \suc_{i-1} (\tilde{b}_{\tau,k}) - \sum_{k=1}^j \suc_i (\tilde{b}_{\tau,k}) \right).
  \end{eqnarray*}
  By the recursive formula above the right hand side equals zero if $j \geq i$ and
  \[
   \sum_{\tau\in I} \max \{0, \tfrac{\suc_{i-1}(\tilde{b}_{\tau.j}) - \suc (\tilde{b}_{\tau,i})}{h}\}
  \]
  if $j < i$. Thus there are at least
  \[
   \sum_{\tau\in I}\sum_{j<i} \max \{0, \lfloor \tfrac{\suc_{i-1}(\tilde{b}_{\tau.j}) - \suc (\tilde{b}_{\tau,i})}{h} \rfloor \} =\sum_{\tau\in I}\sum_{j<i} \max \{0, \lfloor \tfrac{\suc(\tilde{b}_{\tau.j}) - \suc (\tilde{b}_{\tau,i})}{h}\rfloor \}
  \]
  lattice points which are above $\calP (\widehat{\psi}^i)$ and on or below $\calP (\widehat{\psi}^{i-1})$, which finishes the proof for a cyclic EL-chart.

  For a noncyclic EL-chart $(A,\varphi)$ consider the canonical deformation $((A,\varphi_{\mathbf{i}}))_{\mathbf{i}}$ (see Definition \ref{def deformation}). The theorem is proven by induction on $\mathbf{i}$. For $\mathbf{i} = (0)_{\tau\in I}$ the extended EL-chart is cyclic and the claim is proven above. Now the induction step requires that we show that the claim remains true if increase a single coordinate of $\mathbf{i}$ by one. Let $\mathbf{i'} \leq \mathbf{n}$ with $i'_\varsigma = i_\varsigma+1$ for some $\varsigma \in I$ and $i'_\tau = i_\tau$ for $\tau \not= \varsigma$. For convenience, we introduce the notations
  \begin{eqnarray*}
   \alpha &:=& \varphi(x_{\varsigma,i_\varsigma}+h) - (\varphi(x_{\varsigma,i_\varsigma})+1) \\
   n &:=& \height (x_{\varsigma,i_\varsigma}) \\
   \mu^{i} &:=& \mu^{\mathbf{i}}_\varsigma \\
   \mu^{i'} &:=& \mu^{\mathbf{i'}}_\varsigma.
  \end{eqnarray*}

  Then the right hand sides of the formula (\ref{term superbasic}) for $\mu^{\mathbf{i'}}$ and $\mu^{\mathbf{i}}$ differ by
  \[
   \langle \rho, \mu^{\mathbf{i'}} \rangle - \langle \rho, \mu^{\mathbf{i}} \rangle = \lengthG{\mu^{\mathbf{i'}}}{\mu^{\mathbf{i}}}.
  \] 
 We had shown in the proof of Lemma~\ref{lem comparison type Hodge} that one obtains $\mu^{\mathbf{i'}}$ from $\mu^{\mathbf{i}}$ by replacing the entries $ \varphi_{\mathbf{i}}(x_{\varsigma,i_\varsigma})-\height (x_{\varsigma, i_\varsigma})$ and $\varphi_{\mathbf{i}}(x_{\varsigma,i_\varsigma}) - \alpha +1$ in the $\varsigma$-th component by $ \varphi_{\mathbf{i}} (x_{\varsigma,i_\varsigma})-\alpha-\height (x_{\varsigma, i_\varsigma})$ and $\varphi_{\mathbf{i}} (x_{\varsigma,i_\varsigma})+1 $ and rearranging the entries afterwards so that we obtain a domiant cocharacter. Thus we may apply Corollary \ref{cor length of transfer}, which shows that
  \begin{eqnarray*}
   \lengthG{\mu^{\mathbf{i'}}}{\mu^{\mathbf{i}}} = \length{\mu^{i'}}{\mu^{i}} 
    &= &(\sum_{k=0}^{\alpha-1} \sum_{l=0}^{n} |\{j; \mu^i_j = \varphi_{\mathbf{i}}(x_{\varsigma,i_\varsigma})-k-l\}|) - \min \{\alpha,n+1\}.
  \end{eqnarray*}  
  
  We denote this term by $\Delta$. We have to show that $\Delta \geq |\calV(A,\varphi_{\mathbf{i'}})| - |\calV(A,\varphi_{\mathbf{i}})|$. 
 Now recall that 
  \[
  \varphi_{\mathbf{i'}} (a) = \left\{ \begin{array}{ll}
                                         \varphi_{\mathbf{i}} (a) - \alpha & \textnormal{if } a = x_{\varsigma,i_\varsigma}, x_{\varsigma,i_\varsigma} - h, \ldots , x_{\varsigma,i_\varsigma} - \height (x_{\varsigma, i_\varsigma}) \cdot h \\
                                         \varphi_{\mathbf{i}} (a) & \textnormal{otherwise.}
                                        \end{array} \right. 
 \]
 Together with the explanation given below one obtains
  \begin{eqnarray*}
   \calV(A,\varphi_{\mathbf{i'}}) \setminus \calV(A,\varphi_{\mathbf{i}}) &=& D_1 \cup D_3 \\
   \calV(A,\varphi_{\mathbf{i}}) \setminus \calV(A, \varphi_{\mathbf{i'}}) &=& D_2
  \end{eqnarray*}
  where
  \begin{eqnarray*}
   D_1 &=& \{ (x_{\varsigma,i_\varsigma}+h,c) \in A\times A;\, c > x_{\varsigma,i_\varsigma}+h, \varphi_{\mathbf{i'}}(x_{\varsigma,i_\varsigma}+h) > \varphi_{\mathbf{i'}} (c) > \varphi_{\mathbf{i'}} (x_{\varsigma,i_\varsigma}) \} \\   
   D_2 &=& \left\{(x_{\varsigma,i_\varsigma}-nh, c) \in A\times A;\, \begin{array}{l}
                                                                 c > x_{\varsigma,i_\varsigma}-nh, \varphi_{\mathbf{i}}(x_{\varsigma,i_\varsigma}-nh) > \varphi_{\mathbf{i}}(c), \\ \varphi_{\mathbf{i'}}(x_{\varsigma,i_\varsigma}-nh) \leq \varphi_{\mathbf{i'}}(c)
                                                                \end{array}
  \right\} \\
   D_3 &=& \left\{(b,x_{\varsigma,i_\varsigma}-\delta h) \in B\times A;\, \begin{array}{l}
                                                                      \delta \in \{0,\ldots,n\}, b \not=  x_{\varsigma,i_\varsigma}-nh, b < x_{\varsigma,i_\varsigma} - \delta h, \\ \varphi_{\mathbf{i'}}(b) > \varphi_{\mathbf{i'}} (x_{\varsigma,i_\varsigma} - \delta h), \varphi_{\mathbf{i}}(b) \leq \varphi_{\mathbf{i}} (x_{\varsigma,i_\varsigma} - \delta h)
                                                                     \end{array}
  \right\}.
  \end{eqnarray*}
 The above description can be derived as follows. Since the values $\varphi_{\mathbf{i}}$ and $\varphi_{\mathbf{i'}}$ only differ at the places $S := \{x_{\varsigma,i_\varsigma}, x_{\varsigma,i_\varsigma}-h, \ldots , x_{\varsigma,i_\varsigma} - nh\}$, a pair $(a,c)$ can only be an element of the difference of the sets $\calV (A,\varphi_{\mathbf{i}})$ and $\calV (A, \varphi_{\mathbf{i'}})$ if either $a-h \in S$, $a \in S$ or $c\in S$. The only case which occurs for $a-h \in S$ is $a = x_{\varsigma,i_\varsigma} - h$ as otherwise $\varphi_{\mathbf{i}}(a) - \varphi_{\mathbf{}}(a-h) = \varphi_{\mathbf{i'}}(a) - \varphi_{\mathbf{i'}}(a-h) =1$. For the same reason the case $a \in S$ reduces to $a = x_{\varsigma,i_\varsigma} - nh$. Also, we have by construction
 \begin{eqnarray*}
  \varphi_{\mathbf{i}}(a) &=& \varphi_{\mathbf{i}}(a+h)-1 \textnormal{ for all } a\in A, a\leq x_{\varsigma,i_\varsigma} \\
  \varphi_{\mathbf{i'}}(a) &=& \varphi_{\mathbf{i'}}(a+h)-1 \textnormal{ for all } a\in A, a < x_{\varsigma,i_\varsigma}.
 \end{eqnarray*}
  Thus if $(a,c) \in \calV(A,\varphi_{\mathbf{i}})$ or $(a,c) \in \calV(A,\varphi_{\mathbf{i'}})$ with $c \leq x_{\varsigma,i_\varsigma}$ (in particular if $c\in S$), then $a \in B$. Altogether, the elements $(a,c)$ of the difference of the sets $\calV (A,\varphi_{\mathbf{i}})$ and $\calV (A, \varphi_{\mathbf{i'}})$ satisfy either $a= x_{\varsigma,i_\varsigma} +h$, $a + x_{\varsigma,i_\varsigma}-nh$ or $(a,c) \in B\times S$. Since the transition from $\varphi_{\mathbf{i}}$ to $\varphi_{\mathbf{i'}}$ reduces the the values at $S$, the only cases which occur for $(a,c) \in \calV(A,\varphi_{\mathbf{i'}}) \setminus \calV(A,\varphi_{\mathbf{i}})$ are $a = x_{\varsigma,i_\varsigma}+h$ and $(a,c) \in B\times S$, which yields the decomposition $ \calV(A,\varphi_{\mathbf{i'}}) \setminus \calV(A,\varphi_{\mathbf{i}}) = D_1 \cup D_3$. For the same reason the only case which occurs for $(a,c) \in \calV(A,\varphi_{\mathbf{i}}) \setminus \calV(A,\varphi_{\mathbf{i'}})$ is $a = x_{\varsigma,i_\varsigma} - nh$, which gives the equality $\calV(A,\varphi_{\mathbf{i}}) \setminus \calV(A,\varphi_{\mathbf{i'}}) = D_2$.
  
  We obtain $|\calV(A,\varphi_{\mathbf{i'}})| - |\calV(A,\varphi_{\mathbf{i}})| = S_1 - S_2 + S_3$ with
  \begin{eqnarray*}
   S_1 &=& | \{a\in A;\, a > x_{\varsigma,i_\varsigma}+h, \varphi_{\mathbf{i}} (a) \in [\varphi_{\mathbf{i}}(x_{\varsigma,i_\varsigma})-\alpha+1, \varphi_{\mathbf{i}}(x_{\varsigma,i_\varsigma})]\} | \\
   S_2 &=& | \{a \in A;\, a > x_{\varsigma,i_\varsigma}-nh, \varphi_{\mathbf{i}} (a) \in [\varphi_{\mathbf{i}}(x_{\varsigma,i_\varsigma})-\alpha-n, \varphi_{\mathbf{i}}(x_{\varsigma,i_\varsigma})-n-1]\} | \\
   S_3 &=& | \{(b,\delta) \in B \times \{0,\ldots,n\};\,b \not=  x_{\varsigma,i_\varsigma}-nh, b < x_{\varsigma,i_\varsigma} - \delta h,\\
           & & \,\, \varphi_{\mathbf{i}}(b) \in    [\varphi_{\mathbf{i}}(x_{\varsigma,i_\varsigma})-\delta-\alpha+1, \varphi_{\mathbf{i}} (x_{\varsigma,i_\varsigma}) - \delta ] \} |.
  \end{eqnarray*}
 In order to calculate $S_1-S_2$, we introduce the sets $C_1$ and $C_2$, which are in some sense complementary to the sets considered for $S_1$ and $S_2$.
  \begin{eqnarray*}
   C_1 &=&  \{a\in A;\, a \leq x_{\varsigma,i_\varsigma}+h, \varphi_{\mathbf{i}} (a) \in [\varphi_{\mathbf{i}}(x_{\varsigma,i_\varsigma})-\alpha+1, \varphi_{\mathbf{i}}(x_{\varsigma,i_\varsigma})]\}  \\
   C_2 &=&  \{a \in A;\, a \leq x_{\varsigma,i_\varsigma}-nh, \varphi_{\mathbf{i}} (a) \in [\varphi_{\mathbf{i}}(x_{\varsigma,i_\varsigma})-\alpha-n, \varphi_{\mathbf{i}}(x_{\varsigma,i_\varsigma})-n-1]\}.
  \end{eqnarray*}
  As $\varphi_{\mathbf{i}} (x+h) = \varphi_{\mathbf{i}} (x) + 1$ for all $x\in A$ with $x \leq x_{\varsigma,i_\varsigma}$, we have $C_2 + (n+1)h \subset C_1$. We denote $C_3 := C_1 \setminus (C_2 + (n+1)h)$. Then
  \begin{eqnarray*}
   C_3 &=& \left\{ b+\delta h \in A;\, \begin{array}{l}
                                   b \in B, \delta \in \{0,\ldots, n \}, b \leq x_{\varsigma,i_\varsigma} + h - \delta h, \\ \varphi_{\mathbf{i}}(b) \in    [\varphi_{\mathbf{i}}(x_{\varsigma,i_\varsigma})-\delta-\alpha+1, \varphi_{\mathbf{i}} (x_{\varsigma,i_\varsigma}) - \delta ]
                                  \end{array}
  \right\} \\
   &=& \left\{ b+\delta h \in A;\, \begin{array}{l} b \in B \setminus \{x_{\varsigma,i_\varsigma}-nh\}, \delta \in \{0,\ldots, n\}, b \leq x_{\varsigma,i_\varsigma} + h - \delta h, \\ \varphi_{\mathbf{i}}(b) \in    [\varphi_{\mathbf{i}}(x_{\varsigma,i_\varsigma})-\delta-\alpha+1, \varphi_{\mathbf{i}} (x_{\varsigma,i_\varsigma}) - \delta ] \end{array} \right\} \\ & & \cup \bigg\{x_{\varsigma,i_\varsigma}-nh+\delta h;\, \delta = \max\{0,n+1-\alpha\}, \ldots, n+1 \bigg\} .
  \end{eqnarray*}
  In particular, we have $|C_3| \geq S_3 + \min \{\alpha,n+1\}$.

  Altogether, we get
  \begin{eqnarray*}
   \Delta &=& ( \sum_{k=0}^{\alpha-1} \sum_{l=0}^{n} |\{j; \mu^i_j = \varphi_{\mathbf{i}}(x_{\varsigma,i_\varsigma})-k-l\}|) - \min \{\alpha,n+1\} \\
   &=& \sum_{k=0}^{\alpha-1} \sum_{l=0}^n \big(\, | A_{(\varsigma)} \cap \varphi_{\mathbf{i}}^{-1} ( \{ \varphi_{\mathbf{i}}(x_{\varsigma,i_\varsigma})-k-l \}  )| - | A_{(\varsigma)} \cap \varphi_{\mathbf{i}}^{-1} ( \{ \varphi_{\mathbf{i}}(x_{\varsigma,i_\varsigma})-k-l-1\} ) | \,\big) \\ & &- \min \{ \alpha,n+1\} \\
   &=& | A_{(\varsigma)} \cap \varphi_{\mathbf{i}}^{-1} (\,[ \varphi_{\mathbf{i}}(x_{\varsigma,i_\varsigma}) - \alpha+1, \varphi_{\mathbf{i}}(x_{\varsigma,i_\varsigma})] \,) | - | A_{(\varsigma)} \cap \varphi_{\mathbf{i}}^{-1} (\, [\varphi_{\mathbf{i}}(x_{\varsigma,i_\varsigma})-\alpha-n, \varphi_{\mathbf{i}}(x_{\varsigma,i_\varsigma}) -n -1] \,) | \\ & & - \min \{ \alpha, n+1\} \\
   &=& (S_1 + |C_1|) - (S_2 + |C_2|) - \min \{ \alpha, n+1\}\\
   &=& S_1 -S_2 + |C_3| - \min \{ \alpha, n+1\} \\
   &\geq& S_1 - S_2 + S_3.
  \end{eqnarray*}

 \end{proof}

 \begin{proof}[Proof of Theorem \ref{thm main}]
  We reduced the claim of the theorem to $G = \Res_{k'/k} \GL_h$ and superbasic $b \in G(L)$ in Theorem \ref{thm reduction to superbasic}. In this case the  assertion is equivalent to $ \dim X_\mu (b) = \lengthG{\mu}{\nu}$ by Proposition \ref{prop superbasic}. As a consequence of Proposition \ref{prop decomposition} we get
  \[
   \dim X_\mu (b) = \max\{|\calV(A,\varphi)|;\, (A,\varphi) \textnormal{ is an extended EL-chart for } \mu \}.
  \]
  Now in Proposition \ref{prop upper bound V} we showed that the maximum is at least $\lengthG{\mu}{\nu}$ and in Theorem \ref{thm lower bound V} we showed that it is at most $\lengthG{\mu}{\nu}$, finishing the proof.
 \end{proof}

 \section{Irreducible components in the superbasic case}

 We now consider the $J_b(F)$-action on the irreducible components of $X_\mu(b)$ for superbasic $b$ and arbitrary $G$. Recall that the canonical projection $G \twoheadrightarrow G^{\ad}$ induces isomorphisms
 \[
  X_\mu(b)^\omega \cong X_{\mu_{\ad}}(b_{\ad})^{\omega_{\ad}}
 \]
 for all $X_\mu(b)^\omega \not= \emptyset$. As $J_b(F)$ acts transitively on the set of non-empty components $X_\mu(b)^\omega$ (cf.~\cite{CKV}~sect.3.3), this implies that the induced map on $J_b(F)$-orbits to $J_b^{\ad}(F)$-orbits of the respective irreducible components is bijective. Thus the set of irreducible components of $X_\mu(b)^\omega$ (resp.\ the set of $J_b(F)$-orbits of irreducible components of $X_\mu(b)$) only depends on the data $(G^\ad, \mu_\ad,b_\ad)$, so it suffices to consider the case $G = \Res_{k'/k} \GL_h$.

 \begin{lemma}
  Let $b \in G(L)$ be superbasic and $\omega \in \pi_1(G)$ such that $X_\mu(b)^\omega$ is non-empty. Then every $J_b(F)$-orbit of irreducible components of $X_\mu(b)$ contains a unique irreducible component of $X_\mu(b)^\omega$.
 \end{lemma}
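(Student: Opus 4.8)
The plan is to use the decomposition $X_\mu(b) = \coprod_{i\in\ZZ} X_\mu(b)^i$ into clopen pieces from Section~\ref{sect notation}, together with the element $\pi\in J_b(F)$ which induces isomorphisms $X_\mu(b)^i \stackrel{\sim}{\longrightarrow} X_\mu(b)^{i+1}$. First I would record a group-theoretic preliminary: for $g=(g_\tau)_\tau\in J_b(F)$, applying $\val\det$ componentwise to $g^{-1}b\sigma(g)=b$ (and using that $\sigma$ preserves $\val$ and merely permutes the factors) gives $\val\det g_\tau=\val\det g_{\tau-1}$ for all $\tau$; call this common value $n(g)$. Then $n\colon J_b(F)\to\ZZ$ is a homomorphism with $n(\pi)=1$, and each $g\in J_b(F)$ carries $X_\mu(b)^i$ isomorphically onto $X_\mu(b)^{i+n(g)}$, so $J_b(F)$ permutes the clopen pieces through the translation action of $\ZZ$ via $n$. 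Since $X_\mu(b)^\omega$ is of finite type over $\overline k$ — it is covered by the finitely many locally closed subschemes $\calS_{A,\varphi}$, each a subscheme of an affine space by Proposition~\ref{prop decomposition} — it has only finitely many irreducible components, which are permuted by any automorphism of $X_\mu(b)^\omega$. Existence is then immediate: an irreducible component $Z$ of $X_\mu(b)$ lies in a single $X_\mu(b)^j$, and $\pi^{\,i_0-j}Z$ (where $X_\mu(b)^\omega=X_\mu(b)^{i_0}$) is an irreducible component of $X_\mu(b)^\omega$ in the $J_b(F)$-orbit of $Z$.

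For uniqueness, suppose $Z_1,Z_2$ are irreducible components of $X_\mu(b)^\omega$ with $Z_2=gZ_1$ for some $g\in J_b(F)$. Since $g$ maps the clopen piece $X_\mu(b)^\omega$ containing $Z_1$ onto the clopen piece containing $Z_2$, namely $X_\mu(b)^\omega$ again, we get $n(g)=0$; that is, $g$ lies in $\mathrm{Stab}:=\ker(n)$, the stabilizer of $X_\mu(b)^\omega$ in $J_b(F)$. Because $b$ is superbasic its Newton point is central, so $J_b$ is an inner form of $G$ and is anisotropic modulo its centre; hence $\mathrm{Stab}$ is a \emph{compact} subgroup of $G(L)$. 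Its action on the finite set of irreducible components of $X_\mu(b)^\omega$ is continuous, so factors through a finite quotient, and the whole lemma comes down to showing that this quotient is trivial, i.e.\ that $\mathrm{Stab}$ fixes every irreducible component of $X_\mu(b)^\omega$.

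For this last point I would reduce further: it suffices to show, for each $g\in\mathrm{Stab}$ and each stratum $\calS_{A,\varphi}$ whose closure $Z=\overline{\calS_{A,\varphi}}$ is a component, that $g\calS_{A,\varphi}\cap\calS_{A,\varphi}\neq\emptyset$ — for then $\calS_{A,\varphi}$ and $g\calS_{A,\varphi}$ are dense open subsets of the components $Z$ and $gZ$ which meet, so $gZ=Z$. I see two natural routes. One is to exhibit in each such $\calS_{A,\varphi}$ a $G$-lattice fixed by all of $\mathrm{Stab}$: since $\mathrm{Stab}$ is (up to the centre) a maximal compact subgroup of the unit group of a division algebra acting on $N$, the $\mathrm{Stab}$-fixed $G$-lattices are exactly the lattices stable under the corresponding maximal order, which form an explicit periodic chain, and one would check via the explicit generators $v(a)$ of Proposition~\ref{prop decomposition} that each component meets this chain. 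The other, more conceptual, route is to observe that $\mathrm{Stab}$ is the maximal parahoric of the anisotropic-mod-centre group $J_b$, and that it therefore acts on the finite-type scheme $X_\mu(b)^\omega$ through the $\overline k$-points of a \emph{connected} algebraic group (a Greenberg-type reduction $\underline{\mathcal J}(\overline k[t]/t^N)$ of the parahoric group scheme $\underline{\mathcal J}$ of $J_b$), and a connected algebraic group cannot permute irreducible components nontrivially. I expect this to be the genuine obstacle: in the first route one must pin down the maximal-order-stable lattices inside the EL-chart stratification, and in the second one must make precise that the $J_b(F)$-action on $X_\mu(b)^\omega$ becomes algebraic after passing to a suitable Greenberg quotient.
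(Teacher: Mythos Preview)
Your reduction is exactly the paper's: one passes to $G=\Res_{k'/k}\GL_h$, notes that $J_b(F)$ permutes the pieces $X_\mu(b)^i$ transitively via $\pi$, and is left with showing that the stabiliser $J_b(F)^0=\mathrm{Stab}$ of $X_\mu(b)^\omega$ fixes every irreducible component. The divergence is in this last step, and the paper's argument is both simpler and more decisive than either of your proposed routes.

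The paper observes that any $g\in J_b(F)$ commutes with $b\sigma$, and since $b\sigma$ carries $e_{0,1}$ transitively (up to powers of $t$) through all $e_{\tau,i}$, the element $g$ is determined by $v_0:=g(e_{0,1})$; explicitly $g(e_{\tau,i})=t^{-j}(b\sigma)^k(v_0)$ whenever $e_{\tau,i}=t^{-j}(b\sigma)^k(e_{0,1})$. One reads off $\val\det g=(\calI(v_0)-1)_{\tau\in I}$, so $g\in J_b(F)^0$ iff $\calI(v_0)=1$, and in that case $\calI(g(e_{\tau,i}))=i_{(\tau)}$ for every $\tau,i$, hence $\calI\circ g=\calI$ on all of $N_{\hom}$. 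It follows at once that $A(gM)=A(M)$ and $\varphi(gM)=\varphi(M)$ for every $M$, so each stratum $\calS_{A,\varphi}$ is itself $J_b(F)^0$-stable, and therefore so is its closure. No fixed-point or connectedness argument is needed.

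Your Route~1 does not work as stated: the $\mathrm{Stab}$-fixed $G$-lattices (the $\mathcal O_D$-stable ones) form a single $\pi$-orbit, so there is at most one of any given volume; it need not lie in $X_\mu(b)$ for the given $\mu$, and even when it does it lies in exactly one stratum, whereas $X_\mu(b)^\omega$ typically has several top-dimensional strata. So you cannot produce a fixed point in \emph{each} $\calS_{A,\varphi}$ this way. Route~2 (factoring the action through a connected Greenberg quotient) is plausible and would give the result, but it requires setting up algebraicity of the $J_b(F)^0$-action on the finite-type piece, which is considerably heavier than the three-line computation above that already shows every stratum is preserved.
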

 \begin{proof}
  Denote by $J_b(F)^0$ the stabiliser of $X_\mu(b)^\omega$ in $J_b(F)$. Using the argument above, it suffices to show that $J_b(F)^0$ stabilises the irreducible components of $X_\mu(b)$ in the case $G = \Res_{k'/k} \GL_h$.

  For this we consider the action of $J_b(F)$ on $N_{\hom}$. Let $g \in J_b(F)$ and let $v_0 := g(e_{0,1}), c(g) := \calI(v_0)$. Now every element $e_{\tau,i}$ can be written in the form $e_{\tau,i} = \frac{1}{t^j}(b\sigma)^k(e_{0,1})$ for some integers $j,k$; then
  \[
   g(e_{\tau,i}) = \frac{1}{t^j}(b\sigma)^k(v_0).
  \]
  Hence $\val\det (g) = (c(g)-1)_{\tau\in I}$, in particular we have $g \in J_b(F)^0$ if and only if $c(g) = 1$. In this case the above formula implies $\calI(g(e_{\tau,i})) = i_{(\tau)}$ and thus $\calI(g(v)) = \calI(v)$ for all $v \in N_{\hom}$.

  We obtain $A(M) = A(g(M))$ and $\varphi(M) = \varphi(g(M))$ for all $M \in X_\mu(b)^0$. Thus $\calS_{A,\varphi}$ is $J_b(F)^0$-stable for every extended EL-chart $(A,\varphi)$ for $\mu$. As the $\calS_{A,\varphi}$ are irreducible, every irreducible component of $X_\mu(b)^0$ is of the form $\overline{\calS_{A,\varphi}}$ and thus $J_b(F)^0$-stable.
 \end{proof}

 We denote by $\calM_\mu$ the set of extended EL-Charts $(A,\varphi)$ for $\mu$ for which $|\calV(A,\varphi)|$ is maximal. As noted above we have a bijection
 \begin{eqnarray*}
  \calM_\mu & \longleftrightarrow & \{ \textnormal{top-dimensional irreducible components of } X_\mu(b) \} \\
  (A,\varphi) & \longmapsto & \overline{\calS_{(A,\varphi)}}.
 \end{eqnarray*}
 Rapoport conjectured in \cite{rapoport06} that $X_\mu(b)$ is equidimensional. If this holds true, the above bijection becomes
 \begin{eqnarray*}
  \calM_\mu & \longleftrightarrow & \{ \textnormal{irreducible components of } X_\mu(b) \} \\
            & \longleftrightarrow & \{ J_b(F)-\textnormal{orbits of irreducible components of } X_\mu(b)\}.
 \end{eqnarray*}

 It is known that $J_b(F)$ does not act transitively on the irreducible components of $X_\mu(b)$ in general, even in the case $G = \GL_h$ (\cite{viehmann06}~Ex.~6.2). The following lemma shows that we have transitive action of $J_b(F)$ only in a few degenerate cases.

 \begin{lemma}
  \begin{enumerate}
   \item Assume there exist $\tau_1,\tau_2 \in I$ such that $\mu_{\tau_1}$ and $\mu_{\tau_2}$ are not of the form $(a,a,\ldots,a)$ for some integer $a$. Then $\calM_\mu$ contains more than one element.
   \item On the contrary, if there exists $\tau \in I$ such that $\mu_\varsigma = (a_\varsigma, a_\varsigma,\ldots,a_\varsigma)$ for some integer $a_\varsigma$ for all $\varsigma \not= \tau$ then 
   \[
    |\calM_\mu| = |\calM_{\mu_\tau}|
   \]
  \end{enumerate}
  where $\calM_{\mu_\tau}$ denotes the set of extended EL-charts for $\mu_\tau$ (in particular $d=1$ for these EL-charts).
 \end{lemma}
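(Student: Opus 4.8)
By the discussion preceding the lemma, $\calM_\mu$ is the set of extended EL-charts $(A,\varphi)$ for $\mu$ with $|\calV(A,\varphi)|$ maximal, hence equal to $\lengthG{\nu}{\mu}=\dim X_\mu(b)$ (Proposition~\ref{prop decomposition}, Proposition~\ref{prop upper bound V}, Theorem~\ref{thm lower bound V}), so both parts are assertions about this finite set.

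\textbf{Part~(2).} Put $c=\sum_{\varsigma\neq\tau}a_\varsigma$. First, for any extended EL-chart $(A,\varphi)$ for $\mu$ the type $\mu'$ of $A$ is forced to be central on the components $\varsigma\neq\tau$: by Lemma~\ref{lem comparison type Hodge} we have $\mu'_{\dom}\preceq\mu$ componentwise, and a central coweight is minimal for $\preceq$ among dominant coweights of given degree, so $\mu'_{\varsigma\,\dom}=\mu_\varsigma$ and hence $\mu'_\varsigma=\mu_\varsigma$ for $\varsigma\neq\tau$. Consequently the transitions of $A$ around the cycle $I$ across these components are uniform translations (by $m_\varsigma-a_\varsigma h$), so $A$ is determined by $A_{(\tau)}$, and $A_{(\tau)}$ together with $f^d$, which acts on $\ZZ_{(\tau)}$ as $a\mapsto a+m$ with $m=\sum m_\varsigma$, is an EL-chart for the datum $(\ZZ,\ a\mapsto a+m,\ h)$. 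Secondly, the centrality of the Hodge point on each $A_{(\varsigma)}$, $\varsigma\neq\tau$, forces $|A_{(\varsigma)}\cap\varphi^{-1}(\{n\})|$ to be $0$ for $n<a_\varsigma$ and $h$ for $n\geq a_\varsigma$, and with conditions~(a)--(d) this forces $\varphi|_{A_{(\varsigma)}}$ to be the unique non-decreasing function with these fibres; in particular $\calV(A,\varphi)$ has no elements in $A_{(\varsigma)}\times A_{(\varsigma)}$, so $|\calV(A,\varphi)|=|\calV(A_{(\tau)},\varphi|_{A_{(\tau)}})|$. Thirdly, one checks that $(A_{(\tau)},\varphi|_{A_{(\tau)}})$ is an extended EL-chart for the coweight $\mu_\tau$ of $\GL_h$, up to the central twist by $c$, that $(A,\varphi)\mapsto(A_{(\tau)},\varphi|_{A_{(\tau)}})$ is a bijection onto such charts preserving $|\calV|$, and that $\calM_{\mu_\tau+c(1,\ldots,1)}=\calM_{\mu_\tau}$ since $X_{\mu_\tau+c(1,\ldots,1)}(b_\tau)\cong X_{\mu_\tau}(b_\tau)$; chaining these gives $|\calM_\mu|=|\calM_{\mu_\tau}|$.

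\textbf{Part~(1).} The plan is to exhibit two distinct extended EL-charts for $\mu$ with $|\calV|$ maximal. The first is the cyclic chart of type $\mu$, whose $|\calV|$ equals $\lengthG{\nu}{\mu}$ by Proposition~\ref{prop upper bound V} and Theorem~\ref{thm lower bound V}. For the second: by the classification of normalized EL-charts by their type, the types occurring are exactly the $\mu'\in\prod_\varsigma\ZZ^h$ of degree $m$ with $\sum_{i\leq k}\orbitsum{\mu'}_i\leq k\,m/h$ for $1\leq k<h$, and for the dominant type $\mu$ the slack $k\,m/h-\sum_{i\leq k}\orbitsum{\mu}_i$ is a sum of strictly positive contributions, one for each non-central component, and is non-integral since $(m,h)=1$. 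When $\mu_{\tau_1}$ and $\mu_{\tau_2}$ are both non-central, these two contributions provide enough room to modify $\mu$ on the non-central component with the smaller "jump" — e.g. by transposing a suitable adjacent pair of its entries — without violating the inequalities, yielding a valid type $\mu'\neq\mu$ with $\mu'_{\dom}=\mu$; a single non-central component need not leave such room. The corresponding cyclic extended EL-chart $(A',\varphi_0)$ is an extended EL-chart for $\mu$ distinct from the first, and one shows it is again maximal: in the notation of the cyclic case of the proof of Theorem~\ref{thm lower bound V}, $|\calV(A',\varphi_0)|=\lengthG{\widetilde{\mu}'}{\mu}+S_2$ with $\orbitsum{\nu}\preceq\orbitsum{\widetilde{\mu}'}\preceq\orbitsum{\mu}$ and $S_2\leq\lengthG{\nu}{\widetilde{\mu}'}$, and it remains to see that both are equalities for this $\mu'$.

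\textbf{Main obstacle.} For part~(2) the hard point is reconciling condition~(c) (the height bound $\varphi(a)\leq\height f(a)$, with equality on up-closed elements) and condition~(d) across the cyclic chain of central components with the single corresponding conditions in the $\GL_h$-picture: one must track how heights transform under the translations $A_{(\varsigma)}\to A_{(\varsigma+1)}$, verify that the composite bound around the cycle is exactly the $\GL_h$-bound, and in particular that $\varphi|_{A_{(\varsigma)}}$ really is forced. For part~(1) the hard point is the final equality $S_2=\lengthG{\nu}{\widetilde{\mu}'}$ — that passing to the non-dominant type $\mu'$ loses no dimension — which is where the hypothesis that \emph{two} components are non-central is genuinely used; choosing $\mu'$ as close to $\mu$ as possible keeps the comparison of the two charts local and should make this verification tractable.
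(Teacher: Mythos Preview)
Your treatment of part~(2) is essentially the paper's argument written out in full: a bijection between extended EL-charts for $\mu$ and extended EL-charts (equivalently, extended semi-modules) for $\mu_\tau$ which preserves $|\calV|$. The paper states this bijection in one line; your additional verifications (that $\mu'_\varsigma$ is forced central, that $\varphi|_{A_{(\varsigma)}}$ is determined, that these components contribute nothing to $\calV$) are correct and fill in what the paper leaves implicit. One minor imprecision: $\varphi$ is not literally ``non-decreasing'' on $A_{(\varsigma)}$; the correct argument is that condition~(b) together with the prescribed fibre sizes $|\varphi^{-1}(\{n\})|$ forces $\varphi(b+kh)=a_\varsigma+k$ for every $b\in B_{(\varsigma)}$, which is what you use afterwards anyway.

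For part~(1) your approach diverges from the paper's and leaves the gap you yourself flag. The paper does not modify $\mu$ by a transposition in one component; instead it produces a second element of $\calM_\mu$ by a \emph{symmetry} argument. The proof of Proposition~\ref{prop upper bound V} singles out component~$0$ only through the convention that $b_0$ is the minimum of $B_{(0)}$; running the identical argument with component~$\tau_2$ in the role of component~$0$ (equivalently, cyclically relabelling $I$) exhibits a second cyclic EL-chart whose $|\calV|$ is again bounded below by $\lengthG{\nu}{\mu}$, hence equal to it by Theorem~\ref{thm lower bound V}. Written back in the original indexing, this chart has type obtained from $\mu$ by cyclically shifting the entries of each $\mu_\varsigma$ with $0\le\varsigma<\tau_2$; it differs from the first chart precisely because $\mu_{\tau_1}$ is non-central for some $\tau_1<\tau_2$. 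The point is that maximality of $|\calV|$ for the second chart comes \emph{for free} from the symmetry --- there is nothing to verify.

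By contrast, your route requires showing $S_2=\lengthG{\nu}{\widetilde{\mu}'}$ for the perturbed type, and your heuristic about ``enough room'' does not obviously yield this (nor even the validity condition $\orbitsum{\mu'}\succeq\orbitsum{\nu}$ in general). These are genuine obstacles, not just bookkeeping; the paper's relabelling trick bypasses them entirely, and I would recommend replacing your argument for~(1) with it.
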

 \begin{proof}
  \emph{(1)} We assume without loss of generality that $\tau_1 \in [0,\tau_2)$. By Proposition~\ref{prop upper bound V} the cyclic EL-chart of type $\mu$ is contained in $\calM_\mu$. The same reasoning shows that $\calM_\mu$ also contains the cyclic EL-chart of type $(\mu_0',\ldots,\mu_{\tau_2-1}',\mu_{\tau_2},\ldots,\mu_{d-1})$ with $\mu'_\varsigma = (\mu_{\varsigma,h},\mu_{\varsigma,1},\ldots,\mu_{\varsigma,{h-1}})$. Note that our condition on $\mu_{\tau_1}$ implies $\mu_{\tau_1} \not= \mu'_{\tau_1}$ and thus $\mu \not= (\mu_0',\ldots,\mu_{\tau_2-1}',\mu_{\tau_2},\ldots,\mu_{d-1})$. \smallskip \\
  \emph{(2)} The claim holds as the bijection
  \begin{eqnarray*}
   \{ \textnormal{extended EL-charts for } \mu\} & \longleftrightarrow & \{ \textnormal{extended semi-modules for } \mu_\tau\} \\
   (A,\varphi) & \longmapsto & (A_\tau, \varphi_\tau) \\
   (\sqcup_{\tau\in I} B, \sqcup \varphi) & \longmapsfrom & (B,\varphi)
  \end{eqnarray*}
  preserves $|\calV|$. 
 \end{proof}

 We now consider the case $\mu$ minuscule. De Jong and Oort showed that in this case, $|\calM_\mu| = 1$ for (extended) semi-modules (i.e.\ the case $G = \GL_n$). For EL-charts we have the following conjecture, which generalises \cite{dJO00}~Rem.~6.16.
 
 \begin{conjecture}
  Let $\mu$ be minuscule. Then the construction of $\tilde\mu$ in the proof of Thm.~\ref{thm lower bound V} induces a bijection
  \[
   \{\textnormal{(extended) EL-charts for }\mu\} \rightarrow \{\tilde\mu \in W.\mu;\, \nu \preceq \bar{\tilde\mu}\}
  \]
  where $W = (S_n)^I$ denotes the absolute Weyl group of $G$. In particular
  \begin{eqnarray*}
   |\calM_\mu| &=& |\{\tilde\mu \in W.\mu;\, \nu \preceq \bar{\tilde\mu}, \lengthG{\nu}{\tilde\mu} = 0\}| \\
               &=& |\{\tilde\mu \in W.\mu;\, \orbitsum{\tilde\mu} = (\underbrace{\lfloor \frac{m}{h} \rfloor,\ldots,\lfloor \frac{m}{h} \rfloor}_{h\cdot(1-\{m/h\})},\underbrace{\lceil \frac{m}{h} \rceil, \ldots, \lceil \frac{m}{h} \rceil}_{h\cdot\{m/h\}})\}|,
  \end{eqnarray*}
  Here $m$ is defined as in section~\ref{sect notation}, i.e.\ $\nu = (\frac{m}{dh},\ldots,\frac{m}{dh})$.
 \end{conjecture}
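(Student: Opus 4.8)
The plan is to first strip away the combinatorial bookkeeping, reducing the conjecture to a rigidity statement about the $\tilde\mu$-construction, and then to attack that statement using minuscularity. Since $\mu$ is minuscule, every extended EL-chart for $\mu$ is cyclic by the first corollary following Lemma~\ref{lem comparison type Hodge}, so by Lemma~\ref{lem cyclic EL-chart} such a chart consists of an EL-chart $A$ together with its canonical function $\varphi_0$, and its Hodge point is $\mu'_{\dom}$ where $\mu'$ is the type of $A$. Combining this with the classification of EL-charts by their type, the set of extended EL-charts for $\mu$ is identified with
\[
 \calT := \{\mu' \in W.\mu;\, \orbitsum{\nu} \preceq \orbitsum{\mu'}\},
\]
which, unwinding the normalizations in the statement, is exactly the target set of the conjectured map. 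Moreover the proof of Theorem~\ref{thm lower bound V} shows that the element $\tilde\mu$ attached to a cyclic chart of type $\mu'$ satisfies $\tilde\mu_{\dom} = \mu$ and $\orbitsum{\nu} \preceq \orbitsum{\tilde\mu} \preceq \orbitsum{\mu}$, hence $\tilde\mu \in \calT$. Thus the map in question is a self-map $\Phi \colon \calT \to \calT$ of a finite set (Corollary~\ref{cor finitely many EL-charts}), and it suffices to prove that $\Phi$ is injective (equivalently, surjective).

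For injectivity I would reconstruct the EL-chart $A$, hence the type $\mu'$, from $\tilde\mu$. For the chart underlying $\mu'$, the tuple $(\tilde b_{\tau,i})_i$ is the set $B_{(\tau)} = A_{(\tau)} \setminus (A_{(\tau)}+h)$ listed in increasing order, normalized by $\sum_{b \in B_{(0)}} b = h(h-1)/2$, and $\tilde\mu_{\tau+1,i}$ is characterised by $\suc(\tilde b_{\tau,i}) = f(\tilde b_{\tau,i}) - \tilde\mu_{\tau+1,i} h \in B_{(\tau+1)}$. Hence the pair $\big((\tilde b_{\tau,i}),(\tilde\mu_{\tau,i})\big)$ satisfies, for every $\tau$, the closure relation that the multiset $\{ f(\tilde b_{\tau,i}) - \tilde\mu_{\tau+1,i} h;\, i \}$ equals $B_{(\tau+1)} = \{\tilde b_{\tau+1,i};\, i\}$. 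The plan is to show that, given $\tilde\mu$, this closure system in the unknowns $(\tilde b_{\tau,i})$ has a unique solution; minuscularity is the crucial input, since then every $\tilde\mu_{\tau,i} \in \{0,1\}$, so the map $a \mapsto f(a) - \tilde\mu_{\tau+1,i} h$ has only two possible step sizes and the combinatorics degenerates into a rigid ``shift-register'' pattern from which the $\tilde b$'s --- and then $A$ and $\mu'$ --- are forced uniquely.

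Granting that $\Phi$ is a bijection, the cardinality statements follow by combining it with the dimension bookkeeping of Section~\ref{sect combinatorics}. For a cyclic chart of type $\mu'$ the proof of Theorem~\ref{thm lower bound V} gives $|\calV(A,\varphi)| = \lengthG{\tilde\mu}{\mu} + S_2$ with $S_2 \le \lengthG{\nu}{\tilde\mu}$ (notation of the cyclic case there); since $\orbitsum{\tilde\mu} \in \ZZ^h$ one has, by Lemma~\ref{lem trivial facts}, the additivity $\lengthG{\nu}{\mu} = \lengthG{\nu}{\tilde\mu} + \lengthG{\tilde\mu}{\mu}$, so $|\calV| = \lengthG{\nu}{\mu}$ exactly when $S_2 = \lengthG{\nu}{\tilde\mu}$. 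In the minuscule case this forces $\lengthG{\nu}{\tilde\mu} = 0$, the nontrivial point being that $S_2 < \lengthG{\nu}{\tilde\mu}$ whenever $\lengthG{\nu}{\tilde\mu} > 0$, which has to be extracted from the polygon estimate $(\widehat\psi^i)$ built in the proof of Theorem~\ref{thm lower bound V}. The remaining equality is then the elementary observation that, for $\tilde\mu \in W.\mu$ with $\orbitsum{\nu} \preceq \orbitsum{\tilde\mu}$, the condition $\lengthG{\nu}{\tilde\mu} = 0$ pins $\orbitsum{\tilde\mu}$ down to the most balanced integral vector of coordinate sum $m$.

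The main obstacle is the rigidity claim of the second paragraph: that sorting the $b$'s within each component and reading off the types is a reversible operation on $\calT$. This is the analogue for $\Res_{k'/k}\GL_h$ of the uniqueness phenomenon for minuscule semi-modules of de Jong--Oort, and it amounts to a careful analysis of how the $d$ sorting permutations interact with the $f$-recursion around the whole cycle $I$; the secondary difficulty is the strict inequality $S_2 < \lengthG{\nu}{\tilde\mu}$ in the non-extremal minuscule case. These are presumably the reasons the statement is left as a conjecture.
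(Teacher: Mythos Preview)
The statement you are attempting to prove is labelled a \emph{Conjecture} in the paper, and the paper gives no proof of it; there is therefore nothing to compare your argument against. What you have written is a proof \emph{strategy}, and you yourself flag its two essential gaps at the end.

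Your reductions in the first paragraph are sound: minuscularity forces every extended EL-chart for $\mu$ to be cyclic, so the source set is indeed parametrised by $\calT=\{\mu'\in W.\mu;\,\orbitsum{\nu}\preceq\orbitsum{\mu'}\}$, and the proof of Theorem~\ref{thm lower bound V} shows $\tilde\mu\in\calT$ as well; hence the conjectured map is a self-map of a finite set and bijectivity reduces to injectivity. Your additivity step $\lengthG{\nu}{\mu}=\lengthG{\nu}{\tilde\mu}+\lengthG{\tilde\mu}{\mu}$ is also correct, since $\orbitsum{\tilde\mu}\in\ZZ^h$ makes the relevant floor terms integral.

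But the two points you isolate are exactly the content of the conjecture, not technicalities to be filled in. The ``rigidity'' claim---that one can recover the EL-chart $A$ from $\tilde\mu$---is the heart of the matter: the $\tilde\mu$-construction sorts each $B_{(\tau)}$ and records the types, and there is no a priori reason this forgets nothing, even in the minuscule case; the $d=1$ analogue is the de~Jong--Oort uniqueness, and already there the argument is delicate. Likewise the strict inequality $S_2<\lengthG{\nu}{\tilde\mu}$ whenever $\lengthG{\nu}{\tilde\mu}>0$ is not extractable from the polygon estimate as written: that estimate only gives $S_2\le\lengthG{\nu}{\tilde\mu}$, and equality is not visibly obstructed. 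So what you have is a correct reformulation of the conjecture together with an honest identification of why it is open, not a proof.
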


\providecommand{\bysame}{\leavevmode\hbox to3em{\hrulefill}\thinspace}

\end{document}